\numberwithin{equation}{section}
\def\Ext{\mbox{\rm Ext}\,} \def\Hom{\mbox{\rm Hom}} \def\dim{\mbox{\rm dim}\,} \def\Iso{\mbox{\rm Iso}\,}\def\Ind{\mbox{\rm Ind}\,}
\def\lr#1{\langle #1\rangle}    
\def\Ker{\mbox{\rm Ker}\,}   \def\im{\mbox{\rm Im}\,} \def\Coker{\mbox{\rm Coker}\,}
\def\End{\mbox{\rm End}\,}\def\tw{\mbox{\rm tw}\,}\def\id{\mbox{\rm id}\,}
\def\rad{\mbox{\rm rad}\,}\def\M{\mathcal{M}}
\def\Aut{\mbox{\rm Aut}\,}\def\Dim{\mbox{\rm \textbf{dim}}\,}\def\A{\mathcal{A}\,} \def\H{\mathcal{H}\,}
\def\P{\mathscr{P}\,}\def\X{\mathbb X}
\def\ZZ{\mathbb Z}
\theoremstyle{plain} %text of this environment is typesetted in italics
\newtheorem{theorem}{\bf Theorem}[section]
\newtheorem{lemma}[theorem]{\bf Lemma}
\newtheorem{corollary}[theorem]{\bf Corollary}
\newtheorem{proposition}[theorem]{\bf Proposition}
\theoremstyle{definition} %text of this environment is typesetted in roman letters
\newtheorem{definition}[theorem]{\bf Definition}
\newtheorem{remark}[theorem]{\bf Remark}
\newtheorem{example}[theorem]{\bf Example}
\newcommand{\bt}{\begin{theorem}}
\newcommand{\et}{\end{theorem}}
\newcommand{\bl}{\begin{lemma}}
\newcommand{\el}{\end{lemma}}
\newcommand{\bd}{\begin{definition}}
\newcommand{\ed}{\end{definition}}
\newcommand{\bc}{\begin{corollary}}
\newcommand{\ec}{\end{corollary}}
\newcommand{\bp}{\begin{proof}}
\newcommand{\ep}{\end{proof}}
\newcommand{\bx}{\begin{example}}
\newcommand{\ex}{\end{example}}
\newcommand{\br}{\begin{remark}}
\newcommand{\er}{\end{remark}}
\newcommand{\be}{\begin{equation}}
\newcommand{\ee}{\end{equation}}
\newcommand{\ba}{\begin{align}}
\newcommand{\ea}{\end{align}}
\newcommand{\bn}{\begin{enumerate}}
\newcommand{\en}{\end{enumerate}}
\newcommand{\bcs}{\begin{cases}}
\newcommand{\ecs}{\end{cases}}
\renewcommand{\section}{\@startsection{section}{1}{0mm}
  {-\baselineskip}{0.5\baselineskip}{\bf\leftline}}
\begin{document}

\title[Quantum cluster characters of Hall algebras revisited]{Quantum cluster characters \\ of Hall algebras revisited} %title of paper and the running head option

\author{Changjian Fu, Liangang Peng and Haicheng Zhang$^*$}
\address{Department of Mathematics\\ SiChuan University\\ Chengdu 610064, P.~R.~China}
\email{changjianfu@scu.edu.cn (C. Fu)}
\address{Department of Mathematics\\ SiChuan University\\ Chengdu 610064, P.~R.~China} \email{penglg@scu.edu.cn
(L. Peng)}
\address{Institute of Mathematics, School of Mathematical Sciences, Nanjing Normal University,
Nanjing 210023, P.~R.~China}
\email{zhanghc@njnu.edu.cn (H. Zhang)}

%%%%%%%%%%%%%%% footnote %%%%%%%%%%%%%%%%
\subjclass[2010]{17B37, 16G20, 17B20.}

\keywords{Hall algebra of morphisms; Comultiplication; Integration map; Quantum cluster algebra.}
\thanks{$*$~Corresponding author.}
%%%%%%%%%%%% Authors addresses %%%%%%%%%%%%%

%%%%%%%%%%%%%%%%%%%%%%%%%%%%%%%%%%%%%%%%%

\begin{abstract}
Let $Q$ be a finite acyclic valued quiver. We define a bialgebra structure and an integration map on the Hall algebra associated to the morphism category of projective representations of $Q$. As an application, we recover the surjective homomorphism defined in \cite{DXZ}, which realizes the principal coefficient quantum cluster algebra $\A_q(Q)$ as a sub-quotient of the Hall algebra of morphisms. Moreover, we also recover the quantum Caldero--Chapoton formula, as well as some multiplication formulas between quantum Caldero--Chapoton characters.
\end{abstract}

\maketitle

\section{Introduction}
The Hall algebra of a finitary abelian category is defined to be an associative algebra with a basis indexed by the isomorphism classes of objects and with a multiplication which encodes information about extensions of objects (cf. \cite{R90,Sc}). A typical example of such categories is provided by the category of finite dimensional representations of a finite acyclic quiver over a finite field. For a Dynkin quiver $Q$, Ringel~\cite{R90a} realized the positive part of the quantum group associated to $Q$ via the so-called Ringel--Hall algebra of $Q$. Later on, for any hereditary algebra $A$, Green \cite{Gr95} introduced a bialgebra structure on the Ringel--Hall algebra of $A$,
and he showed that the composition subalgebra generated by simple modules
provides a realization of the positive part of the corresponding quantum group.

Cluster algebras were introduced by Fomin and Zelevinsky~\cite{FZ} with the aim to set up a combinatorial framework for the study of total positivity  in algebraic groups and canonical bases in quantum groups. The quantum versions of cluster algebras, called the quantum cluster algebras, were later introduced by Berenstein and Zelevinsky~\cite{BZ05}.  Acyclic (quantum) cluster algebras associated with acyclic valued quivers are an important class of (quantum) cluster algebras, which has a close relation with the representation theory of acyclic valued quivers, see~\cite{BMRRT,CC,CK2005, CK2,Qin, Rupel1, Rupel2} for instance. Among others, the combinatorial structure of an acyclic cluster algebra $\mathcal{A}(Q)$ can be categorified by the combinatorial structure of the category of representations of $Q$~({cf.}~\cite{BMRRT}). Caldero and Chapoton~\cite{CC} proved that the cluster algebra $\mathcal{A}(Q)$ associated with a Dynkin quiver $Q$ can be recovered from the data of the corresponding quiver representation category. Caldero and Keller~\cite{CK2005} further proved the cluster multiplication formulas for $\mathcal{A}(Q)$.

Let $Q$ be a finite acyclic valued quiver and $\mathcal{A}$ be the category of finite dimensional representations of $Q$. The work of Caldero and Keller~\cite{CK2005} suggested that there should be a deep connection between the quantum cluster algebra $\mathcal{A}_q(Q)$ (with coefficients) and  the dual Hall algebra $\mathcal{H}(\mathcal{A})$. In~\cite{DX}, Ding and Xu showed that the comultiplication of $\mathcal{H}(\mathcal{A})$ implies the cluster multiplication formulas in $\mathcal{A}_q(Q)$~({cf.} also ~\cite{Fei}).
The expected connection has been established by Berenstein and Rupel~\cite{BR}. In particular, Berenstein and Rupel~\cite{BR} proved that there is a homomorphism of algebras from $\mathcal{H}(\mathcal{A})$ to the quantum torus  associated with the quantum cluster algebra $\mathcal{A}_q(Q)$, which maps the indecomposable rigid modules to the non-initial quantum cluster variables of $\mathcal{A}_q(Q)$. Such a homomorphism is called a quantum cluster character of the Hall algebra. The homomorphism and its generalization have been further investigated in~\cite{CDX,Fu, Rupel3}. Let us emphasize that the bialgebra structure of $\mathcal{H}(\mathcal{A})$ has also played a central role in~\cite{BR, CDX, Fei, Fu}.

In general, the initial quantum cluster variables do not belong to the image of a quantum cluster character of the Hall algebra $\mathcal{H}(\mathcal{A})$. In order to overcome this shortcoming, Ding, Xu and Zhang~\cite{DXZ} introduced the (localized) Hall algebra $\mathcal{MH}(\mathcal{A})$ of morphisms associated to the representation category of $Q$, and established a homomorphism of algebras from a twisted version $\mathcal{MH}_\Lambda(\mathcal{A})$ of $\mathcal{MH}(\mathcal{A})$ to the quantum torus  associated with the quantum cluster algebra $\mathcal{A}_q(Q)$ with principal coefficients. Along this way, the quantum cluster algebra $\mathcal{A}_q(Q)$ has been realized as a subquotient of the Hall algebra $\mathcal{MH}_\Lambda(\mathcal{A})$.

The aim of this paper is to pursue a bialgebra approach to understand the quantum cluster character introduced in~\cite{DXZ}.
To do this, we introduce a comultiplication $\Delta$ for $\mathcal{MH}(\mathcal{A})$ and prove that  $\mathcal{MH}(\mathcal{A})$ is a bialgebra without counit. We also introduce an integration map $\int$ from $\mathcal{MH}(\mathcal{A})$ to a suitable torus.
Our main result yields an explicit and simple factorization of the quantum cluster character $\Psi$ in~\cite{DXZ} via the comultiplication $\Delta$ and integration map $\int$. This gives an alternative way to recover the quantum cluster character $\Psi$. Compared with \cite{DXZ}, we obtain the quantum cluster character $\Psi$ without using cluster multiplication formulas, on the contrary, cluster multiplication formulas (cf. \cite{DX,Fei,DSC,DXZ}) are the outcome of our main result. Moreover, we also recover the quantum Caldero--Chapoton formula, and present its intrinsic interpretation via the Hall algebra of morphisms.
As an appendix, we also show how to apply a certain subalgebra of the derived Hall algebra of the bounded derived category $\mathcal {D}^b(\mathcal{A})$ to realize the whole acyclic quantum cluster algebras. By the way, the three-step homomorphism approach used in this paper has been further developed in \cite{CDZ} to deduce the quantum version of the cluster multiplication formulas in the classical cluster algebra proved by Caldero-Keller \cite{CK2005} for finite type, Hubery \cite{Hubery1} for affine type and Xiao-Xu \cite{XX} for acyclic quivers.

The paper is organized as follows: In Section $2$ we recollect the definitions and properties of the morphism category and the construction of the Hall algebra of morphisms.  We introduce a bialgebra structure for $\mathcal{MH}(\mathcal{A})$ in Section $3$ and an integration map in Section $4$. After certain computations involving compatible pairs in~ Section $5$, we prove the main result in Section $6$. As a byproduct, we give a categorical interpretation of $g$-vectors via the morphism category in Section $7$. In Section $8$, we generalize the idea of  Section $6$ to show that a certain subalgebra of the derived Hall algebra can also be used to realize acyclic quantum cluster algebras.

~~~~~~~~~~~~~~~~~~~~~~
Let us fix some notations used throughout the paper. For a finite set $S$, we denote by $|S|$ its cardinality. Let $k=\mathbb{F}_q$ be a finite field with $q$ elements, and set $v=\sqrt{q}$. Let $\ZZ[v,v^{-1}]$ be the ring of integral Laurent polynomials. Let $A$ be a finite dimensional hereditary $k$-algebra, and denote by $\A$ the category of finite dimensional left $A$-modules; let $\mathscr{P}=\mathscr{P}_{\A}\subset\A$ be the subcategory consisting of projective objects.  For an essentially small exact or triangulated category $\mathcal {E}$, the Grothendieck group of $\mathcal {E}$ and the set of isomorphism classes $[X]$ of objects in $\mathcal {E}$ are denoted by $K(\mathcal {E})$ and $\Iso(\mathcal {E})$, respectively; let $\Ind(\mathcal {E})$ be a complete set of indecomposable objects in $\mathcal {E}$. For each object $M$ in $\mathcal {E}$, its automorphism group and image in $K(\mathcal {E})$ are denoted by $\Aut(M)$ and $\hat{M}$, respectively; we set $a_M=|\Aut(M)|$ and denote by $aM$ the direct sum of $a$ copies of $M$ for a positive integer $a$. For a module $M\in\A$, we also use $\Dim M$  to denote its dimension vector. We always assume that all the vectors are column vectors, and all tensor products are taken over $\ZZ[v,v^{-1}]$.

\section{Preliminaries}

\subsection{Hall algebras}
Given objects $L,M,N \in \mathcal{A}$, let $\Ext_\mathcal{A}^1(M,N)_L \subset \Ext_\mathcal{A}^1(M,N)$ be the subset consisting of those equivalence classes of short exact sequences with middle term isomorphic to $L$.
\begin{definition}\label{Hall algebra of abelian category}
The \emph{Hall algebra} $\mathcal {H}(\mathcal{A})$ of $\mathcal{A}$ is the free $\ZZ[v,v^{-1}]$-module with basis elements $[M] \in \Iso(\mathcal{A})$, and with the multiplication defined by
\[[M] \diamond [N] = \sum\limits_{[L] \in \Iso(\mathcal{A})} {\frac{{|\Ext_\mathcal{A}^1{{(M,N)}_L}|}}{{|\Hom_\mathcal{A}(M,N)|}}} [L].\]
\end{definition}
\begin{remark}
Given objects $L,M,N\in \A$, set
$$F_{MN}^L:=|\{N'\subset L~|~N'\cong N, L/N'\cong M\}|.$$
By the Riedtmann--Peng formula \cite{Riedtmann,Peng},
$$F_{MN}^{L}=\frac{|\Ext^1_{\A}(M,N)_{L}|}{|\Hom_{\A}(M,N)|}\frac{a_{L}}{a_{M}a_{N}}.$$ Thus, in terms of the rescaled generators $[[M]]=\frac{[M]}{a_M}$, the product takes the form
$$[[M]]\diamond [[N]]= \sum\limits_{[L] \in {\rm Iso}(\mathcal{A})}F_{MN}^L[[L]],$$
which is the definition used, for example, in \cite{R90a,Sc}. The associativity of Hall algebras amounts to the following identity
\begin{equation}\label{jiehe}\sum\limits_{[M]}F_{XY}^MF_{MZ}^L=\sum\limits_{[N] }F_{XN}^LF_{YZ}^N=:F_{XYZ}^L,\end{equation} for any objects $L,X,Y,Z\in\A$.
\end{remark}

Given objects $M,N \in \mathcal{A}$, one defines \begin{equation}\label{Euler form}\lr{M,N}:=\dim_k\Hom_{\A}(M,N)-\dim_k\Ext^1_{\A}(M,N),\end{equation}
which descends to give a bilinear form
\begin{equation}\label{adeuler}\lr{\cdot ,\cdot }: K(\mathcal{A})\times K(\mathcal{A})\longrightarrow \mathbb{Z},\end{equation}
called the \emph{Euler form} of $\mathcal{A}$.
We also consider the \emph{symmetric Euler form}
$$(\cdot ,\cdot ): K(\mathcal{A})\times K(\mathcal{A})\longrightarrow \mathbb{Z}$$ defined by $(\alpha,\beta)=\lr{\alpha,\beta}+\lr{\beta,\alpha}$ for all $\alpha,\beta \in K(\mathcal{A})$.

The \emph{twisted Hall algebra} $\H_{\tw}(\A)$ is the same module as $\H(\A)$ but with the twisted multiplication defined by
$$[M]*[N]=q^{\lr{M,N}}[M]\diamond[N].$$

\subsection{Morphism categories}\label{morphism}
Let $C_2(\A)$ be the category whose objects are the morphisms $\xymatrix{M_{-1}\ar[r]^f&M_0}$ in $\A$, and each morphism from $\xymatrix{M_{-1}\ar[r]^f&M_0}$ to $\xymatrix{N_{-1}\ar[r]^g&N_0}$ is a pair $(u,v)$ of morphisms in $\A$ such that the following diagram
$$\xymatrix{M_{-1}\ar[r]^f\ar[d]_u&M_0\ar[d]^v\\
N_{-1}\ar[r]^g&N_0}$$ is commutative. In what follows, we also write $M_\bullet$ as a morphism $\xymatrix{M_{-1}\ar[r]^f&M_0.}$ Let $C_2(\P)\subset C_2(\A)$ be the extension-closed subcategory consisting of the morphisms in $\P$.

For each object $P\in\P$, define two objects in $C_2(\P)$
\begin{equation}K_P:=\xymatrix{P\ar[r]^1&P}~~\text{and}~~Z_P:=\xymatrix{P\ar[r]&0.}\end{equation}
For each object $M\in\A$, fixing a minimal projective resolution\footnote{The notations $P_M$ and $\Omega_M$ will be used throughout the paper.} \begin{equation}\label{mpr}
\xymatrix{0\ar[r]&\Omega_M\ar[r]^{\delta_M}&P_M\ar[r]&M\ar[r]&0,}
\end{equation} by \cite{DXZ} we define an object in $C_2(\P)$
\begin{equation}\label{cm}
C_M:=\xymatrix{\Omega_M\ar[r]^{\delta_M}&P_M.}
\end{equation}
The indecomposable objects of $C_2(\P)$ are characterized in the following

\begin{lemma}\cite[Proposition 2.4]{DXZ}\label{fenjie}
Each object $M_\bullet$ in $C_2(\P)$ has a direct sum decomposition
$$M_\bullet=K_P\oplus Z_Q\oplus C_M$$
for some $P, Q\in\P$ and $M\in\A$. Moreover, the objects $P, Q$ and $M$ are uniquely determined up to isomorphism.
\end{lemma}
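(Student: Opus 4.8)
The plan is to build the decomposition in two stages: first split off the ``kernel'' summand $Z_Q$, then analyse the remaining monomorphism through the minimal projective resolution of its cokernel. Uniqueness will then follow from Krull--Schmidt together with elementary bookkeeping of kernels and cokernels. Throughout I use that $A$ is hereditary, so that every submodule of a projective is again projective.

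Write $M_\bullet$ as $f\colon M_{-1}\to M_0$ with $M_{-1},M_0\in\P$. Since $A$ is hereditary, $\im f\subseteq M_0$ is projective, so the short exact sequence $0\to\Ker f\to M_{-1}\xrightarrow{f}\im f\to 0$ splits. A choice of splitting gives $M_{-1}=\Ker f\oplus M'_{-1}$ with $f|_{\Ker f}=0$ and $f':=f|_{M'_{-1}}\colon M'_{-1}\to M_0$ a monomorphism. Block-diagonality of $f$ then yields $M_\bullet\cong Z_{\Ker f}\oplus(f'\colon M'_{-1}\to M_0)$ in $C_2(\P)$, and I set $Q:=\Ker f$.

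The heart of the argument is to identify the monomorphism $f'$, with cokernel $M:=\Coker f'=\Coker f$, up to isomorphism in $C_2(\P)$ with $\delta_M\oplus\id_P$ for a suitable projective $P$. Let $\pi\colon P_M\to M$ be the projective cover with kernel $\Omega_M$, and let $p\colon M_0\to M$ be the cokernel projection, so that $\Ker p=\im f'$. Lifting $p$ through $\pi$ and $\pi$ through $p$ (using projectivity of $M_0$ and $P_M$) produces $g\colon M_0\to P_M$ with $\pi g=p$ and $h\colon P_M\to M_0$ with $ph=\pi$; then $\pi(gh)=\pi$, and the defining superfluousness of $\Ker\pi$ forces $gh$ to be an automorphism of $P_M$. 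Hence $g$ is a split epimorphism and $M_0\cong P_M\oplus P$ with $P:=\Ker g$ projective and $g$ the projection. Since $p=\pi g$, one computes $\im f'=\Ker p=g^{-1}(\Omega_M)=\Omega_M\oplus P$, with inclusion into $M_0$ equal to $\delta_M\oplus\id_P$. As $f'$ is a monomorphism onto this image, I conclude $(f'\colon M'_{-1}\to M_0)\cong C_M\oplus K_P$, and therefore $M_\bullet\cong K_P\oplus Z_Q\oplus C_M$.

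For uniqueness I extract invariants of $M_\bullet$ from any such decomposition. Applying $\Ker(-)$ and $\Coker(-)$ termwise, and using that $\delta_M$ is a monomorphism with cokernel $M$ while $\id$ and $Q\to 0$ contribute nothing, gives $\Ker f\cong Q$ and $\Coker f\cong M$; thus $Q$ and $M$ are determined. With $M$ fixed, its minimal projective resolution fixes $\Omega_M$ and $P_M$, and then $M_0\cong P\oplus P_M$ pins down $P$ by cancellation in the Krull--Schmidt category of finite dimensional $A$-modules. I expect the only genuinely delicate point to be the projective-cover lifting step, where superfluousness of $\Ker\pi$ promotes $gh$ to an automorphism and thereby exhibits $P_M$ as a canonical summand of $M_0$; the hereditary hypothesis enters precisely to make $\im f$ projective so that the first stage splits.
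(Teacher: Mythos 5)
Your proof is correct: splitting off $Z_{\Ker f}$ by using heredity to make $\im f$ projective, identifying the remaining monomorphism with $C_M\oplus K_P$ via the projective-cover lifting argument, and deducing uniqueness from kernel/cokernel invariants plus Krull--Schmidt cancellation is exactly the standard argument. The paper itself gives no proof---it cites \cite{DXZ} for this statement---and your argument is essentially the one found in that source, so there is nothing substantive to compare.
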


\begin{lemma}\cite[Corollaries 3.1 and 3.2]{Bau}\label{proinj}
The objects $C_P$ and $K_P$, where $P\in \Ind(\P)$, provide a complete set of indecomposable projective objects in $C_2(\mathscr{P})$; and the objects $Z_P$ and $K_P$ provide a complete set of indecomposable injective objects. Moreover, all $K_P$ are exactly all the indecomposable projective-injective objects.
\end{lemma}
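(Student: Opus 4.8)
The plan is to work inside the exact structure that $C_2(\P)$ inherits as an extension-closed subcategory of the abelian category $C_2(\A)$, and to identify the relative projectives and injectives by an adjunction argument combined with the classification of indecomposables in Lemma \ref{fenjie}. The first step is to pin down the conflations: given a short exact sequence $0\to L_\bullet\to M_\bullet\to N_\bullet\to 0$ in $C_2(\A)$ with all three terms in $C_2(\P)$, each component sequence $0\to L_i\to M_i\to N_i\to 0$ ($i=-1,0$) has projective cokernel $N_i$ and therefore splits. Hence the conflations of $C_2(\P)$ are exactly the componentwise split short exact sequences, and the two component functors $\Sigma_{-1},\Sigma_0\colon C_2(\P)\to\P$ (sending $M_\bullet$ to $M_{-1}$, resp. $M_0$) are exact. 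I also record that $C_2(\P)$ is a Krull--Schmidt category, so that identifying the indecomposable projectives and injectives suffices.

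Next I would exhibit the induction functors $K_\bullet, C_\bullet, Z_\bullet\colon\P\to C_2(\P)$ and read off their adjunctions directly from the definition of morphisms in $C_2(\P)$: one checks that $K_\bullet$ is left adjoint to $\Sigma_{-1}$ and right adjoint to $\Sigma_0$, that $C_\bullet$ is left adjoint to $\Sigma_0$, and that $Z_\bullet$ is right adjoint to $\Sigma_{-1}$. For instance a morphism $K_P\to M_\bullet$ is determined by its degree-$(-1)$ component, giving $\Hom_{C_2(\P)}(K_P,M_\bullet)\cong\Hom_\A(P,M_{-1})$. Since $\P$ carries the split exact structure (every object is both projective and injective) and $\Sigma_{-1},\Sigma_0$ are exact, a left adjoint of an exact functor preserves projectives while a right adjoint preserves injectives. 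This immediately yields that $K_P$ and $C_P$ are projective and that $K_P$ and $Z_P$ are injective in $C_2(\P)$; moreover they are indecomposable whenever $P$ is, because the same adjunctions give $\End(K_P)\cong\End(C_P)\cong\End(Z_P)\cong\End(P)$, which is local.

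For completeness I would show that $C_2(\P)$ has enough relative projectives and injectives built from these objects, by writing down the standard conflations
$$0\to C_{M_{-1}}\to K_{M_{-1}}\oplus C_{M_0}\to M_\bullet\to 0 \quad\text{and}\quad 0\to M_\bullet\to Z_{M_{-1}}\oplus K_{M_0}\to Z_{M_0}\to 0$$
for an arbitrary $M_\bullet=(M_{-1}\xrightarrow{d}M_0)$; one verifies each is componentwise split, hence a genuine conflation. If $M_\bullet$ is projective the first deflation splits, so $M_\bullet$ is a direct summand of the projective object $K_{M_{-1}}\oplus C_{M_0}$; decomposing $M_{-1}$ and $M_0$ into indecomposables and applying Krull--Schmidt shows that every indecomposable projective is some $K_P$ or $C_P$. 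The injective case is dual via the second conflation. Finally, comparing the two lists and noting that $K_P$, $C_P$, $Z_P$ are pairwise non-isomorphic for $P\neq 0$ (they differ in which component vanishes, resp.\ whether $d$ is invertible) identifies the indecomposable projective-injectives as precisely the $K_P$.

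The routine parts are the adjunction verifications and the splitness of the two displayed conflations. The point requiring genuine care is the relative homological algebra: projectivity and injectivity are taken with respect to the componentwise-split exact structure of $C_2(\P)$, not the abelian structure of $C_2(\A)$; in particular $C_P=(0\to P)$ is projective here while $Z_P=(P\to 0)$ is injective, a distinction invisible in $C_2(\A)$. An alternative, more computational route is to derive the homotopy formula $\Ext^1_{C_2(\P)}(X_\bullet,Y_\bullet)\cong\Hom_\A(X_{-1},Y_0)/\{s_0 d_X-d_Y s_{-1}\}$ for componentwise-split extensions and read off vanishing and non-vanishing directly; there the delicate step is showing that $C_M$ fails to be projective for non-projective $M$, which hinges on $\delta_M$ not being a split monomorphism by minimality of the resolution \eqref{mpr}.
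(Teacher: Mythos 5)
Your proposal is correct, but note that the paper does not actually prove this lemma at all: it is quoted verbatim from Bautista \cite[Corollaries 3.1 and 3.2]{Bau}, and the neighbouring Lemma \ref{resol} (your two ``standard conflations'') is likewise imported from Bautista's Proposition 3.2. So what you have written is a self-contained replacement for the citation rather than a variant of an argument in the text. Your route is also organized differently from Bautista's: he establishes the explicit resolutions first and then reads off the classification of projectives and injectives by direct $\Hom$/$\Ext$ computations, whereas you first identify the inherited exact structure on $C_2(\P)$ as the componentwise split one, then get projectivity of $K_P$, $C_P$ and injectivity of $K_P$, $Z_P$ for free from the adjunctions $C_\bullet\dashv\Sigma_0\dashv K_\bullet\dashv\Sigma_{-1}\dashv Z_\bullet$ against the exact component functors (every object of $\P$ being projective and injective in its split structure), and only then invoke the two conflations plus Krull--Schmidt for completeness of the lists and for the identification of the projective-injectives as the $K_P$. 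All the individual steps check out: the conflations of $C_2(\P)$ are indeed the componentwise split exact sequences, the adjunction isomorphisms ($\Hom_{C_2(\P)}(K_P,M_\bullet)\cong\Hom_\A(P,M_{-1})$, etc.) are immediate from the commutativity condition on squares, they restrict to ring isomorphisms on endomorphism algebras (giving indecomposability), and splitting the deflation $C_{M_0}\oplus K_{M_{-1}}\twoheadrightarrow M_\bullet$ (resp. the inflation into $Z_{M_{-1}}\oplus K_{M_0}$) bounds the indecomposable projectives (resp. injectives) by the stated lists. What your approach buys is conceptual economy -- projectivity and injectivity drop out of standard adjunction yoga instead of case-by-case lifting arguments -- at the mild cost of having to set up the relative exact structure carefully, which you do.
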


For each $M_\bullet\in C_2(\P)$, we have a projective resolution and an injective resolution of $M_\bullet$ in the following
\begin{lemma}\cite[Proposition 3.2]{Bau}\label{resol}
For each $M_\bullet\in C_2(\P)$, we have the following short exact sequences
\begin{flalign}
&0\longrightarrow C_{M_{-1}}\longrightarrow C_{M_0}\oplus K_{M_{-1}}\longrightarrow M_{\bullet}\longrightarrow0;\label{tfj}\\
&0\longrightarrow M_{\bullet}\longrightarrow Z_{M_{-1}}\oplus K_{M_0}\longrightarrow Z_{M_0}\longrightarrow0.
\end{flalign}
\end{lemma}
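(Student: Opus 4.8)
The plan is to construct the two short exact sequences explicitly by exhibiting the morphisms at the level of the two-term complexes, and then to verify by Lemma~\ref{fenjie} (the decomposition of objects in $C_2(\P)$) that the displayed objects really are objects of $C_2(\P)$ and that the maps are well-defined chain maps. Write $M_\bullet$ as $\xymatrix{M_{-1}\ar[r]^f&M_0}$. For the projective resolution~\eqref{tfj}, recall $C_{M_{-1}}=(\xymatrix{\Omega_{M_{-1}}\ar[r]^{\delta_{M_{-1}}}&P_{M_{-1}}})$, and $C_{M_0}\oplus K_{M_{-1}}=(\xymatrix{\Omega_{M_0}\oplus M_{-1}\ar[r]&P_{M_0}\oplus M_{-1}})$. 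First I would write down the morphism $C_{M_{-1}}\to C_{M_0}\oplus K_{M_{-1}}$ using the functoriality of minimal projective resolutions: the map $f\colon M_{-1}\to M_0$ lifts to a chain map $(\Omega_f,P_f)\colon C_{M_{-1}}\to C_{M_0}$ between the resolutions, and one pairs it with the natural inclusion $M_{-1}\to K_{M_{-1}}$, arranging signs so that the composite into $M_\bullet$ vanishes. The cokernel map $C_{M_0}\oplus K_{M_{-1}}\to M_\bullet$ is then given in degree $-1$ by the composite $P_{M_0}\oplus M_{-1}\to M_0$ (the projection $P_{M_0}\to M_0$ plus $f$ on the second summand) and in degree $0$ by the projection onto $M_{-1}$, suitably placed to make it a chain map.

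Next I would check exactness componentwise. Since $C_2(\P)$ is an extension-closed exact subcategory of $C_2(\A)$, exactness of~\eqref{tfj} can be tested degreewise in $\A$, i.e.\ one verifies that in each of the two degrees $-1,0$ the resulting sequence of $A$-modules is short exact. In degree $-1$ the sequence reads $0\to\Omega_{M_{-1}}\to\Omega_{M_0}\oplus M_{-1}\to M_{-1}\to 0$ and in degree $0$ it reads $0\to P_{M_{-1}}\to P_{M_0}\oplus M_{-1}\to M_0\to 0$; each should split or be seen exact from the defining projective resolutions of $M_{-1}$ and $M_0$ together with the lifted chain map. The dual statement~\eqref{tfj}'s injective resolution is handled symmetrically: here $Z_{M_{-1}}\oplus K_{M_0}=(\xymatrix{M_{-1}\oplus M_0\ar[r]&0\oplus M_0})$ and $Z_{M_0}=(\xymatrix{M_0\ar[r]&0})$, and one builds $M_\bullet\to Z_{M_{-1}}\oplus K_{M_0}$ from $(\id_{M_{-1}},f)$ in degree $-1$ together with $\id_{M_0}$ in degree $0$, with cokernel map collapsing onto $Z_{M_0}$. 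By Lemma~\ref{proinj} the outer terms are genuinely projective (resp.\ injective) in $C_2(\P)$, so these sequences do compute projective and injective resolutions.

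The main obstacle I expect is bookkeeping of the chain-map data rather than any conceptual difficulty: one must make consistent choices of signs and of the lift $(\Omega_f,P_f)$ so that all squares commute and the composites of consecutive maps vanish, and one must confirm that the middle terms are honestly of the form $C_{(-)}\oplus K_{(-)}\oplus Z_{(-)}$ as required to lie in $C_2(\P)$ and to be (co)resolutions by (co)projectives. Once the maps are pinned down, exactness is a routine degreewise check in $\A$ using the given minimal projective resolutions, so the substance of the argument is entirely in the explicit construction of~\eqref{tfj} and its injective analogue.
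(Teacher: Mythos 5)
The paper offers no proof of this lemma at all --- it is quoted from \cite[Proposition 3.2]{Bau} --- so an explicit construction of the kind you outline is exactly what is called for, and the maps you write down are in substance the right ones. There is, however, one genuine soft spot, and it is precisely the point where the hypothesis $M_\bullet\in C_2(\P)$ (rather than $C_2(\A)$) enters: both $M_{-1}$ and $M_0$ are projective modules, so the minimal projective resolution \eqref{mpr} of each is trivial, giving $\Omega_{M_{-1}}=0=\Omega_{M_0}$, $P_{M_{-1}}=M_{-1}$, $P_{M_0}=M_0$, and hence $C_{M_{-1}}=(0\to M_{-1})$, $C_{M_0}=(0\to M_0)$. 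You never invoke this, and without it your degreewise exactness claim is not merely unproved but false: in any short exact sequence $0\to\Omega_{M_{-1}}\to\Omega_{M_0}\oplus M_{-1}\to M_{-1}\to 0$ a dimension count forces $\dim_k\Omega_{M_{-1}}=\dim_k\Omega_{M_0}$, which fails for general modules (take $M_{-1}$ projective and $M_0$ not); so exactness cannot be ``seen from the defining projective resolutions together with the lifted chain map''. Once projectivity is noted, all the machinery you set up --- functoriality of minimal resolutions (which, strictly speaking, holds only up to homotopy), the lift $(\Omega_f,P_f)$, the sign conventions --- evaporates, and \eqref{tfj} becomes the completely explicit sequence
\begin{equation*}
0\longrightarrow(0\to M_{-1})\longrightarrow(M_{-1}\to M_0\oplus M_{-1})\longrightarrow(M_{-1}\to M_0)\longrightarrow 0,
\end{equation*}
where the middle object has structure map $x\mapsto(0,x)$, the left map is $x\mapsto(-f(x),x)$ in degree $0$, and the right map is the identity in degree $-1$ and $(y,x)\mapsto y+f(x)$ in degree $0$; both rows are split short exact sequences in $\A$ and all squares commute. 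Your treatment of the injective coresolution is correct as it stands (it involves no $\Omega$'s at all), and your appeal to Lemma \ref{proinj} correctly identifies the outer terms as projective, resp.\ injective, objects of $C_2(\P)$.

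A smaller bookkeeping error: in your cokernel map $C_{M_0}\oplus K_{M_{-1}}\to M_\bullet$ the two components are assigned to the wrong degrees. The map ``projection $P_{M_0}\to M_0$ plus $f$'' goes between the degree-$0$ objects, and the projection onto $M_{-1}$ between the degree-$(-1)$ objects, not the other way around. Your hedge ``suitably placed to make it a chain map'' papers over this, but after the projectivity observation there is nothing left to place: the maps displayed above are forced, and the sign and lifting issues you anticipate as the main obstacle never arise.
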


By Lemma \ref{resol}, we know that the global dimension of $C_2(\P)$ is equal to one. Thus, for any objects $M_\bullet,N_\bullet\in C_2(\P)$, we define
\begin{equation}
\lr{M_\bullet,N_\bullet}:=\dim_k\Hom_{C_2(\P)}(M_\bullet,N_\bullet)-\dim_k\Ext_{C_2(\P)}^1(M_\bullet,N_\bullet).
\end{equation}
It induces a bilinear form on the Grothendieck group of $C_2(\P)$. We use the same notation as the Euler form of $\A$, since this should not cause confusion by the context.
By \cite[Lemma 2.9]{DXZ}, for any $M_\bullet,N_\bullet\in C_2(\P)$,
\begin{equation*}\lr{M_\bullet,N_\bullet}=\dim_k\Hom_{\A}(M_0,N_0)+\dim_k\Hom_{\A}(M_{-1},N_{-1})-\dim_k\Hom_{\A}(M_{-1},N_0).\end{equation*}

\subsection{Hall algebras of morphisms}
Let $\H(C_2(\A))$ be the Hall algebra of $C_2(\A)$ as defined in Definition \ref{Hall algebra of abelian category}. Let $\H(C_2(\P))$ be the submodule of $\H(C_2(\A))$ spanned by the isomorphism classes of objects in $C_2(\P)$. Since $C_2(\P)$ is closed under extensions, $\H(C_2(\P))$ is a subalgebra of the Hall algebra $\H(C_2(\A))$.
Define $\H_{\tw}(C_2(\P))$ to be the same module as $\H(C_2(\P))$, but with the twisted multiplication
$$[M_\bullet]\ast[N_\bullet]=q^{\lr{M_\bullet,N_\bullet}}[M_\bullet]\diamond[N_\bullet].$$
For any $M_\bullet\in C_2(\P)$ and $P\in\P$,
by \cite[Lemma 3.1]{DXZ}, we have that in $\H_{\tw}(C_2(\P))$
\begin{equation}
[K_P]\ast[M_\bullet]=[M_\bullet]\ast[K_P]=[K_P\oplus M_\bullet].\end{equation}
Define the {\em localized Hall algebra} $\M\H(\A)$ to be the localization of $\H_{\tw}(C_2(\P))$ with respect to all elements $[K_P]$. In symbols,
$$\M\H(\A):=\H_{\tw}(C_2(\P))[[K_P]^{-1}: P\in\P].$$

For each $\alpha\in K(\A)$, by writing $\alpha=\hat{P}-\hat{Q}$ for some objects $P,Q\in\P$, we define
$$K_{\alpha}:=[K_P]\ast[K_Q]^{-1}.$$ Then for any $\alpha,\beta\in K(\A)$ and $M_\bullet\in C_2(\P)$, \begin{flalign}&K_\alpha\ast K_\beta=K_{\alpha+\beta}=K_\beta\ast K_\alpha;\label{KK}\\
&K_\alpha\ast[M_\bullet]=[M_\bullet]\ast K_\alpha.\label{KX}\end{flalign}
For each $M\in\A$, we define
$$\X_M:=K_{-\hat{P}_M}\ast[C_M]\in\M\H(\A).$$
By \cite{DXZ}, $\X_M$ does not depend on the minimality of projective resolutions of $M$.
Given $M\in\A$ and $P\in\P$, we define
\begin{flalign*}\X_{M\oplus P[1]}:=\X_M\ast[Z_P]=K_{-\hat{P}_M}\ast[C_M]\ast[Z_P]=K_{-\hat{P}_M}\ast[C_M\oplus Z_P].
\end{flalign*}
In particular, \begin{equation}\label{XMP}\X_{P[1]}=[Z_P]\quad\text{and}\quad\X_{M\oplus P[1]}=\X_M\ast \X_{P[1]}.\end{equation}

Given objects $B, M, P,Q$ in $\A$, define
$${}_Q\Hom_{\A}(P,M)_B:=\{f:P\to M~|~\Ker(f)\cong Q~~\text{and}~~\Coker(f)\cong B\}.$$ By \cite{Vanden,ZHC},
\begin{equation}\label{xjs}|{}_Q\Hom_{\A}(P,M)_B|=\sum\limits_{[L]}a_LF^P_{LQ}F^M_{BL}.\end{equation}
\begin{theorem}\cite[Theorem 5.2]{DXZ}\label{ydygx}
The algebra $\mathcal{MH}(\A)$ is spanned by all $K_{\alpha}\ast\mathbb{X}_{M\oplus P[1]}$ $($with $\alpha\in K(\A)$, $M\in\A$ and $P\in\P$$)$,
which are subject to the following relations
\begin{flalign}
&K_{\alpha}\ast K_{\beta}=K_{\alpha+\beta}=K_{\beta}\ast K_{\alpha}\label{h1};\\
&K_{\alpha}\ast\mathbb{X}_{M\oplus P[1]}
    =\mathbb{X}_{M\oplus P[1]}\ast K_{\alpha}\label{h2};\\
&\mathbb{X}_{P[1]}\ast \mathbb{X}_{Q[1]}=
\mathbb{X}_{(P\oplus Q)[1]}
=\mathbb{X}_{Q[1]}\ast\mathbb{X}_{P[1]}\label{h3};\\
&\mathbb{X}_{M}\ast\mathbb{X}_{N}=q^{\lr{M,N}}\sum_{[L]}\frac{|\mathrm{Ext}_{\A}^{1}(M,N)_{L}|}{|\mathrm{Hom}_{\A}(M,N)|}\mathbb{X}_L\label{h4};\\
&\mathbb{X}_{M}\ast\mathbb{X}_{P[1]}=\mathbb{X}_{M\oplus
P[1]}\label{h5};\\
&\mathbb{X}_{P[1]}\ast\mathbb{X}_{M}=q^{-\lr{P,M}}
\sum\limits_{[B],[Q]}|{}_Q\Hom_{\A}(P,M)_B|\mathbb{X}_{B\oplus Q[1]}\label{h6};\end{flalign}
for any $\alpha,\beta\in K(\A)$, $M,N\in\A$ and $P,Q\in\P$.
\end{theorem}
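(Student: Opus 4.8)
The plan is to establish the six families of relations by unwinding the definitions in $\M\H(\A)$ and reducing each to a computation in the twisted Hall algebra $\H_{\tw}(C_2(\P))$, then translating back via the defining formula $\X_M=K_{-\hat P_M}\ast[C_M]$. First I would note that relations~\eqref{h1} and \eqref{h2} are immediate: \eqref{h1} is exactly \eqref{KK}, and \eqref{h2} is \eqref{KX}, both of which are quoted as already established. Similarly \eqref{h3} and \eqref{h5} are restatements of \eqref{XMP} together with the observation that $[Z_P]\ast[Z_Q]=[Z_{P\oplus Q}]$, which follows because $Z_P$ and $Z_Q$ have no extensions between them inside $C_2(\P)$ (one checks $\Ext^1(Z_P,Z_Q)=0$ using Lemma~\ref{proinj}, since the $Z_P$ are injective); hence only the split extension contributes and the twisting exponent $\lr{Z_P,Z_Q}$ vanishes. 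So the genuine content lies in \eqref{h4} and \eqref{h6}.

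For \eqref{h4}, I would compute $\X_M\ast\X_N=K_{-\hat P_M}\ast[C_M]\ast K_{-\hat P_N}\ast[C_N]$, move the $K$-factors past $[C_M]$ using \eqref{KX} to collect $K_{-\hat P_M-\hat P_N}$, and then expand $[C_M]\ast[C_N]$ in $\H_{\tw}(C_2(\P))$. The key point is that, by Lemma~\ref{fenjie}, every object appearing in the product $[C_M]\diamond[C_N]$ decomposes uniquely as $K_P\oplus Z_Q\oplus C_L$, and I would argue that for an extension of $C_N$ by $C_M$ no $Z_Q$-summand can appear (the $C_\bullet$ objects have no injective summands of type $Z$), while the $K_P$-summand is absorbed by the localization at $[K_P]^{-1}$ together with the $K_{-\hat P_M-\hat P_N}$ prefactor, so that the structure constants reduce exactly to the abelian Hall numbers $|\Ext^1_\A(M,N)_L|/|\Hom_\A(M,N)|$. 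Tracking the twist: the exponent $\lr{C_M,C_N}$ from the morphism-category Euler form must combine with the shift in $K$-degrees to produce precisely $q^{\lr{M,N}}$, which I would verify using the closed formula for $\lr{M_\bullet,N_\bullet}$ quoted after Lemma~\ref{resol} applied to $M_\bullet=C_M$, $N_\bullet=C_N$, recalling $C_M=(\Omega_M\to P_M)$ so that the Hom-spaces reduce to data of the projective resolutions.

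For \eqref{h6}, which is the hardest relation, I would start from $\X_{P[1]}\ast\X_M=[Z_P]\ast K_{-\hat P_M}\ast[C_M]=K_{-\hat P_M}\ast[Z_P]\ast[C_M]$ and expand $[Z_P]\ast[C_M]$ in $\H_{\tw}(C_2(\P))$. The essential combinatorial input is to identify the extensions of $C_M$ by $Z_P$: a short exact sequence $0\to Z_P\to E_\bullet\to C_M\to 0$ in $C_2(\P)$ should correspond, after decomposing $E_\bullet=K_{?}\oplus Z_Q\oplus C_B$ via Lemma~\ref{fenjie}, to the data of a morphism $f:P\to M$ in $\A$ with $\Ker f\cong Q$ and $\Coker f\cong B$, which is precisely what the set ${}_Q\Hom_\A(P,M)_B$ counts. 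I would make this correspondence precise by analyzing how a map $f\colon P\to M$ induces a pushout/pullback diagram relating the complexes, and then invoke the counting identity \eqref{xjs} to rewrite the resulting Hall numbers; the twist $q^{-\lr{P,M}}$ emerges from the Euler-form exponent $\lr{Z_P,C_M}$ via the same closed formula. The main obstacle I anticipate is exactly this bijective (or at least numerical) matching between morphism-category extension classes and the kernel/cokernel stratification of $\Hom_\A(P,M)$: getting the automorphism-group factors $a_L$ in \eqref{xjs} to align correctly with the $F$-numbers produced by the Hall product, and confirming that the localization by $[K_P]^{-1}$ cleanly removes every projective-injective summand without leaving residual $K$-factors. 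Once these bookkeeping issues are settled, the spanning claim follows since every monomial in the generators $K_\alpha$, $[C_M]$, $[Z_P]$ reduces—via \eqref{h1}--\eqref{h6}—to the normal form $K_\alpha\ast\X_{M\oplus P[1]}$.
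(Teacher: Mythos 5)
A point of orientation first: the paper does not actually prove Theorem~\ref{ydygx} --- it quotes it from \cite[Theorem 5.2]{DXZ} --- so your attempt can only be measured against what a complete proof requires. Your high-level strategy (reduce each relation to a Hall-number computation in $C_2(\P)$, decompose middle terms by Lemma~\ref{fenjie}, absorb $K$-summands into the localization, count morphisms via (\ref{xjs}), and obtain spanning by reduction to normal form) is indeed the strategy of the cited source, and relations (\ref{h1}), (\ref{h2}), (\ref{h5}) are, as you say, immediate from (\ref{KK}), (\ref{KX}) and (\ref{XMP}). But the write-up contains one genuine error and leaves unproved precisely the two relations that carry the content of the theorem.

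The error is in your treatment of (\ref{h3}): the twisting exponent $\lr{Z_P,Z_Q}$ does \emph{not} vanish. Since $Z_Q$ is injective one has $\Ext^1_{C_2(\P)}(Z_P,Z_Q)=0$, but $\Hom_{C_2(\P)}(Z_P,Z_Q)\cong\Hom_{\A}(P,Q)$, so $\lr{Z_P,Z_Q}=\dim_k\Hom_{\A}(P,Q)$, which is nonzero already for $Q=P$. Taken literally, your argument yields $[Z_P]\ast[Z_Q]=q^{-\dim_k\Hom_{\A}(P,Q)}[Z_{P\oplus Q}]$, which is false; the correct mechanism is that the twist exactly cancels the denominator $|\Hom_{C_2(\P)}(Z_P,Z_Q)|$ appearing in Definition~\ref{Hall algebra of abelian category}. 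More seriously, in (\ref{h4}) the exclusion of $Z$-type summands from middle terms requires an argument: applying the snake lemma to $0\to C_M\to E_\bullet\to C_N\to 0$ shows the structure map of $E_\bullet$ is injective, whence no $Z$-summand; your parenthetical (``the $C_\bullet$ objects have no injective summands of type $Z$'') is not a reason, since a middle term need not inherit the summand structure of its ends. And the central claim --- that the $C_2(\P)$ structure constants, after absorbing $K$-summands into the localization, reduce to $|\Ext^1_{\A}(M,N)_L|/|\Hom_{\A}(M,N)|$ with twist exactly $q^{\lr{M,N}}$ --- is asserted, not verified: note that $\lr{C_M,C_N}-\lr{M,N}=\dim_k\Hom_{\A}(P_M,\Omega_N)$, so the two Euler forms genuinely disagree, and this discrepancy has to be cancelled against ratios of Hom-counts and automorphism factors; that cancellation is where \cite{DXZ} does its real work. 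The same applies to (\ref{h6}), where you yourself identify the numerical matching between extension classes $0\to Z_P\to E_\bullet\to C_M\to 0$ and the strata ${}_Q\Hom_{\A}(P,M)_B$, including the $a_L$ factors in (\ref{xjs}), as the ``main obstacle'' and then defer it. Since (\ref{h4}) and (\ref{h6}) are exactly the nontrivial assertions, what you have is a plausible roadmap rather than a proof.
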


\section{Bialgebra structure on the Hall algebra $\M\H(\A)$ of morphisms}
In this section, we define a structure of a bialgebra without counit on the Hall algebra $\M\H(\A)$ of morphisms. That is, we define a comultiplication map $\Delta$ on $\M\H(\A)$ such that $\Delta$ is an algebra homomorphism. Let us define the multiplication $\ast$ on $\M\H(\A)\otimes\M\H(\A)$ by
\begin{equation}\label{nihao}
\begin{split}
&(K_{\alpha}\ast\mathbb{X}_{M\oplus P[1]}\otimes K_{\beta}\ast\mathbb{X}_{N\oplus Q[1]})\ast(K_{\gamma}\ast\mathbb{X}_{U\oplus S[1]}\otimes K_{\delta}\ast\mathbb{X}_{V\oplus T[1]}):=\\
&q^{(\hat{N}-\hat{Q},\hat{U}-\hat{S})+\lr{\hat{M}-\hat{P},\hat{V}-\hat{T}}}
(K_{\alpha+\gamma}\ast\mathbb{X}_{M\oplus P[1]}\ast\mathbb{X}_{U\oplus S[1]}\otimes K_{\beta+\delta}\ast\mathbb{X}_{N\oplus Q[1]}\ast\mathbb{X}_{V\oplus T[1]})
\end{split}
\end{equation}
for any $\alpha,\beta,\gamma,\delta\in K(\A)$, $M,N,U,V\in\A$ and $P,Q,S,T\in\P$.
We also define a homomorphism of $\ZZ[v,v^{-1}]$-modules
\begin{equation*}
\Delta:\M\H(\A)\longrightarrow \M\H(\A)\otimes\M\H(\A)
\end{equation*}
by
\begin{equation}\label{yucheng}\Delta(K_{\alpha}\ast\mathbb{X}_{L\oplus P[1]}):=\sum\limits_{[M],[N]}
q^{\lr{\hat{M},\hat{N}-\hat{P}}}F_{MN}^L(\mathbb{X}_M\otimes K_\alpha\ast\mathbb{X}_{N\oplus P[1]})
\end{equation} for any $\alpha\in K(\A)$, $L\in\A$ and $P\in\P$.
\begin{remark}
Note that $\M\H(\A)$ is a $K(\A)$-graded algebra by defining the degree of $K_{\alpha}\ast\mathbb{X}_{M\oplus P[1]}$ to be $\hat{M}-\hat{P}$ for any $\alpha\in K(\A), M\in\A$ and $P\in\P$. For any elements $x_{ij}\in\M\H(\A)$ with the degree equal to $\alpha_{ij}\in K(\A)$, where $i=1,2,3$ and $j=1,2$, it is easy to see that $(\alpha_{12},\alpha_{21})+\lr{\alpha_{11},\alpha_{22}}+(\alpha_{12}+\alpha_{22},\alpha_{31})+\lr{\alpha_{11}+\alpha_{21},\alpha_{32}}
=(\alpha_{22},\alpha_{31})+\lr{\alpha_{21},\alpha_{32}}+(\alpha_{12},\alpha_{21}+\alpha_{31})+\lr{\alpha_{11},\alpha_{22}+\alpha_{32}}$, and then $[(x_{11}\otimes x_{12})\ast(x_{21}\otimes x_{22})]\ast(x_{31}\otimes x_{32})=(x_{11}\otimes x_{12})\ast[(x_{21}\otimes x_{22})\ast(x_{31}\otimes x_{32})]$, i.e., the twisted multiplication (\ref{nihao}) on $\M\H(\A)\otimes\M\H(\A)$ is also associative.
\end{remark}

\begin{proposition}\label{sdsjg}
The map $\Delta:(\M\H(\A),\ast)\longrightarrow (\M\H(\A)\otimes\M\H(\A),\ast)$ is a homomorphism of algebras.
\end{proposition}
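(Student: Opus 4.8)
The plan is to verify that $\Delta$ respects the product on the six families of relations listed in Theorem~\ref{ydygx}, since these generate $\M\H(\A)$ as an algebra. Concretely, I want to show $\Delta(x\ast y)=\Delta(x)\ast\Delta(y)$ for $x,y$ ranging over the generators $K_\alpha\ast\X_{M\oplus P[1]}$. Because the $K_\alpha$ are grouplike-like elements that commute past everything (by \eqref{KK} and \eqref{KX}) and $\Delta(K_\alpha\ast\X_{L\oplus P[1]})$ carries $K_\alpha$ only on the right tensor factor, the $K$-part should separate out cleanly; the twisting power $q^{\lr{\hat M,\hat N-\hat P}}$ in the definition of $\Delta$ is designed precisely so that these central factors track correctly. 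So the genuine content is to check compatibility on the $\X$-generators, i.e. to compare $\Delta(\X_{M\oplus P[1]}\ast\X_{N\oplus Q[1]})$ with $\Delta(\X_{M\oplus P[1]})\ast\Delta(\X_{N\oplus Q[1]})$, where on the right the product uses the twisted multiplication on $\M\H(\A)\otimes\M\H(\A)$ with its quadratic-form prefactor $q^{(\hat N-\hat Q,\hat U-\hat S)+\lr{\hat M-\hat P,\hat V-\hat T}}$.

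First I would reduce to the four building-block cases dictated by the multiplication rules \eqref{h3}--\eqref{h6}: namely $\X_M\ast\X_N$, $\X_M\ast\X_{P[1]}$, $\X_{P[1]}\ast\X_M$, and $\X_{P[1]}\ast\X_{Q[1]}$. The cases $\X_M\ast\X_{P[1]}=\X_{M\oplus P[1]}$ and $\X_{P[1]}\ast\X_{Q[1]}=\X_{(P\oplus Q)[1]}$ are essentially bookkeeping: apply the definition \eqref{yucheng} to both sides, use that $\Delta(\X_{P[1]})=\X_0\otimes\X_{P[1]}$ (the $L=0$ term, with $M=N=0$), and match the twisting exponents. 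The case $\X_M\ast\X_N$, expanded via \eqref{h4} into $\sum_{[L]}$ of Hall numbers, is where the comultiplication of the ordinary twisted Hall algebra $\H_{\tw}(\A)$ enters; applying $\Delta$ and comparing coefficients reduces to Green's coassociativity/compatibility identity relating $F_{MN}^L$ to the structure constants on both sides, together with the bilinearity of $\lr{\cdot,\cdot}$ and $(\cdot,\cdot)$ on $K(\A)$ to reconcile the $q$-powers.

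The hard part will be the mixed case $\X_{P[1]}\ast\X_M$, governed by \eqref{h6}, whose right-hand side involves the kernel-cokernel counting numbers $|{}_Q\Hom_\A(P,M)_B|$ and their expansion \eqref{xjs} as $\sum_{[L]}a_L F^P_{LQ}F^M_{BL}$. Here applying $\Delta$ to the left produces, via \eqref{yucheng}, a double sum over subobject data of $M$, while applying $\Delta$ to each factor and multiplying in $\M\H(\A)\otimes\M\H(\A)$ reintroduces \eqref{h6}-type products in each tensor slot; reconciling the two requires a careful manipulation of the associativity identity \eqref{jiehe} for Hall numbers together with the homomorphism formula \eqref{xjs}. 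I expect the main obstacle to be bookkeeping the interplay between the Hom/Ext counting and the quadratic twisting exponents so that every power of $q$ cancels exactly — in particular verifying that the symmetric form contribution $(\hat N-\hat Q,\hat U-\hat S)$ and the Euler form contribution $\lr{\hat M-\hat P,\hat V-\hat T}$ in the tensor-product multiplication combine with the $q^{\lr{\hat M,\hat N-\hat P}}$ weights from $\Delta$ to give a symmetric identity in the summation indices.

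To keep the computation manageable I would first treat the torus part $K_\alpha$ abstractly, establishing once and for all that it factors out of every identity, and then perform the $\X$-computations with $\alpha=\beta=0$; reinstating the $K$'s at the end is then routine using \eqref{h1}--\eqref{h2}. Throughout, the key algebraic inputs are the Riedtmann--Peng formula and the associativity identity \eqref{jiehe}, which I anticipate using repeatedly to rewrite nested Hall numbers, and the bilinearity of the Euler and symmetric Euler forms to collect exponents.
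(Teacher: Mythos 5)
Your proposal is correct and follows essentially the same route as the paper: both reduce the claim to checking that $\Delta$ preserves the defining relations of Theorem~\ref{ydygx}, dispose of the $K_\alpha$ and projective/shift relations as easy bookkeeping, invoke Green's formula to handle the module--module relation \eqref{h4} (the paper packages this via the embedding $\H_{\tw}(\A)\hookrightarrow\M\H(\A)$ and a commutative diagram), and settle the genuinely hard mixed relation \eqref{h6} by expanding $|{}_Q\Hom_{\A}(P,M)_B|$ through \eqref{xjs} and collapsing nested Hall numbers with the associativity identity \eqref{jiehe}. Your identification of \eqref{h6} as the crux, and of \eqref{xjs} plus \eqref{jiehe} as the tools to resolve it, is exactly how the paper's computation proceeds.
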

\begin{proof}
It suffices to prove that $\Delta$ preserves all the relations in Theorem \ref{ydygx}. We only prove the relations
(\ref{h4}) and (\ref{h6}), since the other relations can be easily proved.

By Theorem \ref{ydygx} (see also \cite[Theorem 3.5]{DXZ}), the map $$\xymatrix{\Psi:\H_{\tw}(\A)\ar@{^{(}->}[r]&\M\H(\A),} \xymatrix{[M]\ar@{|->}[r]&\X_M}$$ is an embedding of algebras.
It follows from Green's formula (cf. \cite{Gr95}) that the map
$$\delta:\H_{\tw}(\A)\longrightarrow\H_{\tw}(\A)\otimes\H_{\tw}(\A)$$ defined by
\begin{equation}\delta([L])=\sum\limits_{[M],[N]}q^{\lr{M,N}}F_{MN}^L[M]\otimes[N]\end{equation} is a homomorphism of algebras, when the multiplication on $\H_{\tw}(\A)\otimes\H_{\tw}(\A)$ is given by
\begin{equation}\label{tttt}
([M_1]\otimes[N_1])\ast([M_2]\otimes[N_2]):=q^{(\hat{M}_2,\hat{N}_1)+\lr{\hat{M}_1,\hat{N}_2}}([M_1]\ast[M_2]\otimes[N_1]\ast[N_2]).\end{equation}
Here we remark that the twisted multiplication (\ref{tttt}) is also associative by considering the restriction of the twisted multiplication (\ref{nihao}) to $\H_{\tw}(\A)\otimes\H_{\tw}(\A)$.
By the commutative diagram
\begin{equation*}
\xymatrix{\H_{\tw}(\A)\ar[r]^-{\delta}\ar[d]_-{\Psi}& \H_{\tw}(\A)\otimes\H_{\tw}(\A)\ar[d]^-{\Psi\otimes\Psi}\\
\M\H(\A)\ar[r]^-{\Delta}&\M\H(\A)\otimes\M\H(\A),}
\end{equation*}
we obtain that $\Delta$ preserves the relation (\ref{h4}).

Next we prove that $\Delta$ preserves the relation (\ref{h6}). On the one hand,
\begin{flalign*}
\Delta(\mathbb{X}_{P[1]})\ast\Delta(\mathbb{X}_{L})&=\sum\limits_{[M],[N]}q^{\lr{\hat{M},\hat{N}}-(\hat{P},\hat{M})}F_{MN}^L\mathbb{X}_M\otimes\mathbb{X}_{P[1]}\ast\mathbb{X}_N\\
&=\sum\limits_{[M],[N],[Q],[C]}q^{\lr{\hat{M},\hat{N}}-(\hat{P},\hat{M})-\lr{\hat{P},\hat{N}}}F_{MN}^L|{}_Q\Hom_{\A}(P,N)_C|\mathbb{X}_M\otimes\mathbb{X}_{C\oplus Q[1]}\\
&=\sum\limits_{[M],[N],[Q],[C],[D]}q^{\lr{\hat{M},\hat{N}-\hat{P}}-\lr{\hat{P},\hat{M}+\hat{N}}}a_DF_{MN}^LF_{DQ}^PF_{CD}^N\mathbb{X}_M\otimes\mathbb{X}_{C\oplus Q[1]}\\
&=\sum\limits_{[M],[Q],[C],[D]}q^{\lr{\hat{M},\hat{C}-\hat{Q}}-\lr{\hat{P},\hat{L}}}a_DF_{MCD}^LF_{DQ}^P\mathbb{X}_M\otimes\mathbb{X}_{C\oplus Q[1]}.
\end{flalign*}
On the other hand,
\begin{flalign*}
&\sum\limits_{[B],[Q]}q^{-\lr{P,L}}|{}_Q\Hom_{\A}(P,L)_B|\Delta(\mathbb{X}_{B\oplus Q[1]})\\&=\sum\limits_{[B],[Q],[M],[C]}q^{\lr{\hat{M},\hat{C}-\hat{Q}}-\lr{\hat{P},\hat{L}}}|{}_Q\Hom_{\A}(P,L)_B|F_{MC}^B\mathbb{X}_M\otimes\mathbb{X}_{C\oplus Q[1]}\\
&=\sum\limits_{[B],[Q],[M],[C],[D]}q^{\lr{\hat{M},\hat{C}-\hat{Q}}-\lr{\hat{P},\hat{L}}}a_DF^P_{DQ}F^L_{BD}F_{MC}^B\mathbb{X}_M\otimes\mathbb{X}_{C\oplus Q[1]}\\
&=\sum\limits_{[Q],[M],[C],[D]}q^{\lr{\hat{M},\hat{C}-\hat{Q}}-\lr{\hat{P},\hat{L}}}a_DF^L_{MCD}F^P_{DQ}\mathbb{X}_M\otimes\mathbb{X}_{C\oplus Q[1]}.
\end{flalign*}
Thus, $$\Delta(\mathbb{X}_{P[1]})\ast\Delta(\mathbb{X}_{L})=\sum\limits_{[B],[Q]}q^{-\lr{P,L}}|{}_Q\Hom_{\A}(P,L)_B|\Delta(\mathbb{X}_{B\oplus Q[1]}).$$
\end{proof}
\begin{remark} $(1)$~Let $m: (\M\H(\A)\otimes\M\H(\A),\ast)\longrightarrow (\M\H(\A),\ast)$ be the multiplication map.
The Hall algebra $(\M\H(\A),m,\Delta)$ is a bialgebra without counit. In fact, for any $\mathbb{Z}[v,v^{-1}]$-module homomorphism $\epsilon:\M\H(\A)\rightarrow\mathbb{Z}[v,v^{-1}]$, $m\circ(\epsilon\otimes\id)\circ\Delta(\X_{P[1]})=\epsilon(1)\X_{P[1]}$, while $m\circ(\id\otimes\epsilon)\circ\Delta(\X_{P[1]})=\epsilon(\X_{P[1]})1$. Clearly, $m\circ(\epsilon\otimes\id)\circ\Delta(\X_{P[1]})\neq m\circ(\id\otimes\epsilon)\circ\Delta(\X_{P[1]})$.

$(2)$~The comultiplication defined in (\ref{yucheng}) is not the intrinsic comultiplication (cf. \cite{Gr95,Hubery}) of the Hall algebra of an exact category.
\end{remark}

\section{Integration map on the Hall algebra $\M\H(\A)$ of morphisms}
In this section, we define an integration map on the Hall algebra $\M\H(\A)$ of morphisms.
First of all, we give a characterization on the Grothendieck group $K(C_2(\P))$ of the morphism category $C_2(\P)$.
Assume that the rank of $K(\A)$ is equal to $n$. Using the basis consisting of simple $A$-modules, we identify $K(\A)$ with $\mathbb{Z}^n$ by sending $\hat{M}$ to $\Dim M$ for any $M\in\A$.

Given an object $M_\bullet=(\xymatrix{M_{-1}\ar[r]^-{f}&M_0})\in C_2(\P)$, we define
$$\Dim M_\bullet:={\Dim M_0-\Dim M_{-1}\choose\Dim M_0}\in\mathbb{Z}^{2n}.$$
For any short exact sequence $$0\longrightarrow M_\bullet \longrightarrow L_\bullet\longrightarrow N_\bullet\longrightarrow0$$ in $C_2(\P)$, it is easy to see that $\Dim L_\bullet=\Dim M_\bullet+\Dim N_\bullet$.

Let $P_1,P_2,\cdots,P_n$ be all indecomposable projective objects in $\A$ up to isomorphism.
\begin{lemma}\label{ftg}
The Grothendieck group $K(C_2(\P))$ is a free abelian group having as a basis the set
$$\{\hat{Z}_{P_i},\hat{K}_{P_i}~|~1\leq i\leq n\}$$
and there exists a unique group isomorphism $f: K(C_2(\P))\longrightarrow \mathbb{Z}^{2n}$ such that $f(\hat{M_\bullet})=\Dim M_\bullet$ for each object $M_\bullet$ in $C_2(\P)$.
\end{lemma}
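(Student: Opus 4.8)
The plan is to exploit the additivity of $\Dim$ on short exact sequences in $C_2(\P)$, which is recorded just before the statement. This additivity, together with invariance under isomorphism, means that $\Dim$ descends through the universal property of the Grothendieck group to a unique homomorphism $f\colon K(C_2(\P))\to\ZZ^{2n}$ with $f(\hat{M_\bullet})=\Dim M_\bullet$; uniqueness is automatic since the classes $\hat{M_\bullet}$ generate $K(C_2(\P))$. The genuine content is therefore twofold: that $\{\hat Z_{P_i},\hat K_{P_i}\mid 1\le i\le n\}$ is a $\ZZ$-basis, and that $f$ is bijective. I would establish both at once, using $f$ itself to detect linear independence.

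First I would show that $\{\hat Z_{P_i},\hat K_{P_i}\}$ generates $K(C_2(\P))$. By Lemma \ref{fenjie} every object equals $K_P\oplus Z_Q\oplus C_M$, so it suffices to express each $\hat C_M$ through the $\hat Z$'s and $\hat K$'s. Two short exact sequences in $C_2(\P)$ accomplish this. Specializing (\ref{tfj}) to $M_\bullet=Z_P$ (so $M_{-1}=P$, $M_0=0$, and $C_0=0$) gives $0\to C_P\to K_P\to Z_P\to0$, whence $\hat C_P=\hat K_P-\hat Z_P$ for every projective $P$. For general $M$, observing that $C_{P_M}=(0\to P_M)$, one has the sequence $0\to C_{P_M}\to C_M\to Z_{\Omega_M}\to0$ arising from the chain map $(0,\id_{P_M})$, whose cokernel is $(\Omega_M\to 0)=Z_{\Omega_M}$; this yields $\hat C_M=\hat C_{P_M}+\hat Z_{\Omega_M}=\hat K_{P_M}-\hat Z_{P_M}+\hat Z_{\Omega_M}$. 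As $\hat K$ and $\hat Z$ are additive on direct sums, everything lands in the span of $\{\hat Z_{P_i},\hat K_{P_i}\}$, proving generation.

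For independence and bijectivity I would compute $f$ on these generators, namely $f(\hat Z_{P_i})=\binom{-\Dim P_i}{0}$ and $f(\hat K_{P_i})=\binom{0}{\Dim P_i}$. The key input is that $\{\Dim P_1,\dots,\Dim P_n\}$ is a $\ZZ$-basis of $\ZZ^n$: since $A$ is hereditary, the projective resolution $0\to\Omega S_i\to P_i\to S_i\to0$ with $\Omega S_i$ projective shows that the $[P_i]$ generate $K(\A)$, and $n$ generators of a free group of rank $n$ form a basis (equivalently, the Cartan matrix $C=[\Dim P_1\mid\cdots\mid\Dim P_n]$ is unimodular). Arranging the images of the $2n$ generators as the columns of the block matrix $\left(\begin{smallmatrix}-C&0\\0&C\end{smallmatrix}\right)$, whose determinant is $\pm1$, I conclude that $f$ carries the generating set to a $\ZZ$-basis of $\ZZ^{2n}$. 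This forces the generators to be linearly independent, so $\{\hat Z_{P_i},\hat K_{P_i}\}$ is a basis and $K(C_2(\P))$ is free of rank $2n$, and it simultaneously forces $f$ to be an isomorphism.

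The individual steps are short; the points requiring the most care are the identification of the two short exact sequences inside $C_2(\P)$ (recognizing $C_P=(0\to P)$ for projective $P$ and verifying the kernels and cokernels of the relevant chain maps), and the unimodularity of the Cartan matrix, which is exactly where heredity of $A$ enters. Once these are in hand, the single determinant computation closes the argument.
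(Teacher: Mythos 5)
Your proof is correct, and its second half---deducing linear independence, freeness, and bijectivity of $f$ from the fact that $f$ sends the $2n$ generators to a $\mathbb{Z}$-basis of $\mathbb{Z}^{2n}$---is exactly the paper's argument. Where you diverge is the generation step. The paper gets it in one stroke: for $M_\bullet$ with $M_0=\oplus_{i=1}^n a_iP_i$ and $M_{-1}=\oplus_{i=1}^n b_iP_i$, the injective resolution of Lemma \ref{resol},
$$0\longrightarrow M_\bullet\longrightarrow \oplus_{i=1}^n b_iZ_{P_i}\oplus\oplus_{i=1}^n a_iK_{P_i}\longrightarrow \oplus_{i=1}^n a_iZ_{P_i}\longrightarrow 0,$$
immediately yields $\hat M_\bullet=\sum_{i=1}^n\bigl((b_i-a_i)\hat Z_{P_i}+a_i\hat K_{P_i}\bigr)$ for an arbitrary object, with no case analysis. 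You instead route through the Krull--Schmidt decomposition of Lemma \ref{fenjie} and then handle $\hat C_M$ with two hand-built short exact sequences: $0\to C_P\to K_P\to Z_P\to 0$ obtained by specializing (\ref{tfj}), and $0\to C_{P_M}\to C_M\to Z_{\Omega_M}\to 0$ induced by the chain map $(0,\id)$. Both are verified correctly (componentwise exactness, $C_P=(0\to P)$ for projective $P$, and $\Omega_M$ projective by heredity so all terms stay in $C_2(\mathscr{P})$), so your argument is sound, but it is longer and invokes more machinery than the paper needs. One genuine gain on your side: you actually justify that $\{\Dim P_1,\dots,\Dim P_n\}$ is a $\mathbb{Z}$-basis of $\mathbb{Z}^n$ (the $\hat P_i$ generate $K(\mathcal{A})$ since syzygies of simples are projective, and $n$ generators of a rank-$n$ free group form a basis, i.e.\ the Cartan matrix is unimodular), whereas the paper simply asserts without comment that $\{\Dim Z_{P_i},\Dim K_{P_i}\}$ is a basis of $\mathbb{Z}^{2n}$; your determinant computation makes explicit the point on which that assertion rests.
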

\begin{proof}
For any object $M_\bullet\in C_2(\P)$ with $M_0=\oplus_{i=1}^na_iP_i$ and $M_{-1}=\oplus_{i=1}^nb_iP_i$, where $a_i, b_i\in\mathbb{N}$, $1\leq i\leq n$, by Lemma \ref{resol}, taking an injective resolution
$$0\longrightarrow M_{\bullet}\longrightarrow \oplus_{i=1}^nb_iZ_{P_i}\oplus \oplus_{i=1}^na_iK_{P_i}\longrightarrow \oplus_{i=1}^na_iZ_{P_i}\longrightarrow0,$$
we obtain that in $K(C_2(\P))$
$$\hat{M}_\bullet=\sum\limits_{i=1}^n((b_i-a_i)\hat{Z}_{P_i}+a_i\hat{K}_{P_i}).$$ This shows that $\{\hat{Z}_{P_i},\hat{ K}_{P_i}~|~1\leq i\leq n\}$ generates the group $K(C_2(\P))$.

For any objects $M_\bullet,N_\bullet\in C_2(\P)$, it is clear that $M_\bullet\cong N_\bullet$ implies $\Dim M_\bullet=\Dim N_\bullet$. Thus, the additivity of ${\bf dim}$ implies the existence of a unique group homomorphism $f: K(C_2(\P))\longrightarrow \mathbb{Z}^{2n}$ such that $f(\hat{M_\bullet})=\Dim M_\bullet$ for each object $M_\bullet$ in $C_2(\P)$. Since  $$\{\Dim Z_{P_i},\Dim K_{P_i}~|~1\leq i\leq n\}=\{{-{\bf dim\,} P_i\choose\mathbf{0}},{\mathbf{0}\choose{\bf dim\,}P_i}~|~1\leq i\leq n\}$$ is a basis of the free group $\mathbb{Z}^{2n}$, we conclude that $\{\hat{Z}_{P_i},\hat{ K}_{P_i}~|~1\leq i\leq n\}$ is $\mathbb{Z}$-linearly independent in $K(C_2(\P))$. It follows that $K(C_2(\P))$ is free and $f$ is an isomorphism.
\end{proof}

Let $\mathcal{T}$ be the $\ZZ[v,v^{-1}]$-algebra with a basis $\{X^{\alpha}~|~\alpha\in \mathbb{Z}^{2n}\}$ and
multiplication defined by
\begin{equation}\label{jhtorus}X^{\alpha}\diamond X^{\beta}=X^{\alpha+\beta}.\end{equation}
We give integration maps on Hall algebras of morphisms in the following
\begin{proposition}\label{jfys1}
The integration map $$\int:\H_{\tw}(C_2(\P))\longrightarrow\mathcal{T},~~[M_\bullet]\mapsto X^{{\bf dim\,}{M_\bullet}}$$
is a homomorphism of algebras.
\end{proposition}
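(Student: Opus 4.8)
The plan is to verify the homomorphism property on the $\ZZ[v,v^{-1}]$-basis $\{[M_\bullet]\}$ and then extend by linearity, since both the twisted product on $\H_{\tw}(C_2(\P))$ and the product $\diamond$ on $\mathcal{T}$ are $\ZZ[v,v^{-1}]$-bilinear. Fix $M_\bullet,N_\bullet\in C_2(\P)$. Unwinding the definitions of the twisted multiplication and of $\int$, I would first write
\[
\int([M_\bullet]\ast[N_\bullet])=q^{\lr{M_\bullet,N_\bullet}}\sum_{[L_\bullet]}\frac{|\Ext_{C_2(\P)}^1(M_\bullet,N_\bullet)_{L_\bullet}|}{|\Hom_{C_2(\P)}(M_\bullet,N_\bullet)|}\,X^{\Dim L_\bullet},
\]
where the sum runs over isomorphism classes in $C_2(\P)$, and I have used that $C_2(\P)$ is extension-closed in $C_2(\A)$ so that its Hom- and Ext-spaces agree with those computed in $C_2(\A)$.

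The first key step is to factor the monomial out of the sum. Whenever the coefficient is nonzero, $L_\bullet$ is the middle term of a short exact sequence $0\to N_\bullet\to L_\bullet\to M_\bullet\to 0$ in $C_2(\P)$, and by the additivity of $\Dim$ on short exact sequences recorded just before Lemma \ref{ftg} we then have $\Dim L_\bullet=\Dim M_\bullet+\Dim N_\bullet$. Hence every surviving term carries the same monomial $X^{\Dim M_\bullet+\Dim N_\bullet}$, which I can pull out. Using the partition $\Ext_{C_2(\P)}^1(M_\bullet,N_\bullet)=\bigsqcup_{[L_\bullet]}\Ext_{C_2(\P)}^1(M_\bullet,N_\bullet)_{L_\bullet}$ by middle term to collapse the remaining cardinalities, I obtain
\[
\int([M_\bullet]\ast[N_\bullet])=q^{\lr{M_\bullet,N_\bullet}}\,\frac{|\Ext_{C_2(\P)}^1(M_\bullet,N_\bullet)|}{|\Hom_{C_2(\P)}(M_\bullet,N_\bullet)|}\,X^{\Dim M_\bullet+\Dim N_\bullet}.
\]

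The final step is the cancellation of the twist against the Hall normalization. Since $\Hom_{C_2(\P)}(M_\bullet,N_\bullet)$ and $\Ext_{C_2(\P)}^1(M_\bullet,N_\bullet)$ are $\mathbb{F}_q$-vector spaces, their cardinalities equal $q$ raised to their $k$-dimensions; and because $C_2(\P)$ has global dimension one (Lemma \ref{resol}), the Euler form is precisely $\lr{M_\bullet,N_\bullet}=\dim_k\Hom_{C_2(\P)}(M_\bullet,N_\bullet)-\dim_k\Ext_{C_2(\P)}^1(M_\bullet,N_\bullet)$, whence $q^{\lr{M_\bullet,N_\bullet}}=|\Hom_{C_2(\P)}(M_\bullet,N_\bullet)|\,/\,|\Ext_{C_2(\P)}^1(M_\bullet,N_\bullet)|$. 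Substituting, the Hom- and Ext-factors cancel and I am left with $X^{\Dim M_\bullet+\Dim N_\bullet}=X^{\Dim M_\bullet}\diamond X^{\Dim N_\bullet}=\int([M_\bullet])\diamond\int([N_\bullet])$, as required.

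I do not anticipate a serious obstacle: the entire content is the exact cancellation between the quadratic twist and the normalization of the Hall structure constants, which is exactly what global dimension one affords. The one point that merits care is the factoring of the monomial out of the sum, which requires that every middle term $L_\bullet$ occurring in the product share the dimension vector $\Dim M_\bullet+\Dim N_\bullet$, so that the commutative monomials of $\mathcal{T}$ can be extracted despite the noncommutativity of the Hall product; this is guaranteed by the additivity of $\Dim$ already established. One also checks that $\int$ sends the unit of $\H_{\tw}(C_2(\P))$ to $X^{\mathbf{0}}$, so that $\int$ respects units as well.
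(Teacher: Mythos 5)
Your proof is correct and follows essentially the same route as the paper's: expand $\int([M_\bullet]\ast[N_\bullet])$, use additivity of $\Dim$ on short exact sequences to extract the monomial $X^{\Dim M_\bullet+\Dim N_\bullet}$, and cancel the twist $q^{\lr{M_\bullet,N_\bullet}}$ against $|\Ext^1_{C_2(\P)}(M_\bullet,N_\bullet)|/|\Hom_{C_2(\P)}(M_\bullet,N_\bullet)|$ after collapsing the sum over middle terms. The only cosmetic remark is that the identity $\lr{M_\bullet,N_\bullet}=\dim_k\Hom_{C_2(\P)}(M_\bullet,N_\bullet)-\dim_k\Ext^1_{C_2(\P)}(M_\bullet,N_\bullet)$ is the paper's definition of the form on $C_2(\P)$ rather than a consequence of global dimension one, so that step needs no appeal to Lemma \ref{resol}.
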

\begin{proof}
For any objects $M_\bullet,N_\bullet\in C_2(\P)$,
\begin{flalign*}\int[M_\bullet]\ast[N_\bullet] &= \sum\limits_{[L_\bullet]} q^{\lr{M_\bullet,N_\bullet}}{\frac{{|\Ext_{C_2(\P)}^1{{(M_\bullet,N_\bullet)}_{L_\bullet}}|}}{{|\Hom_{C_2(\P)}(M_\bullet,N_\bullet)|}}} X^{{\bf dim\,}{L_\bullet}}\\
&=\sum\limits_{[L_\bullet]} q^{\lr{M_\bullet,N_\bullet}}{\frac{{|\Ext_{C_2(\P)}^1{{(M_\bullet,N_\bullet)}_{L_\bullet}}|}}{{|\Hom_{C_2(\P)}(M_\bullet,N_\bullet)|}}} X^{{\bf dim\,}{M_\bullet}+{\bf dim\,}{N_\bullet}}\\
&=X^{{\bf dim\,}{M_\bullet}+{\bf dim\,}{N_\bullet}}\\
&=X^{{\bf dim\,}{M_\bullet}}\diamond X^{{\bf dim\,}{N_\bullet}}\\
&=\int[M]\diamond\int[N].\end{flalign*}
\end{proof}
In what follows,
for each $\alpha\in \mathbb{Z}^{n}$, we always set $\widetilde{\alpha}:={\mathbf{0}\choose\alpha}
\in \mathbb{Z}^{2n}$ and $\bar{\alpha}:={\alpha\choose\mathbf{0}}
\in \mathbb{Z}^{2n}$. For each given object in $\A$ we will always use the
corresponding lowercase boldface letter to denote its dimension vector.
\begin{corollary}
The integration map $$\int:\M\H(\A)\longrightarrow\mathcal{T},~~K_\alpha\ast[M_\bullet]\mapsto X^{\tilde{\alpha}+{\bf dim\,}{M_\bullet}}$$
is a homomorphism of algebras. In particular, $\int K_\alpha=X^{\tilde{\alpha}}$ and $\int \X_{M\oplus P[1]}=X^{\bar{\mathbf{m}}-\bar{\mathbf{p}}}$.
\end{corollary}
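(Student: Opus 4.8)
The plan is to leverage the algebra homomorphism $\int:\H_{\tw}(C_2(\P))\to\mathcal{T}$ from Proposition \ref{jfys1} and check that it descends to the localization $\M\H(\A)$, then verify the two explicit formulas. First I would observe that the localization $\M\H(\A)$ is obtained from $\H_{\tw}(C_2(\P))$ by inverting the elements $[K_P]$ for $P\in\P$. To extend $\int$ to the localization it suffices, by the universal property of localization, to show that $\int[K_P]$ is invertible in $\mathcal{T}$. Since $\Dim K_P={\mathbf{0}\choose\mathbf{p}}=\widetilde{\mathbf{p}}$, we have $\int[K_P]=X^{\widetilde{\mathbf{p}}}$, which is a unit in $\mathcal{T}$ with inverse $X^{-\widetilde{\mathbf{p}}}$ by the multiplication rule (\ref{jhtorus}). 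Hence $\int$ extends uniquely to an algebra homomorphism on $\M\H(\A)$, and one must record that this extension sends $[K_P]^{-1}$ to $X^{-\widetilde{\mathbf{p}}}$.

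Next I would compute the value of the extended map on the stated generators and confirm the two special formulas, which simultaneously pins down the general formula $K_\alpha\ast[M_\bullet]\mapsto X^{\widetilde{\alpha}+\Dim M_\bullet}$. For $K_\alpha$ with $\alpha=\hat{P}-\hat{Q}$, using $K_\alpha=[K_P]\ast[K_Q]^{-1}$ and multiplicativity gives $\int K_\alpha=X^{\widetilde{\mathbf{p}}}\diamond X^{-\widetilde{\mathbf{q}}}=X^{\widetilde{\mathbf{p}}-\widetilde{\mathbf{q}}}=X^{\widetilde{\alpha}}$, using that under the identification of Lemma \ref{ftg} we have $\widetilde{\mathbf{p}}-\widetilde{\mathbf{q}}=\widetilde{\mathbf{p}-\mathbf{q}}=\widetilde{\alpha}$; here one should note this is independent of the decomposition $\alpha=\hat P-\hat Q$, so the formula is well defined. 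Then for a general element $K_\alpha\ast[M_\bullet]$ multiplicativity yields $X^{\widetilde{\alpha}}\diamond X^{\Dim M_\bullet}=X^{\widetilde{\alpha}+\Dim M_\bullet}$, which is the asserted value.

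For the final special formula I would recall that $\X_{M\oplus P[1]}=K_{-\hat P_M}\ast[C_M\oplus Z_P]$ by definition. Applying the already-verified general formula with $\alpha=-\hat P_M$ gives $\int\X_{M\oplus P[1]}=X^{-\widetilde{\mathbf{p}}_M+\Dim(C_M\oplus Z_P)}$. The key computation is $\Dim(C_M\oplus Z_P)=\Dim C_M+\Dim Z_P$ by additivity of $\Dim$; from the minimal projective resolution (\ref{mpr}) we have $C_M=(\Omega_M\to P_M)$, so $\Dim C_M={\mathbf{p}_M-\boldsymbol{\omega}_M\choose\mathbf{p}_M}$, and $\Dim Z_P={-\mathbf{p}\choose\mathbf{0}}$. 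Since $\Dim M=\mathbf{p}_M-\boldsymbol{\omega}_M=\mathbf{m}$ (the dimension vector read off from the resolution, with $\mathbf{p}_M$ denoting $\Dim P_M$), the first components combine to $-\mathbf{p}_M+\mathbf{m}-\mathbf{p}$ while the second component is $\mathbf{p}_M-\mathbf{p}_M=\mathbf{0}$. Thus $\int\X_{M\oplus P[1]}={\mathbf{m}-\mathbf{p}\choose\mathbf{0}}$-part only, i.e. $X^{\overline{\mathbf{m}}-\overline{\mathbf{p}}}$ in the notation $\bar\alpha={\alpha\choose\mathbf{0}}$. The main obstacle, though entirely routine, is bookkeeping the $2n$-component vectors correctly and matching the $\widetilde{(\cdot)}$ and $\overline{(\cdot)}$ conventions so that the top-block cancellations produce exactly $\overline{\mathbf{m}}-\overline{\mathbf{p}}$; once the extension of $\int$ past localization is justified, the rest is direct substitution into (\ref{jhtorus}).
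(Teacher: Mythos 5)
Your proof is correct and takes essentially the same route as the paper: both rest on Proposition \ref{jfys1}, extend $\int$ across the localization (using that the $[K_P]$ commute with everything and land on units $X^{\widetilde{\mathbf{p}}}$ in $\mathcal{T}$), and then evaluate $\int\X_{M\oplus P[1]}$ from $\X_{M\oplus P[1]}=K_{-\hat{P}_M}\ast[C_M]\ast[Z_P]$ via additivity of $\Dim$, with the same block-vector cancellation giving $X^{\bar{\mathbf{m}}-\bar{\mathbf{p}}}$. The only difference is presentational: you invoke the universal property of localization at the central set $\{[K_P]\}$, whereas the paper defines the map on the spanning elements $K_\alpha\ast[M_\bullet]$ and checks multiplicativity directly from Proposition \ref{jfys1} and relation (\ref{KX}).
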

\begin{proof}
By definition, $\int K_\alpha=X^{\tilde{\alpha}}$ and $$\int (K_\alpha\ast[M_\bullet])=X^{\tilde{\alpha}+{\bf dim\,}{M_\bullet}}=X^{\tilde{\alpha}}\diamond X^{{\bf dim\,}{M_\bullet}}=\int K_\alpha\diamond\int [M_\bullet].$$
By Proposition \ref{jfys1} and the relation (\ref{KX}), we can prove that $\int$ is a homomorphism of algebras. Thus,
\begin{flalign*}
\int \X_{M\oplus P[1]}&=\int (K_{-\hat{P}_M}\ast[C_M]\ast[Z_P])\\
&=\int K_{-\hat{P}_M}\diamond\int[C_M]\diamond\int[Z_P]\\
&=X^{\mathbf{0}\choose-{\bf dim\,} P_M}\diamond X^{{\bf dim\,} M\choose{\bf dim\,} P_M}\diamond X^{{-{\bf dim\,} P\choose\mathbf{0}}}\\
&=X^{{\bf dim\,} M-{\bf dim\,} P\choose\mathbf{0}}\\
&=X^{\bar{\mathbf{m}}-\bar{\mathbf{p}}}.
\end{flalign*}
Therefore, we complete the proof.
\end{proof}

\section{Compatible pairs}
Let $Q$ be an acyclic valued quiver (cf. \cite{Rupel1,Rupel2}) with the vertex set $\{1,2,\cdots,n\}$. For each vertex $i$, let $d_i\in\mathbb{N^+}$ be the corresponding valuation. Note that each finite dimensional hereditary $k$-algebra can be obtained by taking the tensor algebra of the $k$-species $\mathfrak{S}$ associated to $Q$. We identify a $k$-species $\mathfrak{S}$ with its corresponding tensor algebra. Let $m\geq n$, we define a new quiver $\widetilde{Q}$ by attaching additional vertices $n+1,\ldots,m$ to $Q$. The full subquiver $Q$ is called the principal part of $\widetilde{Q}$.

For $1\leq i\leq m$, denote by $S_i$ the $i$-th simple module for $\widetilde{\mathfrak{S}}$ which is the $k$-species associated to $\widetilde{Q}$, and set $\mathcal {D}_i=\End_{\widetilde{\mathfrak{S}}}(S_i)$.
Let $\widetilde{R}$ and $\widetilde{R}'$ be the $m\times n$ matrices with the $i$-th row and $j$-th column elements given respectively by
$$r_{ij}=\dim_{\mathcal {D}_i}\Ext_{\widetilde{\mathfrak{S}}}^1(S_j,S_i)$$
and
$$r'_{ij}=\dim_{{\mathcal {D}_i}^{op}}\Ext_{\widetilde{\mathfrak{S}}}^1(S_i,S_j),$$
where $1\leq i\leq m$ and $1\leq j\leq n$. Define $\widetilde{B}=\widetilde{R}'-\widetilde{R}$, and denote by $\widetilde{I}$ the left $m\times n$ submatrix of the $m\times m$ identity matrix $I_m$. Denote the principal parts of the matrices $\widetilde{R}$, $\widetilde{R}'$ and $\widetilde{B}$ by $R$, $R'$ and $B$, respectively. That is, $B=R'-R$. Let $D_n=diag(d_1,\cdots,d_n)$,
it is easy to see that $D_nR'=R^{\rm tr}D_n$. Thus, $D_nB$ is skew-symmetric.
Moreover, the matrix representing the Euler form associated to $\mathfrak{S}$ under the standard basis is $(I_n-R^{\rm tr})D_n=D_n(I_n-R')$.

We always assume that there exists a skew-symmetric $m\times m$ integral
matrix $\Lambda$ such that
\begin{align}\label{compatible}
\Lambda(-\widetilde{B})={D_n\choose0}.\end{align} We remark that such $\widetilde{Q}$ and $\Lambda$ exist for a given quiver $Q$ (cf. \cite{Rupel1}). We call such $(\Lambda,\widetilde{B})$ a {\em compatible pair}. In what follows, we also denote by $\Lambda$ the skew-symmetric bilinear form on $\mathbb{Z}^{m}$ associated to the skew-symmetric matrix $\Lambda$.

For simplicity of notation, we set $\widetilde{E}=\widetilde{I}-\widetilde{R}'$ and $\widetilde{E}'=\widetilde{I}-\widetilde{R}$. Thus, $\widetilde{E}'-\widetilde{E}=\widetilde{B}$.

\begin{lemma}\cite[Lemma 3.1]{DSC}\label{eulerjs}
For any $\alpha,\beta\in\mathbb{Z}^n$, we have that
\vspace{0.8em}

$(1)$~~$\Lambda(\widetilde{E}\alpha,\widetilde{B}\beta)=-\lr{\beta,\alpha}$;\quad
$(2)$~~$\Lambda(\widetilde{B}\alpha,\widetilde{B}\beta)=\lr{\beta,\alpha}-\lr{\alpha,\beta}$.
\end{lemma}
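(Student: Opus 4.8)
The plan is to verify both identities by direct computation, translating the skew-symmetric bilinear form $\Lambda$ applied to the relevant vectors into an expression involving the defining matrices, and then recognizing the result as the Euler form. The key input is the compatibility relation (\ref{compatible}), namely $\Lambda(-\widetilde{B})={D_n\choose 0}$, together with the matrix description of the Euler form. Recall that the Euler form of $\mathfrak{S}$ is represented by $(I_n-R^{tr})D_n=D_n(I_n-R')=D_n\widetilde{E}_{\mathrm{prin}}$ in the principal part, so that for $\alpha,\beta\in\mathbb{Z}^n$ one has $\lr{\alpha,\beta}=\alpha^{tr}D_n(I_n-R')\beta=\alpha^{tr}(I_n-R^{tr})D_n\beta$.

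\smallskip

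\textbf{Proving (1).}

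First I would rewrite $\Lambda(\widetilde{E}\alpha,\widetilde{B}\beta)$ as the matrix product $(\widetilde{E}\alpha)^{tr}\Lambda(\widetilde{B}\beta)=\alpha^{tr}\widetilde{E}^{tr}\Lambda\widetilde{B}\beta$. By the compatibility relation (\ref{compatible}) we have $\Lambda\widetilde{B}=-\Lambda(-\widetilde{B})=-{D_n\choose 0}$, so the expression collapses to $-\alpha^{tr}\widetilde{E}^{tr}{D_n\choose 0}\beta$. The crucial observation is that multiplying by ${D_n\choose 0}$ extracts only the top $n$ rows of $\widetilde{E}$, i.e. its principal part $E=I_n-R'$. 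Hence the product becomes $-\alpha^{tr}E^{tr}D_n\beta=-\alpha^{tr}(I_n-R')^{tr}D_n\beta$. Using $D_n(I_n-R')=(I_n-R^{tr})D_n$ (equivalently $D_nR'=R^{tr}D_n$, stated in the excerpt) one identifies $(I_n-R')^{tr}D_n=D_n(I_n-R')=$ the Euler matrix, so this equals $-\beta^{tr}\bigl((I_n-R')^{tr}D_n\bigr)^{tr}\alpha$ after transposing the scalar; tracking the transposes carefully yields $-\lr{\beta,\alpha}$, as desired. I expect the only delicate point here to be keeping the order of arguments in $\lr{\cdot,\cdot}$ straight, since the transpose swaps $\alpha$ and $\beta$.

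\smallskip

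\textbf{Proving (2).}

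For the second identity I would write $\widetilde{B}=\widetilde{E}'-\widetilde{E}$ and use bilinearity, together with the skew-symmetry of $\Lambda$. Specifically, $\Lambda(\widetilde{B}\alpha,\widetilde{B}\beta)=\Lambda((\widetilde{E}'-\widetilde{E})\alpha,\widetilde{B}\beta)=\Lambda(\widetilde{E}'\alpha,\widetilde{B}\beta)-\Lambda(\widetilde{E}\alpha,\widetilde{B}\beta)$. The second term is $-\lr{\beta,\alpha}$ by part (1), so it contributes $+\lr{\beta,\alpha}$. For the first term I would run the same matrix computation as in (1) but with $\widetilde{E}'=\widetilde{I}-\widetilde{R}$ in place of $\widetilde{E}$; extracting the top $n$ rows gives the principal part $I_n-R$, and a parallel manipulation using $D_nR'=R^{tr}D_n$ should produce $-\lr{\alpha,\beta}$ (note the reversed order compared to (1), reflecting that $\widetilde{E}'$ corresponds to the transpose side of the Euler form). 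Combining, $\Lambda(\widetilde{B}\alpha,\widetilde{B}\beta)=-\lr{\alpha,\beta}+\lr{\beta,\alpha}$, which is exactly the claimed value. The main obstacle throughout is purely bookkeeping: correctly handling the skew-symmetry of $\Lambda$ (which introduces sign changes when the two arguments are interchanged) and confirming that $\Lambda(\widetilde{E}'\alpha,\widetilde{B}\beta)$ genuinely produces $-\lr{\alpha,\beta}$ rather than $-\lr{\beta,\alpha}$; I would double-check this by evaluating both parts on simple dimension vectors as a sanity check.
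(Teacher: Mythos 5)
The paper itself gives no proof of this lemma---it is imported verbatim from \cite[Lemma 3.1]{DSC}---so there is no internal argument to compare against; your direct matrix computation is the natural one, and its overall structure (use $\Lambda\widetilde{B}=-{D_n\choose 0}$ to collapse everything to the principal parts $E=I_n-R'$, $E'=I_n-R$, identify the result with the Euler matrix, then get part (2) from $\widetilde{B}=\widetilde{E}'-\widetilde{E}$, bilinearity and part (1)) is correct and does prove the lemma. One genuine error needs to be excised, however: the intermediate identity in your part (1), ``$(I_n-R')^{tr}D_n=D_n(I_n-R')$'', is false in the valued (non-symmetric) setting. From $D_nR'=R^{tr}D_n$ one gets $R'^{tr}D_n=D_nR$, hence $(I_n-R')^{tr}D_n=D_n(I_n-R)$, which is the \emph{transpose} $M^{tr}$ of the Euler matrix $M=(I_n-R^{tr})D_n=D_n(I_n-R')$, not $M$ itself; had you actually used that identity you would have concluded $\Lambda(\widetilde{E}\alpha,\widetilde{B}\beta)=-\lr{\alpha,\beta}$, which is the wrong answer. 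Your proof survives only because the step that follows bypasses the false claim: transposing the scalar $-\alpha^{tr}(I_n-R')^{tr}D_n\beta$ gives $-\beta^{tr}D_n(I_n-R')\alpha=-\lr{\beta,\alpha}$, which is correct, and in part (2) the term $\Lambda(\widetilde{E}'\alpha,\widetilde{B}\beta)=-\alpha^{tr}(I_n-R^{tr})D_n\beta=-\lr{\alpha,\beta}$ comes out directly with no transpose at all. So: delete the false parenthetical and the argument is complete. A marginally cleaner variant, in the style of the paper's own proof of Lemma \ref{xq}, is to transpose the compatibility relation to $\widetilde{B}^{tr}\Lambda=(D_n~0)$ and compute $\Lambda(\widetilde{E}\alpha,\widetilde{B}\beta)=-\Lambda(\widetilde{B}\beta,\widetilde{E}\alpha)=-\beta^{tr}(D_n~0)\widetilde{E}\alpha=-\beta^{tr}D_nE\alpha=-\lr{\beta,\alpha}$, which avoids scalar transposition entirely and makes the order of the arguments in $\lr{\cdot,\cdot}$ manifest.
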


Using Lemma \ref{eulerjs}, we easily obtain the following
\begin{lemma}\label{xq0}
\begin{itemize}
\item[(1)]For any $\alpha,\beta\in\mathbb{Z}^n$, we have that $$\Lambda(\widetilde{E}'\alpha,\widetilde{E}'\beta)=\Lambda(\widetilde{E}\alpha,\widetilde{E}\beta).$$
\item[(2)]For any $\alpha_i,\beta_i\in\mathbb{Z}^n$, $i=1,2$, we have that
$$\Lambda(\widetilde{E}\alpha_1+\widetilde{E}'\beta_1,\widetilde{E}\alpha_2+\widetilde{E}'\beta_2)
=\Lambda(\widetilde{E}(\alpha_1+\beta_1),\widetilde{E}(\alpha_2+\beta_2))-\lr{\beta_2,\alpha_1}+\lr{\beta_1,\alpha_2}.$$
\end{itemize}
\end{lemma}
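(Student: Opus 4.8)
The plan is to reduce both identities to Lemma \ref{eulerjs} by exploiting the single structural relation $\widetilde{E}'=\widetilde{E}+\widetilde{B}$ (which is exactly $\widetilde{E}'-\widetilde{E}=\widetilde{B}$), together with the bilinearity and the skew-symmetry of $\Lambda$. The only genuine inputs beyond formal manipulation are the two evaluations supplied by Lemma \ref{eulerjs}, namely $\Lambda(\widetilde{E}\alpha,\widetilde{B}\beta)=-\lr{\beta,\alpha}$ and $\Lambda(\widetilde{B}\alpha,\widetilde{B}\beta)=\lr{\beta,\alpha}-\lr{\alpha,\beta}$; everything else is careful bookkeeping of signs.

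For part $(1)$, I would write $\widetilde{E}'\alpha=\widetilde{E}\alpha+\widetilde{B}\alpha$ and $\widetilde{E}'\beta=\widetilde{E}\beta+\widetilde{B}\beta$, and expand $\Lambda(\widetilde{E}'\alpha,\widetilde{E}'\beta)$ into four terms by bilinearity. The diagonal term $\Lambda(\widetilde{E}\alpha,\widetilde{E}\beta)$ is the desired right-hand side. The two mixed terms are evaluated via Lemma \ref{eulerjs}$(1)$: the term $\Lambda(\widetilde{E}\alpha,\widetilde{B}\beta)$ equals $-\lr{\beta,\alpha}$ directly, while $\Lambda(\widetilde{B}\alpha,\widetilde{E}\beta)$ first requires flipping the arguments by skew-symmetry, giving $\Lambda(\widetilde{B}\alpha,\widetilde{E}\beta)=-\Lambda(\widetilde{E}\beta,\widetilde{B}\alpha)=\lr{\alpha,\beta}$. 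The last term $\Lambda(\widetilde{B}\alpha,\widetilde{B}\beta)=\lr{\beta,\alpha}-\lr{\alpha,\beta}$ is Lemma \ref{eulerjs}$(2)$. Summing, the four correction contributions $-\lr{\beta,\alpha}+\lr{\alpha,\beta}+\lr{\beta,\alpha}-\lr{\alpha,\beta}$ cancel, leaving exactly $\Lambda(\widetilde{E}\alpha,\widetilde{E}\beta)$.

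For part $(2)$, I would first absorb the $\widetilde{E}'$-parts by rewriting each argument as $\widetilde{E}\alpha_i+\widetilde{E}'\beta_i=\widetilde{E}(\alpha_i+\beta_i)+\widetilde{B}\beta_i$ for $i=1,2$. Expanding $\Lambda$ of the two resulting sums by bilinearity again yields four terms: the $\widetilde{E}$-$\widetilde{E}$ term is precisely $\Lambda(\widetilde{E}(\alpha_1+\beta_1),\widetilde{E}(\alpha_2+\beta_2))$, the first summand on the right-hand side, and the remaining three are handled exactly as in part $(1)$. Applying Lemma \ref{eulerjs} and skew-symmetry, these collect to $-\lr{\beta_2,\alpha_1+\beta_1}+\lr{\beta_1,\alpha_2+\beta_2}+\lr{\beta_2,\beta_1}-\lr{\beta_1,\beta_2}$; here the $\lr{\beta_i,\beta_j}$ contributions cancel and leave $-\lr{\beta_2,\alpha_1}+\lr{\beta_1,\alpha_2}$, as claimed.

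I expect no serious obstacle, since the computation is routine once the substitution $\widetilde{E}'=\widetilde{E}+\widetilde{B}$ is made. The one point demanding care is that Lemma \ref{eulerjs}$(1)$ only evaluates $\Lambda(\widetilde{E}-,\widetilde{B}-)$ with the $\widetilde{E}$-argument in the first slot, so each reversed mixed term must be rewritten using the skew-symmetry of $\Lambda$ before the lemma applies; this is where sign errors are most likely to arise, so I would track the order of arguments explicitly at each step.
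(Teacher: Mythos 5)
Your proposal is correct and follows essentially the same route as the paper's proof: both parts substitute $\widetilde{E}'=\widetilde{E}+\widetilde{B}$ (in part (2) grouping the arguments as $\widetilde{E}(\alpha_i+\beta_i)+\widetilde{B}\beta_i$), expand by bilinearity, and evaluate the mixed and $\widetilde{B}$-$\widetilde{B}$ terms via Lemma \ref{eulerjs} together with the skew-symmetry of $\Lambda$, after which the Euler-form corrections cancel exactly as you describe.
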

\begin{proof}$(1)$ By definition,
\begin{flalign*}
\Lambda(\widetilde{E}'\alpha,\widetilde{E}'\beta)&=\Lambda(\widetilde{B}\alpha+\widetilde{E}\alpha,\widetilde{B}\beta+\widetilde{E}\beta)\\
&=\Lambda(\widetilde{B}\alpha,\widetilde{B}\beta)+\Lambda(\widetilde{B}\alpha,\widetilde{E}\beta)+\Lambda(\widetilde{E}\alpha,\widetilde{B}\beta)+\Lambda(\widetilde{E}\alpha,\widetilde{E}\beta)\\
&=\lr{\beta,\alpha}-\lr{\alpha,\beta}+\lr{\alpha,\beta}-\lr{\beta,\alpha}+\Lambda(\widetilde{E}\alpha,\widetilde{E}\beta)\\
&=\Lambda(\widetilde{E}\alpha,\widetilde{E}\beta).
\end{flalign*}
$(2)$ By definition,
\begin{flalign*}
\Lambda(\widetilde{E}\alpha_1+\widetilde{E}'\beta_1,\widetilde{E}\alpha_2+\widetilde{E}'\beta_2)&=
\Lambda(\widetilde{E}(\alpha_1+\beta_1)+\widetilde{B}\beta_1,\widetilde{E}(\alpha_2+\beta_2)+\widetilde{B}\beta_2)\\&=
\Lambda(\widetilde{E}(\alpha_1+\beta_1),\widetilde{E}(\alpha_2+\beta_2))+\Lambda(\widetilde{E}(\alpha_1+\beta_1),\widetilde{B}\beta_2)\\&\quad+
\Lambda(\widetilde{B}\beta_1,\widetilde{E}(\alpha_2+\beta_2))+\Lambda(\widetilde{B}\beta_1,\widetilde{B}\beta_2)\\
&=\Lambda(\widetilde{E}(\alpha_1+\beta_1),\widetilde{E}(\alpha_2+\beta_2))-\lr{\beta_2,\alpha_1+\beta_1}+\lr{\beta_1,\alpha_2+\beta_2}\\&\quad+\lr{\beta_2,\beta_1}-\lr{\beta_1,\beta_2}\\
&=\Lambda(\widetilde{E}(\alpha_1+\beta_1),\widetilde{E}(\alpha_2+\beta_2))-\lr{\beta_2,\alpha_1}+\lr{\beta_1,\alpha_2}.
\end{flalign*}
\end{proof}

\section{Factorization of a homomorphism}
Let $Q$, $\widetilde{Q}$ be the same as given in Section 5. We take $\A$ to be the category of finite dimensional left $\mathfrak{S}$-modules, and $m=2n$. Recall that for each $\alpha\in \mathbb{Z}^{n}$, $\widetilde{\alpha}:={\mathbf{0}\choose\alpha}
\in \mathbb{Z}^{m}$.

By \cite[Lemma 8.1]{DXZ}, the algebra $\mathcal{MH}(\A)$ is $\mathbb{Z}^{m}$-graded with the degree of $K_{\alpha}\ast\mathbb{X}_{M\oplus P[1]}$ defined by $$\deg(K_{\alpha}\ast\mathbb{X}_{M\oplus P[1]}):=\widetilde{E}'(\bf{m}-\bf{p})-\widetilde{\alpha}$$ for any $\alpha\in \mathbb{Z}^n, M\in\A$ and $P\in\P$. It induces a grading on the tensor algebra $(\M\H(\A)\otimes\M\H(\A),\ast)$ defined by
$$\deg(K_{\alpha}\ast\mathbb{X}_{M\oplus P[1]}\otimes K_{\beta}\ast\mathbb{X}_{N\oplus Q[1]}):=\widetilde{E}'(\bf{m}-\bf{p}+\bf{n}-\bf{q})-\widetilde{\alpha}-\widetilde{\beta}$$ for any $\alpha, \beta\in \mathbb{Z}^n$, $M, N\in\A$ and $P, Q\in\P$.
Using these gradings, we twist the multiplication on $\M\H(\A)$, and define $\mathcal{MH}_{\Lambda}(\A)$ to be the same module as $\mathcal{MH}(\A)$ but with the twisted
multiplication defined on basis elements by
\begin{equation}\label{labuda}
\begin{split}
   &(K_{\alpha}\ast\mathbb{X}_{M\oplus P[1]})\star (K_{\beta}\ast\mathbb{X}_{N\oplus Q[1]}):=\\&
   v^{\Lambda(\widetilde{E}'({\bf m}-{\bf p})-\widetilde{\alpha},\widetilde{E}'(\bf{n}
   -\bf{q})-\widetilde{\beta})}
   (K_{\alpha}\ast\mathbb{X}_{M\oplus P[1]})\ast (K_{\beta}\ast\mathbb{X}_{N\oplus Q[1]}),\end{split}
\end{equation}
where $\alpha,\beta\in \mathbb{Z}^{n}$, $M, N\in\A$ and $P, Q\in\P$. We also twist the multiplication on the tensor algebra $(\M\H(\A)\otimes\M\H(\A),\ast)$ by defining
\begin{equation}\begin{split}&(K_{\alpha}\ast\mathbb{X}_{M\oplus P[1]}\otimes K_{\beta}\ast\mathbb{X}_{N\oplus Q[1]})\star(K_{\gamma}\ast\mathbb{X}_{U\oplus S[1]}\otimes K_{\delta}\ast\mathbb{X}_{V\oplus T[1]}):=\\&
v^{\lambda}
(K_{\alpha}\ast\mathbb{X}_{M\oplus P[1]}\otimes K_{\beta}\ast\mathbb{X}_{N\oplus Q[1]})\ast(K_{\gamma}\ast\mathbb{X}_{U\oplus S[1]}\otimes K_{\delta}\ast\mathbb{X}_{V\oplus T[1]}),
\end{split}
\end{equation}where $\lambda=\Lambda(\widetilde{E}'({\bf m}-{\bf p}+{\bf n}-{\bf q})-\widetilde{\alpha}-\widetilde{\beta},\widetilde{E}'(\bf{u}-\bf{s}+\bf{v}-\bf{t})-\widetilde{\gamma}-\widetilde{\delta})$, $M, N,U,V\in\A$, $P, Q,S,T\in\P$ and $\alpha,\beta,\gamma,\delta\in \mathbb{Z}^{n}$.

For use below we reformulate Theorem \ref{ydygx} in the following
\begin{proposition}\label{tdygx}
The algebra $\mathcal{MH}_{\Lambda}(\A)$ is generated by all $K_{\alpha}$ and $\mathbb{X}_{M\oplus P[1]}$ $($with $\alpha\in\mathbb{Z}^{n}$, $M\in\A$ and $P\in\P$$)$,
which are subject to the following relations
\begin{flalign}&K_{\alpha}\star K_{\beta}=v^{\Lambda(\widetilde{\alpha},\widetilde{\beta})}K_{\alpha+\beta}=q^{\Lambda(\widetilde{\alpha},\widetilde{\beta})}K_{\beta}\star K_{\alpha}\label{x1};\\
&K_{\alpha}\star\mathbb{X}_{M\oplus P[1]}
    =q^{-\Lambda(\widetilde{\alpha},\widetilde{E}'({\bf m}-{\bf p}))}\mathbb{X}_{M\oplus P[1]}\star K_{\alpha}\label{x2};\\
&\mathbb{X}_{P[1]}\star \mathbb{X}_{Q[1]}=
v^{\Lambda(\widetilde{E}'{\bf p},\widetilde{E}'\bf{q}
)}\mathbb{X}_{(P\oplus Q)[1]}
=q^{\Lambda(\widetilde{E}'{\bf p},\widetilde{E}'\bf{q})}\mathbb{X}_{Q[1]}\star\mathbb{X}_{P[1]}\label{x3};\\
&\mathbb{X}_{M}\star\mathbb{X}_{N}=q^{\frac{1}{2}\Lambda(\widetilde{E}'{\bf m},
\widetilde{E}'\bf{n})+\langle\bf{m},\bf{n}\rangle}\sum_{[L]}\frac{|\mathrm{Ext}_{\A}^{1}(M,N)_{L}|}{|\mathrm{Hom}_{\A}(M,N)|}\mathbb{X}_L\label{x4};\\
&\mathbb{X}_{M}\star\mathbb{X}_{P[1]}=v^{-\Lambda(\widetilde{E}'{\bf m},\widetilde{E}'{\bf p})}\mathbb{X}_{M\oplus
P[1]}\label{x5};\\
&\mathbb{X}_{P[1]}\star\mathbb{X}_{M}=q^{-\frac{1}{2}\Lambda(\widetilde{E}'{\bf p},\widetilde{E}'{\bf m})-\lr{{\bf p},{\bf m}}}
\sum\limits_{[B],[Q]}|{}_Q\Hom_{\A}(P,M)_B|\mathbb{X}_{B\oplus Q[1]}\label{x6};\end{flalign}
for any $\alpha,\beta\in\mathbb{Z}^n$, $M,N\in\A$ and $P,Q\in\P$.
\end{proposition}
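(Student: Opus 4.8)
The strategy is to deduce each relation in Proposition~\ref{tdygx} from the corresponding relation in Theorem~\ref{ydygx} by inserting the appropriate power of $v$ coming from the twist~(\ref{labuda}). The key bookkeeping device is the degree function $\deg(K_{\alpha}\ast\mathbb{X}_{M\oplus P[1]})=\widetilde{E}'({\bf m}-{\bf p})-\widetilde{\alpha}$; for any two basis elements $x,y$ one has, by definition, $x\star y=v^{\Lambda(\deg x,\deg y)}\,x\ast y$. Thus the whole proof reduces to computing $\Lambda(\deg x,\deg y)$ on the relevant generators and checking that, after multiplying the untwisted relation~(\ref{h1})--(\ref{h6}) by the resulting scalar, one recovers~(\ref{x1})--(\ref{x6}). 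Note that $\deg K_{\alpha}=-\widetilde{\alpha}$, $\deg\mathbb{X}_{M}=\widetilde{E}'{\bf m}$, and $\deg\mathbb{X}_{P[1]}=-\widetilde{E}'{\bf p}$, so the scalars appearing are $\Lambda(\widetilde{\alpha},\widetilde{\beta})$, $\Lambda(\widetilde{\alpha},\widetilde{E}'({\bf m}-{\bf p}))$, $\Lambda(\widetilde{E}'{\bf p},\widetilde{E}'{\bf q})$, $\Lambda(\widetilde{E}'{\bf m},\widetilde{E}'{\bf n})$, $\Lambda(\widetilde{E}'{\bf m},\widetilde{E}'{\bf p})$, and $\Lambda(\widetilde{E}'{\bf p},\widetilde{E}'{\bf m})$, in the six respective cases.

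First I would dispose of the easy relations. For~(\ref{x1}) the twist of $K_{\alpha}\ast K_{\beta}=K_{\alpha+\beta}$ gives the prefactor $v^{\Lambda(\widetilde{\alpha},\widetilde{\beta})}$ directly, and skew-symmetry of $\Lambda$ yields the second equality with exponent $q^{\Lambda(\widetilde{\alpha},\widetilde{\beta})}=v^{2\Lambda(\widetilde{\alpha},\widetilde{\beta})}$ after swapping. Relations~(\ref{x2}), (\ref{x3}) and (\ref{x5}) are handled the same way, since their untwisted counterparts~(\ref{h2}),(\ref{h3}),(\ref{h5}) have a single term on the right: one simply reads off $\Lambda(\deg x,\deg y)$ and, where two orderings are compared, uses skew-symmetry to convert $v$ into $q$ in the exponent.

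The genuinely delicate relations are~(\ref{x4}) and~(\ref{x6}), because their right-hand sides are sums over $[L]$ (resp.\ $[B],[Q]$) and one must verify that the twisting scalar is \emph{constant} along the sum. Here the grading is essential: every term $\mathbb{X}_L$ occurring in~(\ref{h4}) has $\Dim L={\bf m}+{\bf n}$ by conservation of dimension vectors in short exact sequences, so $\deg\mathbb{X}_L=\widetilde{E}'({\bf m}+{\bf n})$ is independent of $[L]$; likewise every $\mathbb{X}_{B\oplus Q[1]}$ in~(\ref{h6}) satisfies ${\bf b}-{\bf q}={\bf m}-{\bf p}$. This homogeneity guarantees that the twist factors out of the sum. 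The remaining task is the identity $\tfrac12\Lambda(\widetilde{E}'{\bf m},\widetilde{E}'{\bf n})+\langle{\bf m},{\bf n}\rangle=\langle M,N\rangle+\Lambda(\deg\mathbb{X}_M,\deg\mathbb{X}_N)$, i.e.\ reconciling the symmetric Euler term produced by~(\ref{h4}) with the $\Lambda$-exponent; for~(\ref{x6}) one similarly matches $-\tfrac12\Lambda(\widetilde{E}'{\bf p},\widetilde{E}'{\bf m})-\langle{\bf p},{\bf m}\rangle$ against $-\langle P,M\rangle+\Lambda(\deg\mathbb{X}_{P[1]},\deg\mathbb{X}_M)$. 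I expect these two scalar identities to be the main obstacle, and I would establish them using Lemma~\ref{xq0}(1) to pass freely between $\widetilde{E}'$ and $\widetilde{E}$, together with Lemma~\ref{eulerjs} to rewrite $\Lambda(\widetilde{E}\cdot,\widetilde{E}\cdot)$ in terms of the Euler form; the symmetric combination $\tfrac12(\langle{\bf m},{\bf n}\rangle+\langle{\bf n},{\bf m}\rangle)=\tfrac12({\bf m},{\bf n})$ is precisely what absorbs the factor of $\tfrac12$, so the bookkeeping should close once these lemmas are applied.
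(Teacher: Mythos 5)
Your overall strategy coincides with the paper's: Proposition \ref{tdygx} is stated there without proof, as an immediate reformulation of Theorem \ref{ydygx} obtained by inserting the twist (\ref{labuda}) into each of the relations (\ref{h1})--(\ref{h6}). Your identification of the degrees $\deg K_{\alpha}=-\widetilde{\alpha}$, $\deg\mathbb{X}_{M}=\widetilde{E}'{\bf m}$, $\deg\mathbb{X}_{P[1]}=-\widetilde{E}'{\bf p}$ and your treatment of (\ref{x1}), (\ref{x2}), (\ref{x3}) and (\ref{x5}) are correct.

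Your last paragraph, however, misidentifies where the content lies, and the key identity you propose to prove is false as written. First, the worry that the twisting scalar must be ``constant along the sum'' in (\ref{h4}) and (\ref{h6}) is moot: by (\ref{labuda}) the factor $v^{\Lambda(\deg x,\deg y)}$ is attached to the pair of basis elements being multiplied, not to the terms in the basis expansion of $x\ast y$, so each relation is simply multiplied through by a single scalar. (The homogeneity you observe is what makes the graded twist compatible with associativity, but it plays no role in deriving the relations.) Second, reading both sides as exponents of $q$, your proposed identity compares $\frac{1}{2}\Lambda(\widetilde{E}'{\bf m},\widetilde{E}'{\bf n})+\lr{{\bf m},{\bf n}}$ with $\lr{M,N}+\Lambda(\widetilde{E}'{\bf m},\widetilde{E}'{\bf n})$; these differ by $\frac{1}{2}\Lambda(\widetilde{E}'{\bf m},\widetilde{E}'{\bf n})$, which is nonzero in general. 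The discrepancy comes from mixing $v$-exponents with $q$-exponents: the twist contributes $v^{\Lambda(\widetilde{E}'{\bf m},\widetilde{E}'{\bf n})}=q^{\frac{1}{2}\Lambda(\widetilde{E}'{\bf m},\widetilde{E}'{\bf n})}$, and since $\lr{{\bf m},{\bf n}}=\lr{M,N}$ under the identification of $K(\A)$ with $\mathbb{Z}^n$, relations (\ref{x4}) and (\ref{x6}) follow from (\ref{h4}) and (\ref{h6}) at once, with nothing left to verify. In particular, no appeal to Lemma \ref{xq0} or Lemma \ref{eulerjs} is needed here (those lemmas are used later, e.g.\ in Proposition \ref{ts3} and Corollary \ref{cfgs}), and there is no ``symmetric Euler term'' anywhere in (\ref{h4}) or (\ref{x4}) to absorb the factor $\frac{1}{2}$: that factor is purely the conversion $v=q^{1/2}$.
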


Since the comultiplication $\Delta$ defined in (\ref{yucheng}) is homogeneous, it is easy to obtain the following
\begin{lemma}\label{ts1}
The map $\Delta:(\M\H_\Lambda(\A),\star)\longrightarrow (\M\H(\A)\otimes\M\H(\A),\star)$ is a homomorphism of algebras.
\end{lemma}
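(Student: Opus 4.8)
The plan is to exploit the fact that Proposition~\ref{sdsjg} already establishes $\Delta:(\M\H(\A),\ast)\to(\M\H(\A)\otimes\M\H(\A),\ast)$ as a homomorphism of algebras, and then to track how the $\Lambda$-twist modifies both multiplications. Since $\M\H_\Lambda(\A)$ is the same module as $\M\H(\A)$ with the twisted product $\star$ of \eqref{labuda}, and the tensor algebra likewise carries the twisted product $\star$, the content of the lemma is purely a compatibility statement between the twist factors: I must show that the scalar $v^{\Lambda(\cdot,\cdot)}$ appearing when multiplying two elements in $\M\H_\Lambda(\A)$ matches the scalar that appears after applying $\Delta$ and multiplying in the tensor algebra. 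First I would record that $\Delta$ is homogeneous for the $\mathbb{Z}^m$-grading: inspecting \eqref{yucheng}, a summand $\X_M\otimes K_\alpha\ast\X_{N\oplus P[1]}$ is nonzero only when $F_{MN}^L\neq 0$, so $\Dim L=\Dim M+\Dim N$ as dimension vectors, whence $\deg(\X_M)+\deg(K_\alpha\ast\X_{N\oplus P[1]})=\widetilde{E}'\mathbf{m}+(\widetilde{E}'(\mathbf{n}-\mathbf{p})-\widetilde\alpha)=\widetilde{E}'(\mathbf{l}-\mathbf{p})-\widetilde\alpha=\deg(K_\alpha\ast\X_{L\oplus P[1]})$, using $\mathbf{m}+\mathbf{n}=\mathbf{l}$.

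With homogeneity in hand, the computation reduces to a direct comparison. Given two homogeneous elements $x,y\in\M\H_\Lambda(\A)$ of degrees $\mu,\nu\in\mathbb{Z}^m$, the definition \eqref{labuda} gives $x\star y=v^{\Lambda(\mu,\nu)}\,x\ast y$, so
\[
\Delta(x\star y)=v^{\Lambda(\mu,\nu)}\Delta(x\ast y)=v^{\Lambda(\mu,\nu)}\Delta(x)\ast\Delta(y),
\]
where the last equality is Proposition~\ref{sdsjg}. On the other hand, $\Delta(x)$ is homogeneous of total degree $\mu$ and $\Delta(y)$ of total degree $\nu$ for the induced tensor-grading, so by the definition of $\star$ on the tensor algebra we have $\Delta(x)\star\Delta(y)=v^{\Lambda(\mu,\nu)}\Delta(x)\ast\Delta(y)$. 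Comparing the two displays yields $\Delta(x\star y)=\Delta(x)\star\Delta(y)$, which is exactly the assertion. The only remaining point is to verify that $\Delta$ lands in the homogeneous component of the correct total degree and that the twist exponent $\lambda$ in the tensor-algebra product depends only on the total degrees $\mu,\nu$ — both of which are immediate from the degree formulas recorded just before Proposition~\ref{tdygx}.

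I would carry this out by reducing to basis elements $x=K_\alpha\ast\X_{L\oplus P[1]}$, computing $\deg x=\widetilde{E}'(\mathbf{l}-\mathbf{p})-\widetilde\alpha=:\mu$, and checking term by term in \eqref{yucheng} that every summand of $\Delta(x)$ has total tensor-degree equal to $\mu$; the homogeneity argument above then applies verbatim. The main (indeed the only) obstacle is the bookkeeping of dimension vectors in the grading formula: one must be careful that the identification $\mathbf{m}+\mathbf{n}=\mathbf{l}$ holds at the level of the vectors $\Dim$ used in the definition of $\deg$, and that $\widetilde{E}'$ is applied to the correct combination, since a sign or a mismatched projective part would break the matching of twist scalars. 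Once the homogeneity of $\Delta$ is confirmed, no genuine computation involving $\Lambda$ is needed, because both twists are governed by the identical bilinear pairing $\Lambda$ evaluated on the same total degrees, so the factorization is formal.
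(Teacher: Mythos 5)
Your proposal is correct and follows essentially the same route as the paper: the paper's proof likewise reduces to Proposition~\ref{sdsjg} on basis elements, using the definition (\ref{labuda}) of $\star$ to pull out the scalar $v^{\Lambda(\deg x,\deg y)}$ and then reabsorbing it via the twisted tensor product, which is justified precisely by the homogeneity of $\Delta$ that the paper asserts just before the lemma. Your write-up merely makes that homogeneity check (via $\mathbf{m}+\mathbf{n}=\mathbf{l}$ when $F_{MN}^L\neq 0$) explicit, which the paper leaves as an easy remark.
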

\begin{proof}
Let $M, N\in\A$, $P, Q\in\P$ and $\alpha,\beta\in \mathbb{Z}^{n}$. By (\ref{labuda}), we have that
\begin{flalign*}&\Delta[(K_{\alpha}\ast\mathbb{X}_{M\oplus P[1]})\star (K_{\beta}\ast\mathbb{X}_{N\oplus Q[1]})]\\&=
v^{\Lambda(\widetilde{E}'({\bf m}-{\bf p})-\widetilde{\alpha},\widetilde{E}'(\bf{n}
   -\bf{q})-\widetilde{\beta})}
   \Delta[(K_{\alpha}\ast\mathbb{X}_{M\oplus P[1]})\ast (K_{\beta}\ast\mathbb{X}_{N\oplus Q[1]})]\\&=v^{\Lambda(\widetilde{E}'({\bf m}-{\bf p})-\widetilde{\alpha},\widetilde{E}'(\bf{n}
   -\bf{q})-\widetilde{\beta})}
   \Delta(K_{\alpha}\ast\mathbb{X}_{M\oplus P[1]})\ast \Delta(K_{\beta}\ast\mathbb{X}_{N\oplus Q[1]})\quad \text{(by~Proposition~\ref{sdsjg})}\\
   &=\Delta(K_{\alpha}\ast\mathbb{X}_{M\oplus P[1]})\star \Delta(K_{\beta}\ast\mathbb{X}_{N\oplus Q[1]}).
\end{flalign*}
\end{proof}

We twist the multiplication on the tensor algebra of the torus $\mathcal{T}$ defined in (\ref{jhtorus}) by defining
\begin{equation}\label{taozl}\begin{split}
&(X^{\alpha_1\choose\alpha_2}\otimes X^{\beta_1\choose\beta_2})\star(X^{\gamma_1\choose\gamma_2}\otimes X^{\delta_1\choose\delta_2}):=\\&
q^{\frac{1}{2}\Lambda(\widetilde{E}'(\alpha_1+\beta_1)-\widetilde{\alpha}_2-\widetilde{\beta}_2,\widetilde{E}'(\gamma_1+\delta_1)-\widetilde{\gamma}_2-\widetilde{\delta}_2)+(\beta_1,\gamma_1)+\lr{\alpha_1,\delta_1}}
X^{\alpha_1+\gamma_1\choose\alpha_2+\gamma_2}\otimes X^{\beta_1+\delta_1\choose\beta_2+\delta_2}
\end{split}
\end{equation}
for any $\alpha_i, \beta_i, \gamma_i, \delta_i\in\mathbb{Z}^n, i=1, 2.$
\begin{lemma}\label{ts2}
The map $\int\otimes\int: (\M\H(\A)\otimes\M\H(\A),\star)\longrightarrow(\mathcal{T}\otimes\mathcal{T},\star)$ is a homomorphism of algebras.
\end{lemma}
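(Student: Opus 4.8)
The plan is to verify the homomorphism property directly on a pair of basis elements, and to observe that the twist (\ref{taozl}) on $\mathcal{T}\otimes\mathcal{T}$ has been engineered precisely so that the two sides agree term by term; since $\int\otimes\int$ and both $\star$-products are $\ZZ[v,v^{-1}]$-bilinear, it suffices to treat basis elements. First I would fix
\[
x=K_{\alpha}\ast\mathbb{X}_{M\oplus P[1]}\otimes K_{\beta}\ast\mathbb{X}_{N\oplus Q[1]},\qquad y=K_{\gamma}\ast\mathbb{X}_{U\oplus S[1]}\otimes K_{\delta}\ast\mathbb{X}_{V\oplus T[1]}.
\]
Applying the Corollary following Proposition~\ref{jfys1} in each tensor slot gives $(\int\otimes\int)(x)=X^{\bar{\mathbf{m}}-\bar{\mathbf{p}}+\widetilde{\alpha}}\otimes X^{\bar{\mathbf{n}}-\bar{\mathbf{q}}+\widetilde{\beta}}=X^{\mathbf{m}-\mathbf{p}\choose\alpha}\otimes X^{\mathbf{n}-\mathbf{q}\choose\beta}$, and similarly for $y$. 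I would then evaluate the right-hand side $(\int\otimes\int)(x)\star(\int\otimes\int)(y)$ by a single application of (\ref{taozl}) with $\alpha_1=\mathbf{m}-\mathbf{p},\ \alpha_2=\alpha$, $\beta_1=\mathbf{n}-\mathbf{q},\ \beta_2=\beta$, $\gamma_1=\mathbf{u}-\mathbf{s},\ \gamma_2=\gamma$, $\delta_1=\mathbf{v}-\mathbf{t},\ \delta_2=\delta$, recording both the resulting monomial $X^{\mathbf{m}-\mathbf{p}+\mathbf{u}-\mathbf{s}\choose\alpha+\gamma}\otimes X^{\mathbf{n}-\mathbf{q}+\mathbf{v}-\mathbf{t}\choose\beta+\delta}$ and its scalar exponent.

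For the left-hand side $(\int\otimes\int)(x\star y)$ I would use the definition of the twisted product $\star$ on $(\M\H(\A)\otimes\M\H(\A))$ from Section~6, namely $x\star y=v^{\lambda}(x\ast y)$ with $\lambda=\Lambda(\widetilde{E}'(\mathbf{m}-\mathbf{p}+\mathbf{n}-\mathbf{q})-\widetilde{\alpha}-\widetilde{\beta},\ \widetilde{E}'(\mathbf{u}-\mathbf{s}+\mathbf{v}-\mathbf{t})-\widetilde{\gamma}-\widetilde{\delta})$, and pull the scalar $v^{\lambda}$ out by linearity. Expanding $x\ast y$ with the $\ast$-product on the tensor algebra defined in Section~3 produces the prefactor $q^{(\hat{N}-\hat{Q},\hat{U}-\hat{S})+\lr{\hat{M}-\hat{P},\hat{V}-\hat{T}}}$ together with honest products $\mathbb{X}_{M\oplus P[1]}\ast\mathbb{X}_{U\oplus S[1]}$ and $\mathbb{X}_{N\oplus Q[1]}\ast\mathbb{X}_{V\oplus T[1]}$ inside the two slots. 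Because $\int$ is an algebra homomorphism (the same Corollary) and $\mathcal{T}$ multiplies monomials by adding exponents, each slot collapses to the very monomial already found on the right-hand side, under the identifications $\hat{N}-\hat{Q}=\mathbf{n}-\mathbf{q}$, $\hat{U}-\hat{S}=\mathbf{u}-\mathbf{s}$, and so on.

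It then remains only to match the scalars. Using $v=\sqrt{q}$, the left-hand exponent equals $\frac{1}{2}\lambda+(\mathbf{n}-\mathbf{q},\mathbf{u}-\mathbf{s})+\lr{\mathbf{m}-\mathbf{p},\mathbf{v}-\mathbf{t}}$, while reading (\ref{taozl}) off with the substitutions above yields exactly $\frac{1}{2}\Lambda(\widetilde{E}'(\mathbf{m}-\mathbf{p}+\mathbf{n}-\mathbf{q})-\widetilde{\alpha}-\widetilde{\beta},\ \widetilde{E}'(\mathbf{u}-\mathbf{s}+\mathbf{v}-\mathbf{t})-\widetilde{\gamma}-\widetilde{\delta})+(\mathbf{n}-\mathbf{q},\mathbf{u}-\mathbf{s})+\lr{\mathbf{m}-\mathbf{p},\mathbf{v}-\mathbf{t}}$. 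Since $\lambda$ is precisely this $\Lambda$-pairing, the two exponents coincide identically. I expect essentially no genuine obstacle here: no structural identity (e.g.\ Lemma~\ref{xq0}) is needed, and the only real care is bookkeeping—correctly matching the indices $\alpha_i,\beta_i,\gamma_i,\delta_i$ in (\ref{taozl}) with the $K$-group classes and dimension vectors produced by $\int$, and keeping the factor $\frac{1}{2}$ coming from $v=\sqrt{q}$ consistent across the two twists.
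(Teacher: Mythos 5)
Your proposal is correct and follows essentially the same route as the paper: both verify the identity on basis elements, use the Corollary to Proposition~\ref{jfys1} to collapse each tensor slot, and match the scalar exponents, which agree because the twist (\ref{taozl}) was defined exactly so that $\frac{1}{2}\lambda+(\mathbf{n}-\mathbf{q},\mathbf{u}-\mathbf{s})+\lr{\mathbf{m}-\mathbf{p},\mathbf{v}-\mathbf{t}}$ appears on both sides. Your observation that no structural identity such as Lemma~\ref{xq0} is needed is also accurate; that lemma enters only in the proof of Proposition~\ref{ts3}.
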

\begin{proof}
Let $M, N,U,V\in\A$, $P, Q,S,T\in\P$ and $\alpha,\beta,\gamma,\delta\in \mathbb{Z}^{n}$. On the one hand,
\begin{flalign*}
&\int\otimes\int[(K_{\alpha}\ast\mathbb{X}_{M\oplus P[1]}\otimes K_{\beta}\ast\mathbb{X}_{N\oplus Q[1]})\star(K_{\gamma}\ast\mathbb{X}_{U\oplus S[1]}\otimes K_{\delta}\ast\mathbb{X}_{V\oplus T[1]})]\\&
=v^{a_0}\int\otimes\int[(K_{\alpha}\ast\mathbb{X}_{M\oplus P[1]}\otimes K_{\beta}\ast\mathbb{X}_{N\oplus Q[1]})\ast(K_{\gamma}\ast\mathbb{X}_{U\oplus S[1]}\otimes K_{\delta}\ast\mathbb{X}_{V\oplus T[1]})]\\&
=v^{a_1}\int\otimes\int[(K_{\alpha+\gamma}\ast \mathbb{X}_{M\oplus P[1]}\ast\mathbb{X}_{U\oplus S[1]})\otimes
(K_{\beta+\delta}\ast \mathbb{X}_{N\oplus Q[1]}\ast\mathbb{X}_{V\oplus T[1]})]\\&
=v^{a_1}X^{\widetilde{\alpha}+\widetilde{\gamma}+\bar{\mathbf{m}}-\bar{\mathbf{p}}+\bar{\mathbf{u}}-\bar{\mathbf{s}}}\otimes
X^{\widetilde{\beta}+\widetilde{\delta}+\bar{\mathbf{n}}-\bar{\mathbf{q}}+\bar{\mathbf{v}}-\bar{\mathbf{p}}},
\end{flalign*}
where $a_0=\Lambda(\widetilde{E}'({\bf m}-{\bf p}+{\bf n}-{\bf q})-\widetilde{\alpha}-\widetilde{\beta},\widetilde{E}'(\bf{u}-\bf{s}+\bf{v}-\bf{t})-\widetilde{\gamma}-\widetilde{\delta})$
and $a_1=a_0+2({\bf n}-{\bf q},{\bf u}-{\bf s})+2\lr{\bf{m}-\bf{p},\bf{v}-\bf{t}}$.

On the other hand, \begin{flalign*}
&\int\otimes\int(K_{\alpha}\ast\mathbb{X}_{M\oplus P[1]}\otimes K_{\beta}\ast\mathbb{X}_{N\oplus Q[1]})\star\int\otimes\int(K_{\gamma}\ast\mathbb{X}_{U\oplus S[1]}\otimes K_{\delta}\ast\mathbb{X}_{V\oplus T[1]})\\&
=(X^{\widetilde{\alpha}+\bar{\bf{m}}-\bar{\bf{p}}}\otimes X^{\widetilde{\beta}+\bar{\bf{n}}-\bar{\bf{q}}})\star
(X^{\widetilde{\gamma}+\bar{\bf{u}}-\bar{\bf{s}}}\otimes X^{\widetilde{\delta}+\bar{\bf{v}}-\bar{\bf{t}}})\\&
=v^{a_1}X^{\widetilde{\alpha}+\widetilde{\gamma}+\bar{\mathbf{m}}-\bar{\mathbf{p}}+\bar{\mathbf{u}}-\bar{\mathbf{s}}}\otimes
X^{\widetilde{\beta}+\widetilde{\delta}+\bar{\mathbf{n}}-\bar{\mathbf{q}}+\bar{\mathbf{v}}-\bar{\mathbf{p}}}.\quad(\text{by~the~definition~in~(\ref{taozl}))}
\end{flalign*}
\end{proof}

Define the quantum torus $\mathcal{T}_\Lambda$ to be the $\ZZ[v,v^{-1}]$-algebra with a basis $\{X^{\alpha}~|~\alpha\in \mathbb{Z}^{m}\}$ and
multiplication defined by
\begin{equation}\label{ljhtorus}X^{\alpha}\star X^{\beta}=v^{\Lambda(\alpha,\beta)}X^{\alpha+\beta}.\end{equation}

For the need of the sequel proof, we give the following
\begin{lemma}\label{xq}
For any $\alpha,\beta\in\mathbb{Z}^n$, we have that
\vspace{0.8em}

$(1)$~~$\Lambda(\widetilde{E}'\alpha,\widetilde{\beta})=\Lambda(\widetilde{E}\alpha,\widetilde{\beta})$;\quad
$(2)$~~$\Lambda(\widetilde{\alpha},\widetilde{E}'\beta)=\Lambda(\widetilde{\alpha},\widetilde{E}\beta)$.
\end{lemma}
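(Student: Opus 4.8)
The plan is to reduce both equalities to a single vanishing statement that follows directly from the compatibility condition (\ref{compatible}). Since $\widetilde{E}'-\widetilde{E}=\widetilde{B}$ by construction, bilinearity of $\Lambda$ turns the difference of the two sides of $(1)$ into $\Lambda(\widetilde{E}'\alpha,\widetilde{\beta})-\Lambda(\widetilde{E}\alpha,\widetilde{\beta})=\Lambda(\widetilde{B}\alpha,\widetilde{\beta})$, and likewise the difference of the two sides of $(2)$ into $\Lambda(\widetilde{\alpha},\widetilde{B}\beta)$. Thus it suffices to prove $\Lambda(\widetilde{B}\alpha,\widetilde{\beta})=0$ and $\Lambda(\widetilde{\alpha},\widetilde{B}\beta)=0$ for all $\alpha,\beta\in\mathbb{Z}^n$.

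First I would rewrite (\ref{compatible}) as $\Lambda\widetilde{B}={-D_n\choose\mathbf{0}}$, an $m\times n$ matrix (recall $m=2n$) whose lower $n\times n$ block is zero. For $(2)$ this gives at once $\Lambda(\widetilde{\alpha},\widetilde{B}\beta)=\widetilde{\alpha}^{tr}\,(\Lambda\widetilde{B})\,\beta=\widetilde{\alpha}^{tr}{-D_n\beta\choose\mathbf{0}}$, which vanishes because $\widetilde{\alpha}={\mathbf{0}\choose\alpha}$ has its upper $n$ coordinates equal to zero while the column $\Lambda\widetilde{B}\beta$ is supported on those upper coordinates. For $(1)$ I would transpose the compatibility relation, using the skew-symmetry $\Lambda^{tr}=-\Lambda$, to obtain $\widetilde{B}^{tr}\Lambda=(D_n\ \mathbf{0})$; then $\Lambda(\widetilde{B}\alpha,\widetilde{\beta})=\alpha^{tr}\,(\widetilde{B}^{tr}\Lambda)\,\widetilde{\beta}=\alpha^{tr}(D_n\ \mathbf{0})\widetilde{\beta}=0$, since $(D_n\ \mathbf{0})\widetilde{\beta}=\mathbf{0}$. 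Equivalently, $(1)$ can be deduced from $(2)$ by skew-symmetry alone, writing $\Lambda(\widetilde{B}\alpha,\widetilde{\beta})=-\Lambda(\widetilde{\beta},\widetilde{B}\alpha)$.

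I do not anticipate any genuine obstacle: the whole content is the interaction between the compatibility of $\Lambda$ with $\widetilde{B}$ and the block shape of the vectors $\widetilde{\alpha},\widetilde{\beta}$, which are supported precisely on the block where ${D_n\choose\mathbf{0}}$ vanishes. The only point that needs mild care is the bookkeeping of transposes and the correct invocation of the skew-symmetry of $\Lambda$ in part $(1)$; after that, each identity is an immediate short matrix computation, running in parallel to---but being simpler than---Lemma~\ref{eulerjs}.
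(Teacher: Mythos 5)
Your proof is correct and follows essentially the same route as the paper: the paper likewise transposes the compatibility condition (via skew-symmetry of $\Lambda$) to get $\widetilde{B}^{tr}\Lambda=(D_n~\mathbf{0})$, writes $(\widetilde{E}')^{tr}\Lambda-(\widetilde{E})^{tr}\Lambda=(D_n~\mathbf{0})$, and kills the extra term $\alpha^{tr}(D_n~\mathbf{0})\widetilde{\beta}$ using the block form of $\widetilde{\beta}$. Your packaging of the two identities as the single vanishing statement $\Lambda(\widetilde{B}\alpha,\widetilde{\beta})=0=\Lambda(\widetilde{\alpha},\widetilde{B}\beta)$ is just a tidier arrangement of the same computation.
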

\begin{proof}
By (\ref{compatible}), we obtain that $(D_n~0)=\widetilde{B}^{\rm tr}\Lambda=(\widetilde{E}')^{\rm tr}\Lambda-(\widetilde{E})^{\rm tr}\Lambda$. Thus,
\begin{flalign*}
\Lambda(\widetilde{E}'\alpha,\widetilde{\beta})&=\alpha^{\rm tr}(\widetilde{E}')^{\rm tr}\Lambda\widetilde{\beta}\\&=\alpha^{\rm tr}(\widetilde{E})^{\rm tr}\Lambda\widetilde{\beta}+\alpha^{\rm tr}(D_n0)\widetilde{\beta}\\
&=\Lambda(\widetilde{E}\alpha,\widetilde{\beta}).
\end{flalign*}
Similarly, we can prove the second identity.
\end{proof}

\begin{proposition}\label{ts3}
The map $\mu: (\mathcal{T}\otimes\mathcal{T},\star)\longrightarrow(\mathcal{T}_\Lambda,\star)$ defined by
$$\mu(X^{\alpha_1\choose\alpha_2}\otimes X^{\beta_1\choose\beta_2})=v^{-(\alpha_1,\beta_1)-\lr{\alpha_1,\beta_1}}X^{-\widetilde{E}\alpha_1-\widetilde{E}'\beta_1+\widetilde{\alpha}_2+\widetilde{\beta}_2},$$
where $\alpha_i,\beta_i\in\mathbb{Z}^n$, $i=1,2$, is a homomorphism of algebras.
\end{proposition}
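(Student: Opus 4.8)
The plan is to verify the homomorphism property directly on basis elements. Since $\mu$ carries each basis vector $X^{\alpha_1\choose\alpha_2}\otimes X^{\beta_1\choose\beta_2}$ to a scalar multiple of a single basis vector of $\mathcal{T}_\Lambda$, and since both the source product (\ref{taozl}) and the target product (\ref{ljhtorus}) add exponent vectors while multiplying scalars, the verification splits into two checks. First, the underlying lattice vectors must agree: both $\mu\circ\star$ and $\star\circ(\mu\otimes\mu)$ land on $X^{a+b}$, where $a=-\widetilde{E}\alpha_1-\widetilde{E}'\beta_1+\widetilde{\alpha}_2+\widetilde{\beta}_2$ and $b=-\widetilde{E}\gamma_1-\widetilde{E}'\delta_1+\widetilde{\gamma}_2+\widetilde{\delta}_2$, which is immediate from additivity of $\widetilde{E}$, $\widetilde{E}'$ and $\widetilde{(\cdot)}$. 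Second, the two resulting $v$-exponents must coincide, which is the real content.

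Recalling $q=v^2$, I would write out both $v$-exponents. Set $\xi=\widetilde{E}'(\alpha_1+\beta_1)-\widetilde{\alpha}_2-\widetilde{\beta}_2$ and $\eta=\widetilde{E}'(\gamma_1+\delta_1)-\widetilde{\gamma}_2-\widetilde{\delta}_2$. Applying $\mu$ after the product (\ref{taozl}) yields the exponent $\Lambda(\xi,\eta)+2(\beta_1,\gamma_1)+2\lr{\alpha_1,\delta_1}-(\alpha_1+\gamma_1,\beta_1+\delta_1)-\lr{\alpha_1+\gamma_1,\beta_1+\delta_1}$, while multiplying the two $\mu$-images in $\mathcal{T}_\Lambda$ yields $\Lambda(a,b)-(\alpha_1,\beta_1)-\lr{\alpha_1,\beta_1}-(\gamma_1,\delta_1)-\lr{\gamma_1,\delta_1}$. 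Expanding $(\cdot,\cdot)$ and $\lr{\cdot,\cdot}$ bilinearly over the sums and using $(\alpha,\beta)=\lr{\alpha,\beta}+\lr{\beta,\alpha}$, the difference of these two exponents collapses and the lemma reduces to the single identity $\Lambda(a,b)=\Lambda(\xi,\eta)+\lr{\beta_1,\gamma_1}-\lr{\delta_1,\alpha_1}$.

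The key step is the evaluation of $\Lambda(a,b)$. Using $\widetilde{E}'-\widetilde{E}=\widetilde{B}$, one rewrites $a=-\xi+\widetilde{B}\alpha_1$ and $b=-\eta+\widetilde{B}\gamma_1$, so bilinearity gives $\Lambda(a,b)=\Lambda(\xi,\eta)-\Lambda(\xi,\widetilde{B}\gamma_1)-\Lambda(\widetilde{B}\alpha_1,\eta)+\Lambda(\widetilde{B}\alpha_1,\widetilde{B}\gamma_1)$. Each cross-term is computed from Lemma \ref{eulerjs}, the skew-symmetry of $\Lambda$, and the derived relation $\Lambda(\widetilde{E}'\alpha,\widetilde{B}\gamma)=\Lambda(\widetilde{E}\alpha,\widetilde{B}\gamma)+\Lambda(\widetilde{B}\alpha,\widetilde{B}\gamma)=-\lr{\alpha,\gamma}$. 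Crucially, the compatibility relation (\ref{compatible}) gives $\Lambda\widetilde{B}=-{D_n\choose\mathbf{0}}$, and since each of $\widetilde{\alpha}_2,\widetilde{\beta}_2,\widetilde{\gamma}_2,\widetilde{\delta}_2$ lies in the bottom $\mathbb{Z}^n$-block, the pairings $\Lambda(\widetilde{\alpha}_2,\widetilde{B}\gamma_1)$ and their analogues all vanish. Feeding this in yields $\Lambda(\xi,\widetilde{B}\gamma_1)=-\lr{\alpha_1,\gamma_1}-\lr{\beta_1,\gamma_1}$, $\Lambda(\widetilde{B}\alpha_1,\eta)=\lr{\gamma_1,\alpha_1}+\lr{\delta_1,\alpha_1}$, and $\Lambda(\widetilde{B}\alpha_1,\widetilde{B}\gamma_1)=\lr{\gamma_1,\alpha_1}-\lr{\alpha_1,\gamma_1}$; after substitution all $\lr{\alpha_1,\gamma_1}$- and $\lr{\gamma_1,\alpha_1}$-terms cancel, leaving exactly $\Lambda(a,b)=\Lambda(\xi,\eta)+\lr{\beta_1,\gamma_1}-\lr{\delta_1,\alpha_1}$, as required.

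The main obstacle is organizational: keeping track of the many scalar contributions, and confirming that the half-integer coefficient $\tfrac12\Lambda(\cdots)$ in (\ref{taozl}), once doubled by $q=v^2$, matches precisely the $\Lambda$-contribution produced by (\ref{ljhtorus}). The one genuinely structural input is the observation that the coefficient-direction components $\widetilde{\alpha}_2,\ldots,\widetilde{\delta}_2$ are annihilated by $\Lambda\widetilde{B}$ via (\ref{compatible}); this is what forces the four-term expansion of $\Lambda(a,b)$ to collapse to $\Lambda(\xi,\eta)$ plus exactly the Euler-form correction dictated by the source product.
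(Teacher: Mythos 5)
Your proof is correct, and its skeleton coincides with the paper's: verify the homomorphism property on basis elements, note that the lattice exponents match by additivity, and reduce the comparison of $v$-exponents to the single identity $\Lambda(a,b)=\Lambda(\xi,\eta)+\lr{\beta_1,\gamma_1}-\lr{\delta_1,\alpha_1}$, which is exactly what the paper establishes (in the equivalent form $\Lambda(\xi,\eta)-\Lambda(a,b)=\lr{\delta_1,\alpha_1}-\lr{\beta_1,\gamma_1}$ at the end of its proof). Where you genuinely differ is in how this identity is proved. The paper expands $\Lambda(\xi,\eta)$ and $\Lambda(a,b)$ in parallel, kills all cross terms involving $\widetilde{\alpha}_2,\widetilde{\beta}_2,\widetilde{\gamma}_2,\widetilde{\delta}_2$ via its Lemma \ref{xq}, and then evaluates the remaining purely $\widetilde{E},\widetilde{E}'$ difference by Lemma \ref{xq0}. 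You instead use the decomposition $a=-\xi+\widetilde{B}\alpha_1$, $b=-\eta+\widetilde{B}\gamma_1$, so that $\Lambda(a,b)$ expands into $\Lambda(\xi,\eta)$ plus three correction terms, each of the form $\Lambda(-,\widetilde{B}\,\cdot\,)$ and hence computable from Lemma \ref{eulerjs} together with the vanishing $\Lambda(\widetilde{\alpha}_2,\widetilde{B}\gamma_1)=0$, which you read off directly from (\ref{compatible}) since the bottom $n\times n$ block of $\Lambda\widetilde{B}$ is zero; all your intermediate evaluations check out, and the $\lr{\alpha_1,\gamma_1}$, $\lr{\gamma_1,\alpha_1}$ terms cancel as you claim. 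The structural inputs are ultimately the same --- your vanishing observation is equivalent to Lemma \ref{xq}, and your derived relation $\Lambda(\widetilde{E}'\alpha,\widetilde{B}\gamma)=-\lr{\alpha,\gamma}$ plays the role of Lemma \ref{xq0} --- but your single four-term expansion is a tidier bookkeeping: it makes transparent that the entire Euler-form correction $\lr{\beta_1,\gamma_1}-\lr{\delta_1,\alpha_1}$ arises from pairing against the $\widetilde{B}$-directions. What the paper's packaging buys instead is reusability: Lemma \ref{xq0} is invoked again later (e.g.\ in Corollary \ref{cfgs}), whereas your inline computation serves only this proposition.
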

\begin{proof}
Let $\alpha_i,\beta_i,\gamma_i,\delta_i\in\mathbb{Z}^n$, $i=1,2$. On the one hand,
\begin{flalign*}\mu[(X^{\alpha_1\choose\alpha_2}\otimes X^{\beta_1\choose\beta_2})\star(X^{\gamma_1\choose\gamma_2}\otimes X^{\delta_1\choose\delta_2})]&=v^{a_0}X^{\alpha_1+\gamma_1\choose\alpha_2+\gamma_2}\otimes X^{\beta_1+\delta_1\choose\beta_2+\delta_2}\\
&=v^{a_1}X^{-\widetilde{E}(\alpha_1+\gamma_1)-\widetilde{E}'(\beta_1+\delta_1)+\widetilde{\alpha}_2+\widetilde{\gamma}_2+\widetilde{\beta}_2+\widetilde{\delta}_2},
\end{flalign*}
where $a_0=\Lambda(\widetilde{E}'(\alpha_1+\beta_1)-\widetilde{\alpha}_2-\widetilde{\beta}_2,\widetilde{E}'(\gamma_1+\delta_1)-\widetilde{\gamma}_2-\widetilde{\delta}_2)+2(\beta_1,\gamma_1)+2\lr{\alpha_1,\delta_1}$,
and $a_1=a_0-(\alpha_1+\gamma_1,\beta_1+\delta_1)-\lr{\alpha_1+\gamma_1,\beta_1+\delta_1}$.
By simple calculation, we obtain that
\begin{flalign*}a_1=&\Lambda(\widetilde{E}'(\alpha_1+\beta_1)-\widetilde{\alpha}_2-\widetilde{\beta}_2,\widetilde{E}'(\gamma_1+\delta_1)-\widetilde{\gamma}_2-\widetilde{\delta}_2)+
\lr{\beta_1,\gamma_1}-\lr{\delta_1,\alpha_1}\\&-(\alpha_1,\beta_1)-(\gamma_1,\delta_1)-\lr{\alpha_1,\beta_1}-\lr{\gamma_1,\delta_1}.\end{flalign*}

On the other hand,
\begin{flalign*}
\mu(X^{\alpha_1\choose\alpha_2}\otimes X^{\beta_1\choose\beta_2})\star\mu(X^{\gamma_1\choose\gamma_2}\otimes X^{\delta_1\choose\delta_2})&=v^{b_0}X^{-\widetilde{E}\alpha_1-\widetilde{E}'\beta_1+\widetilde{\alpha}_2+\widetilde{\beta}_2}\star
X^{-\widetilde{E}\gamma_1-\widetilde{E}'\delta_1+\widetilde{\gamma}_2+\widetilde{\delta}_2}\\&=v^{b_1}X^{-\widetilde{E}(\alpha_1+\gamma_1)-\widetilde{E}'(\beta_1+\delta_1)+\widetilde{\alpha}_2+\widetilde{\gamma}_2+\widetilde{\beta}_2+\widetilde{\delta}_2}
\end{flalign*}
where $b_0=-(\alpha_1,\beta_1)-\lr{\alpha_1,\beta_1}-(\gamma_1,\delta_1)-\lr{\gamma_1,\delta_1}$, and $b_1=b_0+\Lambda(-\widetilde{E}\alpha_1-\widetilde{E}'\beta_1+\widetilde{\alpha}_2+\widetilde{\beta}_2,-\widetilde{E}\gamma_1-\widetilde{E}'\delta_1+\widetilde{\gamma}_2+\widetilde{\delta}_2)$.

Since \begin{flalign*} &\Lambda(\widetilde{E}'(\alpha_1+\beta_1)-\widetilde{\alpha}_2-\widetilde{\beta}_2,\widetilde{E}'(\gamma_1+\delta_1)-\widetilde{\gamma}_2-\widetilde{\delta}_2)-\Lambda(-\widetilde{E}\alpha_1-\widetilde{E}'\beta_1+\widetilde{\alpha}_2+\widetilde{\beta}_2,-\widetilde{E}\gamma_1-\widetilde{E}'\delta_1+\widetilde{\gamma}_2+\widetilde{\delta}_2)\\
&=\Lambda(\widetilde{E}'(\alpha_1+\beta_1),\widetilde{E}'(\gamma_1+\delta_1))-\Lambda(\widetilde{E}\alpha_1+\widetilde{E}'\beta_1,\widetilde{E}\gamma_1+\widetilde{E}'\delta_1)
-\Lambda(\widetilde{E}'(\alpha_1+\beta_1),\widetilde{\gamma}_2+\widetilde{\delta}_2)\\
&+\Lambda(\widetilde{E}\alpha_1+\widetilde{E}'\beta_1,\widetilde{\gamma}_2+\widetilde{\delta}_2)-\Lambda(\widetilde{\alpha}_2+\widetilde{\beta}_2,\widetilde{E}'(\gamma_1+\delta_1))
+\Lambda(\widetilde{\alpha}_2+\widetilde{\beta}_2,\widetilde{E}\gamma_1+\widetilde{E}'\delta_1)\\
&=\Lambda(\widetilde{E}'(\alpha_1+\beta_1),\widetilde{E}'(\gamma_1+\delta_1))-\Lambda(\widetilde{E}\alpha_1+\widetilde{E}'\beta_1,\widetilde{E}\gamma_1+\widetilde{E}'\delta_1) \quad(\text{by~Lemma~\ref{xq}})\\
&=\lr{\delta_1,\alpha_1}-\lr{\beta_1,\gamma_1},\quad(\text{by~Lemma~\ref{xq0}})\end{flalign*}
we obtain that $a_1=b_1$. Therefore,
$$\mu[(X^{\alpha_1\choose\alpha_2}\otimes X^{\beta_1\choose\beta_2})\star(X^{\gamma_1\choose\gamma_2}\otimes X^{\delta_1\choose\delta_2})]=\mu(X^{\alpha_1\choose\alpha_2}\otimes X^{\beta_1\choose\beta_2})\star\mu(X^{\gamma_1\choose\gamma_2}\otimes X^{\delta_1\choose\delta_2}).$$
\end{proof}

Define $\Psi:=\mu\circ(\int\otimes\int)\circ\Delta$. In other words, we have the following commutative diagram
\begin{equation}
\xymatrix{(\M\H_\Lambda(\A),\star)\ar[r]^-{\Psi}\ar[d]_-{\Delta}&(\mathcal{T}_\Lambda,\star)\\(\M\H(\A)\otimes\M\H(\A),\star)\ar[r]^-{\int\otimes\int}&(\mathcal{T}\otimes\mathcal{T},\star).\ar[u]_-{\mu}}
\end{equation}
Given a module $M\in\A$, we denote by $\mathrm{Gr}_{\bf{e}}M$ the set of all submodules $V$
of $M$ with $\Dim V= \bf{e}$. Let us present our main result as the following
\begin{theorem}\label{main result}
The map $\Psi:(\M\H_\Lambda(\A),\star)\longrightarrow(\mathcal{T}_\Lambda,\star)$ is a homomorphism of algebras. Moreover, for any $\alpha\in\mathbb{Z}^n$, $M\in\A$ and $P\in\P$,
$$\Psi(K_\alpha)=X^{\widetilde{\alpha}}\quad\text{and}\quad\Psi(\X_{M\oplus P[1]})=\sum\limits_{\mathbf{e}}v^{\lr{\mathbf{p}-\mathbf{e},\mathbf{m}-\mathbf{e}}}|\mathrm{Gr}_{\mathbf{e}}M|
X^{\widetilde{E}'(\mathbf{p}-\mathbf{e})-\widetilde{E}(\mathbf{m}-\mathbf{e})}.$$
\end{theorem}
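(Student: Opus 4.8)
The plan is to establish the theorem by composing the three homomorphisms already shown to be algebra maps. Since $\Psi = \mu\circ(\int\otimes\int)\circ\Delta$ is a composite of $\Delta$ (an algebra homomorphism by Lemma~\ref{ts1}), $\int\otimes\int$ (by Lemma~\ref{ts2}), and $\mu$ (by Proposition~\ref{ts3}), the fact that $\Psi$ is a homomorphism of algebras is immediate. The entire content of the theorem therefore lies in the explicit computation of $\Psi$ on the two types of generators $K_\alpha$ and $\X_{M\oplus P[1]}$.

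For $\Psi(K_\alpha)$, I would trace $K_\alpha$ through the diagram. First apply $\Delta$ using (\ref{yucheng}) with $L=0$ and $P=0$: the only term is $\X_0\otimes K_\alpha$, i.e. $1\otimes K_\alpha$. Applying $\int\otimes\int$ gives $X^{\mathbf{0}}\otimes X^{\widetilde{\alpha}}$, which in the notation ${\alpha_1\choose\alpha_2}\otimes{\beta_1\choose\beta_2}$ means $\alpha_1=\alpha_2=\beta_1=\mathbf{0}$ and $\beta_2=\alpha$. Then $\mu$ sends this to $v^{0}X^{\widetilde{\alpha}}=X^{\widetilde{\alpha}}$, as claimed.

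For $\Psi(\X_{M\oplus P[1]})$, the computation is the heart of the proof. I would apply $\Delta$ to $\X_{M\oplus P[1]}=K_0\ast\X_{M\oplus P[1]}$ via (\ref{yucheng}), obtaining $\sum_{[N_1],[N_2]}q^{\langle\hat{N}_1,\hat{N}_2-\hat{P}\rangle}F_{N_1N_2}^M\,\X_{N_1}\otimes\X_{N_2\oplus P[1]}$. Applying $\int\otimes\int$ and using $\int\X_{N_1}=X^{\bar{\mathbf{n}}_1}$ and $\int\X_{N_2\oplus P[1]}=X^{\bar{\mathbf{n}}_2-\bar{\mathbf{p}}}$ converts each summand into $X^{\bar{\mathbf{n}}_1}\otimes X^{\bar{\mathbf{n}}_2-\bar{\mathbf{p}}}$, so in the $\mu$-notation one has $\alpha_1=\mathbf{n}_1$, $\alpha_2=\mathbf{0}$, $\beta_1=\mathbf{n}_2-\mathbf{p}$, $\beta_2=\mathbf{0}$. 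The key reduction is that the Hall numbers collapse: $\sum_{[N_1]}F_{N_1N_2}^M$ counts submodules, and after reparametrizing by $\mathbf{e}=\mathbf{n}_2-\mathbf{p}$ (equivalently by the dimension vector of the submodule) the sum over isomorphism classes with fixed dimension vectors becomes a count of the Grassmannian $\mathrm{Gr}_{\mathbf{e}}M$. I would then apply $\mu$, which contributes the prefactor $v^{-(\mathbf{n}_1,\mathbf{n}_2-\mathbf{p})-\langle\mathbf{n}_1,\mathbf{n}_2-\mathbf{p}\rangle}$ and produces the monomial $X^{-\widetilde{E}\mathbf{n}_1-\widetilde{E}'(\mathbf{n}_2-\mathbf{p})}$.

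The main obstacle, and where the care is required, is matching the exponent of $v$ to the target $v^{\langle\mathbf{p}-\mathbf{e},\mathbf{m}-\mathbf{e}\rangle}$ and the monomial to $X^{\widetilde{E}'(\mathbf{p}-\mathbf{e})-\widetilde{E}(\mathbf{m}-\mathbf{e})}$. Here I would set $\mathbf{e}$ to be the submodule dimension (so that the quotient has dimension $\mathbf{n}_1$ and the sub has dimension $\mathbf{n}_2$, giving $\mathbf{m}=\mathbf{n}_1+\mathbf{n}_2$ by additivity of $\Dim$ on short exact sequences), and reconcile the three accumulated scalar contributions — the $q^{\langle\hat{N}_1,\hat{N}_2-\hat{P}\rangle}$ from $\Delta$ and the $v$-power from $\mu$ — using the bilinearity of $\langle\cdot,\cdot\rangle$ together with the identities $\widetilde{E}'-\widetilde{E}=\widetilde{B}$ and the relations between $\Lambda$, $\widetilde{E}$, $\widetilde{E}'$ recorded in Lemmas~\ref{eulerjs} and~\ref{xq}. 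Verifying that the monomial exponent simplifies to $\widetilde{E}'(\mathbf{p}-\mathbf{e})-\widetilde{E}(\mathbf{m}-\mathbf{e})$ is a direct substitution once $\mathbf{m}=\mathbf{n}_1+\mathbf{n}_2$ and $\mathbf{e}=\mathbf{n}_2$ are used; the $v$-exponent bookkeeping is the only genuinely delicate step and is precisely what the auxiliary lemmas on compatible pairs were designed to handle.
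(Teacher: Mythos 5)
Your proposal is correct and takes essentially the same route as the paper's own proof: $\Psi$ is a homomorphism of algebras simply because it is the composite of the maps in Lemma \ref{ts1}, Lemma \ref{ts2} and Proposition \ref{ts3}, and the values on generators are computed exactly as you describe, by pushing $K_\alpha$ and $\X_{M\oplus P[1]}$ through the commutative diagram and collapsing $\sum F_{N_1N_2}^M$ over classes with fixed dimension vectors into $|\mathrm{Gr}_{\mathbf{e}}M|$, where $\mathbf{e}=\mathbf{n}_2$ is the dimension vector of the submodule (your first parametrization ``$\mathbf{e}=\mathbf{n}_2-\mathbf{p}$'' is a slip, corrected later in your own text). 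The only inaccuracy is that the final bookkeeping is easier than you anticipate: both the monomial exponent and the $v$-exponent $\lr{\mathbf{p}-\mathbf{e},\mathbf{m}-\mathbf{e}}$ follow from linearity of $\widetilde{E},\widetilde{E}'$ and the identity $(\alpha,\beta)=\lr{\alpha,\beta}+\lr{\beta,\alpha}$ alone; Lemmas \ref{eulerjs} and \ref{xq} are consumed inside the proof of Proposition \ref{ts3} and play no further role at this stage.
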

\begin{proof}
By Lemma \ref{ts1}, Lemma \ref{ts2} and Proposition \ref{ts3}, we obtain that $\Psi$ is a homomorphism of algebras.

For any $\alpha\in\mathbb{Z}^n$, \begin{flalign*}\Psi(K_\alpha)&=\mu\circ(\int\otimes\int)\circ\Delta(K_\alpha)\\&=\mu\circ(\int\otimes\int)(1\otimes K_\alpha)\\&=\mu(1\otimes X^{\widetilde{\alpha}})=X^{\widetilde{\alpha}}.\end{flalign*}

For any $M\in\A$ and $P\in\P$, we have that
\begin{flalign*}
\Psi(\X_{M\oplus P[1]})&=\mu\circ(\int\otimes\int)\circ\Delta(\X_{M\oplus P[1]})\\
&=\mu\circ(\int\otimes\int)(\sum\limits_{[U],[E]}q^{\lr{\hat{M}-\hat{E},\hat{E}-\hat{P}}}F_{UE}^M\X_U\otimes\X_{E\oplus P[1]})\\
&=\mu(\sum\limits_{[U],[E]}q^{\lr{\mathbf{m}-\mathbf{e},\mathbf{e}-\mathbf{p}}}F_{UE}^MX^{\bar{\mathbf{u}}}\otimes X^{\bar{\mathbf{e}}-\bar{\mathbf{p}}})\\
&=\sum\limits_{[U],[E]}v^{2\lr{\mathbf{m}-\mathbf{e},\mathbf{e}-\mathbf{p}}}F_{UE}^M\mu(X^{\bar{\mathbf{m}}-\bar{\mathbf{e}}}\otimes X^{\bar{\mathbf{e}}-\bar{\mathbf{p}}})\\
&=\sum\limits_{\mathbf{e}}v^{2\lr{\mathbf{m}-\mathbf{e},\mathbf{e}-\mathbf{p}}}|\mathrm{Gr}_{\mathbf{e}}M|\mu(X^{\bar{\mathbf{m}}-\bar{\mathbf{e}}}\otimes X^{\bar{\mathbf{e}}-\bar{\mathbf{p}}})\end{flalign*}
\begin{flalign*}&=\sum\limits_{\mathbf{e}}v^{\lr{\mathbf{m}-\mathbf{e},\mathbf{e}-\mathbf{p}}-(\mathbf{m}-\mathbf{e},\mathbf{e}-\mathbf{p})}|\mathrm{Gr}_{\mathbf{e}}M|
X^{-\widetilde{E}(\mathbf{m}-\mathbf{e})-\widetilde{E}'(\mathbf{e}-\mathbf{p})}\\
&=\sum\limits_{\mathbf{e}}v^{\lr{\mathbf{p}-\mathbf{e},\mathbf{m}-\mathbf{e}}}|\mathrm{Gr}_{\mathbf{e}}M|
X^{\widetilde{E}'(\mathbf{p}-\mathbf{e})-\widetilde{E}(\mathbf{m}-\mathbf{e})}.
\end{flalign*}
\end{proof}

The quantum Caldero--Chapoton map associated with a finite acyclic valued quiver has been defined in \cite{Rupel1} and \cite{Qin}.
In \cite{Rupel1}, the quantum Caldero--Chapoton map is defined for
$kQ$-modules, while in \cite{Qin} such a map is defined for coefficient-free rigid objects
in the cluster category associated with $Q$. Let $M\in\A$ and $P\in\P$. We recall from \cite{DXZ} that the {\em quantum cluster character} associated to $M\oplus P[1]$ is defined to be $$X_{M\oplus P[1]}=\sum\limits_{\mathbf{e}}v^{\lr{\mathbf{p}-\mathbf{e},\mathbf{m}-\mathbf{e}}}|\mathrm{Gr}_{\mathbf{e}}M|
X^{-\widetilde{B}\mathbf{e}-\widetilde{E}\mathbf{m}+{\bf t}_P},$$
where ${\bf t}_P=\Dim (P/\rad P)$.
\begin{lemma}\label{assumption}
Let $M\in\A$ and $P\in\P$. If $\widetilde{E}'\mathbf{p}={\bf t}_P$, then $\Psi(\X_{M\oplus P[1]})=X_{M\oplus P[1]}$.
\end{lemma}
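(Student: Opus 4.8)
The plan is to prove the lemma by a direct term-by-term comparison of the two explicit formulas, since all the substantive work has already been carried out in Theorem \ref{main result}. By that theorem,
$\Psi(\X_{M\oplus P[1]})$ is a sum over dimension vectors $\mathbf{e}$ in which the scalar coefficient $v^{\lr{\mathbf{p}-\mathbf{e},\mathbf{m}-\mathbf{e}}}|\mathrm{Gr}_{\mathbf{e}}M|$ is attached to the monomial $X^{\widetilde{E}'(\mathbf{p}-\mathbf{e})-\widetilde{E}(\mathbf{m}-\mathbf{e})}$. The quantum cluster character $X_{M\oplus P[1]}$ recalled from \cite{DXZ} is a sum over the \emph{same} index set, with \emph{exactly the same} scalar coefficients, attached to the monomial $X^{-\widetilde{B}\mathbf{e}-\widetilde{E}\mathbf{m}+{\bf t}_P}$. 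Because the coefficients already agree summand by summand, it suffices to verify that the two exponents coincide in $\mathbb{Z}^m$ for every $\mathbf{e}$.

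To do this, I would first expand the exponent produced by Theorem \ref{main result} as $\widetilde{E}'(\mathbf{p}-\mathbf{e})-\widetilde{E}(\mathbf{m}-\mathbf{e}) = \widetilde{E}'\mathbf{p} - \widetilde{E}'\mathbf{e} + \widetilde{E}\mathbf{e} - \widetilde{E}\mathbf{m}$, and then substitute the identity $\widetilde{E}'-\widetilde{E}=\widetilde{B}$ from Section 5, which gives $-\widetilde{E}'\mathbf{e}+\widetilde{E}\mathbf{e} = -\widetilde{B}\mathbf{e}$. The exponent thus collapses to $\widetilde{E}'\mathbf{p} - \widetilde{B}\mathbf{e} - \widetilde{E}\mathbf{m}$. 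Finally, invoking the hypothesis $\widetilde{E}'\mathbf{p}={\bf t}_P$ rewrites this as $-\widetilde{B}\mathbf{e}-\widetilde{E}\mathbf{m}+{\bf t}_P$, which is precisely the exponent appearing in the definition of $X_{M\oplus P[1]}$. Hence the two sums agree term by term and $\Psi(\X_{M\oplus P[1]})=X_{M\oplus P[1]}$.

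There is essentially no obstacle here: the lemma is the bookkeeping step that recognizes the output of Theorem \ref{main result} as the quantum Caldero--Chapoton formula, once the dimension vector $\mathbf{p}$ of $P$ is tied to the top ${\bf t}_P=\Dim(P/\rad P)$ through the hypothesis $\widetilde{E}'\mathbf{p}={\bf t}_P$. The only point to keep in mind is that $\widetilde{E}$, $\widetilde{E}'$ and $\widetilde{B}$ send $\mathbb{Z}^n$ into $\mathbb{Z}^m$, so the required identity is an equality of vectors in $\mathbb{Z}^m$; no further structural information about $M$ or $P$ is needed beyond the stated compatibility condition.
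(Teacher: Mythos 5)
Your proposal is correct and is essentially identical to the paper's own proof: both reduce the lemma to the exponent identity $\widetilde{E}'(\mathbf{p}-\mathbf{e})-\widetilde{E}(\mathbf{m}-\mathbf{e})=-\widetilde{B}\mathbf{e}-\widetilde{E}\mathbf{m}+{\bf t}_P$, obtained from $\widetilde{B}=\widetilde{E}'-\widetilde{E}$ together with the hypothesis $\widetilde{E}'\mathbf{p}={\bf t}_P$, and then invoke Theorem \ref{main result}.
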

\begin{proof}
Since $\widetilde{B}=\widetilde{E}'-\widetilde{E}$ and $\widetilde{E}'\mathbf{p}={\bf t}_P$, we obtain that
$$\widetilde{E}'(\mathbf{p}-\mathbf{e})-\widetilde{E}(\mathbf{m}-\mathbf{e})=-\widetilde{B}\mathbf{e}-\widetilde{E}\mathbf{m}+{\bf t}_P.$$
Thus, by Theorem \ref{main result}, we get that $\Psi(\X_{M\oplus P[1]})=X_{M\oplus P[1]}$.
\end{proof}
\begin{remark}\label{zhuxishu}
If the quiver $\widetilde{Q}$ satisfies that there is no arrow from $i$ to $j$ for any vertices $1\leq i\leq n$ and $n<j\leq 2n$, i.e., for each $1\leq i\leq n$ the indecomposable projective ${\mathfrak{S}}$-module associated to $i$ is also a projective $\widetilde{\mathfrak{S}}$-module. Then $\widetilde{E}'\mathbf{p}={\bf t}_P$ for any $P\in\P$, thus $\Psi(\X_{M\oplus P[1]})=X_{M\oplus P[1]}$. In particular, we consider the quiver $\widetilde{Q}$ associated to $Q$ as follows: for each vertex $1\leq i\leq n$ we add the arrow $n+i\longrightarrow i$. In this case, by \cite{Rupel2} there exists a compatible pair $(\Lambda,\widetilde{B})$.
\end{remark}

In what follows, let $(\Lambda,\widetilde{B})$ be a compatible pair as mentioned in Remark \ref{zhuxishu}. Noting that the quantum torus $\mathcal{T}_\Lambda$ is an Ore domain, we denote by $\mathcal {F}$ its skew-field of fractions.
Let $\A(\Lambda,\widetilde{B})$ be the {\em quantum cluster algebra} (cf. \cite{BZ05}) associated to the compatible pair $(\Lambda,\widetilde{B})$, which is defined to be the $\mathbb{Z}[v,v^{-1}]$-subalgebra of $\mathcal {F}$ generated by all quantum cluster variables and coefficients. This quantum cluster algebra $\A(\Lambda,\widetilde{B})$ is called the {\em principal coefficient quantum cluster algebra}.  We also denote $\A(\Lambda,\widetilde{B})$ by $\A_q(Q)$. Let $\A\H_q(Q)$ be the $\mathbb{Z}[v,v^{-1}]$-subalgebra of $\mathcal {F}$ generated by all the quantum cluster characters $X_{M\oplus P[1]}$ and coefficients $X^{\widetilde{\alpha}}$, where $M\in\A$, $P\in\P$ and $\alpha\in\mathbb{Z}^n$.

By Theorem \ref{main result}, we recover the following
\begin{corollary}\cite[Theorem 8.4]{DXZ}
There exists a surjective algebra homomorphism $$\xymatrix{\Psi: \mathcal{MH}_{\Lambda}(\A)\ar@{->>}[r]& \mathcal{AH}_{q}(Q)}$$ defined on generators by
$$K_{\alpha}\mapsto X^{\widetilde{\alpha}}\quad\text{and}\quad\mathbb{X}_{M\oplus P[1]}\mapsto X_{M\oplus P[1]}$$
for any $\alpha\in\mathbb{Z}^n$, $M\in\A$ and $P\in\P$.
\end{corollary}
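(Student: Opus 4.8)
The plan is to deduce this corollary directly from Theorem \ref{main result} together with the preceding lemma and Remark \ref{zhuxishu}, since the substantive work has already been carried out. First I would recall from Theorem \ref{main result} that $\Psi = \mu \circ (\int \otimes \int) \circ \Delta$ is a homomorphism of algebras $(\mathcal{MH}_{\Lambda}(\A),\star) \to (\mathcal{T}_\Lambda,\star)$, sending $K_\alpha \mapsto X^{\widetilde{\alpha}}$ and sending $\X_{M\oplus P[1]}$ to the explicit Grassmannian sum computed there. Since $\mathcal{T}_\Lambda$ embeds into its skew-field of fractions $\mathcal{F}$, this already exhibits $\Psi$ as an algebra homomorphism whose target lies inside $\mathcal{F}$.

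Next I would specialize to the compatible pair $(\Lambda,\widetilde{B})$ singled out in Remark \ref{zhuxishu}, for which each indecomposable projective $\mathfrak{S}$-module remains projective over $\widetilde{\mathfrak{S}}$, so that $\widetilde{E}'\mathbf{p} = \mathbf{t}_P$ holds for every $P \in \P$. The preceding lemma then rewrites the image formula of Theorem \ref{main result} as $\Psi(\X_{M\oplus P[1]}) = X_{M\oplus P[1]}$, the genuine quantum cluster character, for all $M \in \A$ and $P \in \P$. Combined with $\Psi(K_\alpha) = X^{\widetilde{\alpha}}$, this shows that $\Psi$ carries the two distinguished families of generators of $\mathcal{MH}_{\Lambda}(\A)$ onto the two distinguished families of generators of $\mathcal{AH}_{q}(Q)$.

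To conclude, I would invoke Proposition \ref{tdygx}, which asserts that $\mathcal{MH}_{\Lambda}(\A)$ is generated as a $\ZZ[v,v^{-1}]$-algebra by the elements $K_\alpha$ (with $\alpha \in \mathbb{Z}^n$) and $\X_{M\oplus P[1]}$ (with $M \in \A$ and $P \in \P$). Because $\Psi$ is an algebra homomorphism, its image is precisely the subalgebra of $\mathcal{T}_\Lambda \subset \mathcal{F}$ generated by $\Psi(K_\alpha) = X^{\widetilde{\alpha}}$ and $\Psi(\X_{M\oplus P[1]}) = X_{M\oplus P[1]}$. By the definition of $\mathcal{AH}_{q}(Q)$ as the $\ZZ[v,v^{-1}]$-subalgebra of $\mathcal{F}$ generated by all coefficients $X^{\widetilde{\alpha}}$ and all quantum cluster characters $X_{M\oplus P[1]}$, this image coincides with $\mathcal{AH}_{q}(Q)$, so $\Psi$ corestricts to a surjective homomorphism $\mathcal{MH}_{\Lambda}(\A) \twoheadrightarrow \mathcal{AH}_{q}(Q)$, as claimed.

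The only point demanding genuine attention — rather than a real obstacle — is ensuring that the generators of the image match the defining generators of $\mathcal{AH}_{q}(Q)$ exactly; this is precisely where the hypothesis $\widetilde{E}'\mathbf{p} = \mathbf{t}_P$ of the preceding lemma must be secured through Remark \ref{zhuxishu}, without which the image of $\X_{M\oplus P[1]}$ would be the expression of Theorem \ref{main result} rather than the honest quantum cluster character $X_{M\oplus P[1]}$. Everything else is formal, since surjectivity of an algebra map onto the subalgebra generated by the images of a generating set is automatic.
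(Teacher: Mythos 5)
Your proposal is correct and follows exactly the paper's route: the corollary is obtained by combining Theorem \ref{main result} (which gives the algebra homomorphism $\Psi$ with $\Psi(K_\alpha)=X^{\widetilde{\alpha}}$ and the Grassmannian-sum image of $\X_{M\oplus P[1]}$), the preceding lemma together with Remark \ref{zhuxishu} (which converts that image into the genuine quantum cluster character $X_{M\oplus P[1]}$ since $\widetilde{E}'\mathbf{p}={\bf t}_P$), and the generation statement of Proposition \ref{tdygx}, so that surjectivity onto $\mathcal{AH}_q(Q)$ is automatic from matching generators. Nothing is missing.
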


According to \cite{Rupel2}, it is known that the quantum cluster algebra $\A_q(Q)$ is the subalgebra of $\mathcal{AH}_{q}(Q)$ generated by
\begin{equation*}
\{X^{\widetilde{\alpha}}, X_{M\oplus P[1]}~|~\alpha\in\mathbb{Z}^n, M\in\Ind(\A)~~\text{is rigid and}~~P\in\Ind(\P)\}.
\end{equation*}
Parallelly, we define $\mathcal{MC}_{\Lambda}(\A)$ to be the subalgebra of $\mathcal{MH}_{\Lambda}(\A)$ generated by $$\{K_{\alpha},\X_{M\oplus P[1]}~|~\alpha\in\mathbb{Z}^n, M\in\Ind(\A)~~\text{is rigid and}~~P\in\Ind(\P)\}.$$
Then we have the following

\begin{corollary}\cite[Corollary]{DXZ}
There exists a surjective algebra homomorphism $$\xymatrix{\pi: \mathcal{MC}_{\Lambda}(\A_Q)\ar@{->>}[r]& \mathcal{A}_{q}(Q).}$$\end{corollary}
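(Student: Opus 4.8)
The final statement to prove is the Corollary asserting a surjective algebra homomorphism $\pi\colon \mathcal{MC}_\Lambda(\A_Q)\twoheadrightarrow \A_q(Q)$. The plan is to obtain $\pi$ simply by restricting the surjective homomorphism $\Psi\colon \mathcal{MH}_\Lambda(\A)\twoheadrightarrow\mathcal{AH}_q(Q)$ from the preceding Corollary to the distinguished subalgebra $\mathcal{MC}_\Lambda(\A_Q)$, and to check that its image is exactly $\A_q(Q)$. Since $\Psi$ is already known to be an algebra homomorphism, its restriction to any subalgebra is automatically an algebra homomorphism, so the only real content is identifying the image of the restriction with the quantum cluster algebra $\A_q(Q)$.

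First I would recall the two descriptions of generators that have been set up immediately before the statement. On the source side, $\mathcal{MC}_\Lambda(\A_Q)$ is by definition the subalgebra of $\mathcal{MH}_\Lambda(\A)$ generated by the set $\{K_\alpha,\ \X_{M\oplus P[1]}\mid \alpha\in\mathbb{Z}^n,\ M\in\Ind(\A)\text{ rigid},\ P\in\Ind(\P)\}$. On the target side, the result quoted from \cite{Rupel2} says that $\A_q(Q)$ is the subalgebra of $\mathcal{AH}_q(Q)$ generated by $\{X^{\widetilde{\alpha}},\ X_{M\oplus P[1]}\mid \alpha\in\mathbb{Z}^n,\ M\in\Ind(\A)\text{ rigid},\ P\in\Ind(\P)\}$. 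The key point is that the generating set of $\mathcal{MC}_\Lambda(\A_Q)$ maps, under $\Psi$, precisely onto the generating set of $\A_q(Q)$: by the previous Corollary we have $\Psi(K_\alpha)=X^{\widetilde{\alpha}}$ and $\Psi(\X_{M\oplus P[1]})=X_{M\oplus P[1]}$ (the latter using the identification $\Psi(\X_{M\oplus P[1]})=X_{M\oplus P[1]}$ established in the Lemma under the hypothesis $\widetilde{E}'\mathbf{p}=\mathbf{t}_P$, which holds for the compatible pair fixed in Remark \ref{zhuxishu}).

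With this matching of generators in hand, the argument is a short diagram chase. Set $\pi:=\Psi|_{\mathcal{MC}_\Lambda(\A_Q)}$. Because $\Psi$ is a ring homomorphism and $\mathcal{MC}_\Lambda(\A_Q)$ is closed under the twisted multiplication $\star$, the restriction $\pi$ is again a homomorphism of $\mathbb{Z}[v,v^{-1}]$-algebras. Its image $\pi(\mathcal{MC}_\Lambda(\A_Q))$ is the subalgebra of $\mathcal{AH}_q(Q)$ generated by the images of the generators, namely by $\{X^{\widetilde{\alpha}}\}\cup\{X_{M\oplus P[1]}\mid M\in\Ind(\A)\text{ rigid},\ P\in\Ind(\P)\}$; by the cited description from \cite{Rupel2} this generated subalgebra is exactly $\A_q(Q)$. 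Hence $\pi$ is a well-defined surjection onto $\A_q(Q)$, as claimed.

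I do not expect a serious obstacle, since all the heavy lifting has been done in Theorem \ref{main result} and the preceding Corollary. The one point that deserves care is making sure the hypothesis required for $\Psi(\X_{M\oplus P[1]})=X_{M\oplus P[1]}$ is in force: this is guaranteed because we are working with the compatible pair $(\Lambda,\widetilde{B})$ fixed in Remark \ref{zhuxishu}, for which $\widetilde{E}'\mathbf{p}=\mathbf{t}_P$ holds for every $P\in\P$. The only mild subtlety, worth a sentence, is to confirm that restricting to indecomposable rigid $M$ and indecomposable projective $P$ on the source corresponds correctly to the generators used in \cite{Rupel2} on the target; but this is immediate from the explicit formula for $\Psi$ on generators, so the proof reduces to quoting the generator-matching and invoking surjectivity of generators.
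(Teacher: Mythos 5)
Your proposal is correct and matches the paper's intended argument: the paper states this corollary without further proof precisely because it follows by restricting the surjection $\Psi$ of the preceding corollary to the subalgebra $\mathcal{MC}_{\Lambda}(\A)$ and matching its generators $\{K_{\alpha},\X_{M\oplus P[1]}\}$ with the generators $\{X^{\widetilde{\alpha}}, X_{M\oplus P[1]}\}$ of $\A_{q}(Q)$ cited from Rupel's work. Your added care about the hypothesis $\widetilde{E}'\mathbf{p}={\bf t}_P$ is exactly what the paper secures by fixing the compatible pair of Remark \ref{zhuxishu} before stating the result.
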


Using Theorem \ref{main result}, we can also recover the cluster multiplication formulas (cf. \cite{DX,Fei,DSC,DXZ}).
\begin{corollary}\label{cfgs}
Let $M,N\in\A$ and $P\in\P$. Then
\vspace{0.2cm}
\begin{itemize}
  \item [(1)] ${X}_{M}{X}_{N}=q^{\frac{1}{2}\Lambda(\widetilde{E}{\bf m},
\widetilde{E}\bf{n})+\langle\bf{m},\bf{n}\rangle}\sum\limits_{[L]}\frac{|\mathrm{Ext}_{\A}^{1}(M,N)_{L}|}{|\mathrm{Hom}_{\A}(M,N)|}X_L;$
\vspace{0.4cm}
\item [(2)] $\ X_{P[1]}X_{M}=q^{\frac{1}{2}\Lambda(\widetilde{E}{\bf m},\widetilde{E}{\bf p})-\lr{{\bf p},{\bf m}}}
\sum\limits_{[B],[Q]}|{}_Q\Hom_{\A}(P,M)_B| X_{B\oplus Q[1]}.$
\end{itemize}
\end{corollary}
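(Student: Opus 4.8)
The plan is to obtain both formulas by pushing the defining relations (\ref{x4}) and (\ref{x6}) of $\M\H_\Lambda(\A)$ through the algebra homomorphism $\Psi$ of Theorem \ref{main result}. Throughout we work with a compatible pair $(\Lambda,\widetilde{B})$ as in Remark \ref{zhuxishu}, so that $\Psi(\X_{M\oplus P[1]})=X_{M\oplus P[1]}$ for all $M\in\A$ and $P\in\P$; in particular $\Psi(\X_M)=X_M$ and $\Psi(\X_{P[1]})=X_{P[1]}$. Since the products $X_M X_N$ and $X_{P[1]}X_M$ in the statement are computed with the $\star$-product of $\mathcal{T}_\Lambda$, and $\Psi:(\M\H_\Lambda(\A),\star)\to(\mathcal{T}_\Lambda,\star)$ is multiplicative, applying $\Psi$ to each relation forces the images of the two sides to agree.

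For (1) I would apply $\Psi$ to relation (\ref{x4}). The left-hand side becomes $\Psi(\X_M\star\X_N)=X_M\star X_N$, while $\ZZ[v,v^{-1}]$-linearity of $\Psi$ together with $\Psi(\X_L)=X_L$ turns the right-hand side into $q^{\frac12\Lambda(\widetilde{E}'\mathbf{m},\widetilde{E}'\mathbf{n})+\lr{\mathbf{m},\mathbf{n}}}\sum_{[L]}\frac{|\mathrm{Ext}_{\A}^{1}(M,N)_{L}|}{|\mathrm{Hom}_{\A}(M,N)|}X_L$. This is already of the asserted form except that the quadratic $\Lambda$-term carries $\widetilde{E}'$ instead of $\widetilde{E}$; the two agree by Lemma \ref{xq0}(1), which gives $\Lambda(\widetilde{E}'\mathbf{m},\widetilde{E}'\mathbf{n})=\Lambda(\widetilde{E}\mathbf{m},\widetilde{E}\mathbf{n})$, completing (1).

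For (2) the same procedure applied to relation (\ref{x6}) yields $X_{P[1]}\star X_M=q^{-\frac12\Lambda(\widetilde{E}'\mathbf{p},\widetilde{E}'\mathbf{m})-\lr{\mathbf{p},\mathbf{m}}}\sum_{[B],[Q]}|{}_Q\Hom_{\A}(P,M)_B|X_{B\oplus Q[1]}$. To bring the exponent to the stated shape I would first use Lemma \ref{xq0}(1) to rewrite $\Lambda(\widetilde{E}'\mathbf{p},\widetilde{E}'\mathbf{m})=\Lambda(\widetilde{E}\mathbf{p},\widetilde{E}\mathbf{m})$ and then invoke the skew-symmetry of $\Lambda$ to replace this by $-\Lambda(\widetilde{E}\mathbf{m},\widetilde{E}\mathbf{p})$, so that $-\frac12\Lambda(\widetilde{E}'\mathbf{p},\widetilde{E}'\mathbf{m})=\frac12\Lambda(\widetilde{E}\mathbf{m},\widetilde{E}\mathbf{p})$, exactly the coefficient in the statement.

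I expect no structural difficulty: once $\Psi$ is recorded as a $\star$-algebra homomorphism sending $\X_{M\oplus P[1]}$ to $X_{M\oplus P[1]}$, the corollary is a formal consequence of the Hall-type relations collected in Proposition \ref{tdygx}. The only points demanding attention are the bookkeeping of the scalar exponents --- keeping the distinction between $\widetilde{E}$ and $\widetilde{E}'$ straight and applying skew-symmetry of $\Lambda$ correctly --- and recognizing that the unadorned products $X_M X_N$ and $X_{P[1]}X_M$ denote the twisted $\star$-products of the quantum torus $\mathcal{T}_\Lambda$.
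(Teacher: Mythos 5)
Your proposal is correct and is essentially identical to the paper's own (very terse) proof: the authors likewise obtain both formulas by applying the $\star$-algebra homomorphism $\Psi$ of Theorem \ref{main result} to relations (\ref{x4}) and (\ref{x6}) of Proposition \ref{tdygx}, and then use Lemma \ref{xq0} (with skew-symmetry of $\Lambda$ for part (2)) to convert the $\widetilde{E}'$-exponents into the stated $\widetilde{E}$-exponents. Your bookkeeping of the scalar factors, and your appeal to Remark \ref{zhuxishu} to ensure $\Psi(\X_{M\oplus P[1]})=X_{M\oplus P[1]}$, match the paper's intended argument exactly.
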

\begin{proof}
It is proved by (\ref{x4}) and (\ref{x6}) in Proposition \ref{tdygx} together with Theorem \ref{main result} and Lemma \ref{xq0}.
\end{proof}
\begin{remark}
We obtain more than relations given in Corollary \ref{cfgs}. In fact,
each relation in Proposition \ref{tdygx} corresponds to a relation in $\mathcal{AH}_{q}(Q)$.
\end{remark}

\section{An interpretation of $g$-vectors}
Let $(\Lambda,\widetilde{B})$ be a compatible pair as mentioned in Remark \ref{zhuxishu}. In particular, the quantum cluster algebra $\A(\Lambda,\widetilde{B})$ is of principal coefficients.  We endow the quantum torus $\mathcal{T}_\Lambda$ a $\mathbb{Z}^n$-grading by setting
\[\deg X^{\alpha}:=\sum_{i=1}^n a_ie_i-\sum_{j=1}^na_{n+j}\beta_j,\]
where $\alpha=(a_1,\cdots, a_{2n})^{\rm tr}\in \mathbb{Z}^{2n}$ and $\beta_j$ is the $j$-th column vector of the matrix $B$. According to \cite[Proposition 6.1]{FZ4}, every quantum cluster variable $z$ of $\A(\Lambda,\widetilde{B})$ is homogeneous with respect to the above $\mathbb{Z}^n$-grading and the {\it $g$-vector} $g(z)$ of $z$ is defined as the degree of $z$.
Note that the non-frozen quantum cluster variables are precisely
\[\{X_{M}, X_{P[1]}~|~M\in\Ind(\A)~~\text{is rigid and}~~P\in\Ind(\P)\}.\]
We are going to give a categorical interpretation of $g$-vectors via the morphism category $C_2(\P)$.
Similar to Lemma \ref{ftg}, $\hat{C}_{P_1},\cdots, \hat{C}_{P_n},\hat{K}_{P_1},\cdots, \hat{K}_{P_n}$ is also a $\mathbb{Z}$-basis of $K(C_2(\P))$. For each $M_\bullet \in C_2(\P)$, let
\[\operatorname{ind} (M_\bullet)=(h_1,\cdots, h_{2n})^{\rm tr}\in \mathbb{Z}^{2n}\]
be the coordinate vector of $\hat{M}_\bullet$ with respect to the basis $\hat{C}_{P_1},\cdots, \hat{C}_{P_n},\hat{K}_{P_1},\cdots, \hat{K}_{P_n}$. We define
\[\operatorname{ind}^\circ(M_\bullet)=(h_1,\cdots, h_n)^{\rm tr}\in \mathbb{Z}^n\]
to be the truncation of $\operatorname{ind}(M_\bullet)$.
\begin{proposition}
Let $M\in\Ind(\A)$ be rigid and $P\in\Ind(\P)$, we have that
\[g(X_{M})=-\operatorname{ind}^\circ(C_M)~ \text{and}~ g(X_{P[1]})=-\operatorname{ind}^\circ(Z_P).\]
\end{proposition}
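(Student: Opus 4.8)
The plan is to compute both sides through the Grothendieck group together with the explicit description of $\Psi$ in Theorem \ref{main result}, using that $g(z)=\deg z$. Since $M$ is rigid indecomposable and $P$ is indecomposable projective, $X_M$ and $X_{P[1]}$ are non-frozen quantum cluster variables, hence homogeneous for the $\mathbb{Z}^n$-grading by \cite[Proposition 6.1]{FZ4}; consequently each $g$-vector can be read off as the $\deg$ of any single Laurent monomial appearing in the corresponding variable. Because we work with the compatible pair of Remark \ref{zhuxishu}, there we have $\Psi(\X_M)=X_M$ and $\Psi(\X_{P[1]})=X_{P[1]}$, so I may extract the relevant monomials directly from the formula of Theorem \ref{main result}.

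First I would compute the two indices via the isomorphism $f$ of Lemma \ref{ftg}. Evaluating dimension vectors gives $f(\hat C_{P_i})=\binom{\Dim P_i}{\Dim P_i}$ and $f(\hat K_{P_i})=\binom{\mathbf{0}}{\Dim P_i}$, while the minimal projective resolution (\ref{mpr}) yields $f(\hat C_M)=\binom{\mathbf m}{\Dim P_M}$ and $f(\hat Z_P)=\binom{-\mathbf p}{\mathbf 0}$. Expanding $\hat C_M$ and $\hat Z_P$ in the basis $\hat C_{P_1},\dots,\hat C_{P_n},\hat K_{P_1},\dots,\hat K_{P_n}$ and comparing \emph{upper} components, the truncation $\operatorname{ind}^\circ$ is precisely the coordinate vector of $\mathbf m$, respectively $-\mathbf p$, in the basis $\{\Dim P_i\}$ of $\mathbb{Z}^n$, since only the $\hat C_{P_i}$ contribute to the upper block. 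Writing $\Gamma=(\Dim P_1\mid\cdots\mid\Dim P_n)$ for the Cartan matrix, this reads $\operatorname{ind}^\circ(C_M)=\Gamma^{-1}\mathbf m$ and $\operatorname{ind}^\circ(Z_P)=-\Gamma^{-1}\mathbf p$; and the Euler form identity $(I_n-R^{tr})D_n=D_n(I_n-R')$ recalled in Section 5, evaluated on the classes of the indecomposable projectives, forces $\Gamma^{-1}=I_n-R$.

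Next I would compute the two degrees. Specialising Theorem \ref{main result} to $P=0$ and taking the summand $\mathbf e=\mathbf 0$ exhibits the monomial $X^{-\widetilde E\mathbf m}$ in $X_M$, while $M=0$ collapses the sum to the single term $X_{P[1]}=X^{\widetilde E'\mathbf p}$. For the quiver $\widetilde Q$ of Remark \ref{zhuxishu} (one added arrow $n+i\to i$ for each $i$) a short inspection of the definitions of $r_{ij}$ and $r'_{ij}$ shows that the lower $n\times n$ block of $\widetilde R$ vanishes, since no arrow enters a frozen vertex, whereas that of $\widetilde R'$ equals $I_n$; hence $\widetilde E=\binom{I_n-R}{0}$ and $\widetilde E'=\binom{I_n-R'}{-I_n}$. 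Substituting into the grading $\deg X^{\alpha}=\alpha^+-B\alpha^-$, where $\alpha=\binom{\alpha^+}{\alpha^-}\in\mathbb{Z}^{2n}$, gives $g(X_M)=-(I_n-R)\mathbf m$ and, using $B=R'-R$, $g(X_{P[1]})=(I_n-R')\mathbf p+B\mathbf p=(I_n-R)\mathbf p$. Comparing with the previous paragraph and $\Gamma^{-1}=I_n-R$ then yields $g(X_M)=-\operatorname{ind}^\circ(C_M)$ and $g(X_{P[1]})=-\operatorname{ind}^\circ(Z_P)$.

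The genuinely structural inputs are the identity $\Gamma^{-1}=I_n-R$ and the precise shape of the frozen blocks of $\widetilde R,\widetilde R'$; the latter is exactly what makes every $X^{-\widetilde B\mathbf e}$ have degree zero, and so underwrites the homogeneity invoked at the outset and the legitimacy of reading the $g$-vector off a single monomial. The step requiring the most care is keeping distinct the two bases of $\mathbb{Z}^{2n}$ in play, namely the basis $\{\Dim Z_{P_i},\Dim K_{P_i}\}$ underlying Lemma \ref{ftg} and the basis $\{\Dim C_{P_i},\Dim K_{P_i}\}$ underlying $\operatorname{ind}$, since interchanging them flips the sign of the truncated index; applying $f$ uniformly to both families, as above, avoids this pitfall.
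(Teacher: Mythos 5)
Your argument is sound and its conclusions are correct, but it takes a genuinely different route from the paper's. The paper never inverts the Cartan matrix: starting from a projective resolution $0\to \bigoplus_i b_iP_i\to \bigoplus_i a_iP_i\to M\to 0$, it applies Lemma \ref{resol} to get the exact sequence $0\to \bigoplus_i b_iC_{P_i}\to \bigoplus_i a_iC_{P_i}\oplus\bigoplus_i b_iK_{P_i}\to C_M\to 0$ in $C_2(\mathscr{P})$, from which $\operatorname{ind}^\circ(C_M)=(a_1-b_1,\ldots,a_n-b_n)$ is read off directly; on the degree side it identifies the $k$-th entry of $E'\mathbf{m}$ with $a_k-b_k$ via the Euler pairing $\langle M,S_k\rangle$; and for $P=P_k$ it obtains $g(X_{P_k[1]})=e_k$ immediately and $\operatorname{ind}^\circ(Z_{P_k})=-e_k$ from $0\to C_{P_k}\to K_{P_k}\to Z_{P_k}\to 0$. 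You instead compute both indices at once through the isomorphism $f$ of Lemma \ref{ftg}, reducing everything to the single linear-algebra identity $\Gamma^{-1}=I_n-R$ (which indeed follows from $\langle P_k,S_j\rangle=\delta_{jk}\dim_k\mathcal{D}_k$ together with the matrix form of the Euler form), and you compute the degrees from explicit block forms of $\widetilde{E},\widetilde{E}'$. Your version is more uniform -- both identities become $g=\mp(I_n-R)(\cdot)$, with no choice of resolution and no case distinction -- while the paper's proof stays entirely inside the morphism category and needs neither matrix inversion nor any analysis of the frozen blocks beyond what is implicit in $\deg$.

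One formula must be corrected. With the paper's conventions $\widetilde{E}=\widetilde{I}-\widetilde{R}'$ and $\widetilde{E}'=\widetilde{I}-\widetilde{R}$, your (correct) determination of the frozen blocks -- lower block of $\widetilde{R}$ equal to $0$, lower block of $\widetilde{R}'$ equal to $I_n$ -- yields $\widetilde{E}={I_n-R'\choose -I_n}$ and $\widetilde{E}'={I_n-R\choose 0}$, i.e.\ exactly the interchange of what you wrote; as a consistency check, your assignment would give $\widetilde{E}'-\widetilde{E}=-\widetilde{B}$ rather than $\widetilde{B}$. The slip happens to be harmless here: since $\widetilde{E}'-\widetilde{E}=\widetilde{B}$ and $\deg X^{\widetilde{B}\alpha}=0$ (a fact you yourself note), one has $\deg X^{\widetilde{E}\alpha}=\deg X^{\widetilde{E}'\alpha}$ for all $\alpha$, so either assignment produces the same, correct, degrees $g(X_M)=-(I_n-R)\mathbf{m}$ and $g(X_{P[1]})=(I_n-R)\mathbf{p}$, which is also what the paper finds. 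Still, the two displayed block matrices are swapped as written and should be fixed.
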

\begin{proof} By the definition of $X_M$, we have that
\[g(X_M)=\deg X_M=\deg X^{-\widetilde{E}{\bf m}}=-E{\bf m}-B{\bf m}=-E'{\bf m},\] where $E, E'$ denote the principal parts of the matrices $\widetilde{E}$ and $\widetilde{E}'$, respectively.

Let $0\to \bigoplus_{i=1}^nb_iP_i\to \bigoplus_{i=1}^n a_iP_i\to M\to 0$ be a projective resolution of $M$. It follows that the $k$-th component of $-E'{\bf m}$ is $-\langle M, S_k\rangle=-(a_k-b_k)$. Consequently, $g(X_M)=-(a_1-b_1,\cdots, a_n-b_n)^{\rm tr}$.
On the other hand, by Lemma \ref{resol}, we have the short exact sequence
\[0\to \bigoplus_{i=1}^n b_iC_{P_i}\to \bigoplus_{i=1}^n a_iC_{P_i}\oplus \bigoplus_{i=1}^n b_iK_{P_i}\to C_M\to 0.\]
In particular, $\operatorname{ind}^\circ(C_M)=(a_1-b_1,\cdots,a_n-b_n)^{\rm tr}=-g(X_M)$.

Without loss of generality, we assume that $P=P_k$ for some $1\leq k\leq n$. By definition, we have $g(X_{P[1]})=e_k$. Again by Lemma \ref{resol}, we have the short exact sequence
\[0\to C_{P_k}\to K_{P_k}\to Z_{P_k}\to 0.\]
Hence, $g(X_{P_k[1]})=-\operatorname{ind}^0(Z_{P_k})$.
\end{proof}

\section{Appendix: Quantum cluster characters of a derived Hall subalgebra}
\subsection{Derived Hall algebras}
The derived Hall algebra of the bounded derived category $D^b(\A)$ of $\A$ was introduced in \cite{Toen2006} (see also \cite{XiaoXu}). By definition, the (Drinfeld dual) {\em derived Hall algebra} $\mathcal {D}\mathcal {H}(\A)$ is the free $\ZZ[v,v^{-1}]$-module with the basis $\{u_{X_\bullet}~|~X_\bullet\in \Iso(D^b(\A))\}$ and the multiplication defined by
\begin{equation}
u_{X_\bullet}\diamond u_{Y_\bullet}=\sum\limits_{[{Z_\bullet}]}\frac{|\Ext^1_{D^b(\A)}({X_\bullet},{Y_\bullet})_{Z_\bullet}|}{\prod\limits_{i\geq0}|\Hom_{D^b(\A)}({X_\bullet}[i],{Y_\bullet})|^{(-1)^i}} u_{Z_\bullet},
\end{equation}
where $\Ext^1_{D^b(\A)}({X_\bullet},{Y_\bullet})_{Z_\bullet}$ is defined to be $\Hom_{D^b(\A)}({X_\bullet},{Y_\bullet}[1])_{{Z_\bullet}[1]}$, which denotes the subset of $\Hom_{D^b(\A)}({X_\bullet},{Y_\bullet}[1])$ consisting of morphisms $f:{X_\bullet}\rightarrow {Y_\bullet}[1]$ whose cone is isomorphic to ${Z_\bullet}[1]$.

For any ${X_\bullet},{Y_\bullet}\in D^b(\A)$, define
\begin{equation*}
\lr{{X_\bullet},{Y_\bullet}}:=\sum\limits_{i\in\mathbb{Z}}(-1)^i\dim_k\Hom_{D^b(\A)}({X_\bullet},{Y_\bullet}[i]),
\end{equation*}
it also descends to give a bilinear form on the Grothendieck group of $D^b(\A)$. Moreover, it coincides with the Euler form of $K(\A)$ over the objects in $\A$. In particular, for any $M,N\in\A$ and $i,j\in\mathbb{Z}$, we have that $\lr{M[i],N[j]}=(-1)^{i-j}\lr{M,N}$.

Let us twist the multiplication in $\mathcal {D}\mathcal {H}(\A)$ as follows:
\begin{equation}u_{X_\bullet}\ast u_{Y_\bullet}=q^{\lr{{X_\bullet},{Y_\bullet}}} u_{X_\bullet}\diamond u_{Y_\bullet}\end{equation}
for any ${X_\bullet},{Y_\bullet}\in D^b(\A)$.
The \emph{twisted derived Hall algebra} $\mathcal {D}\mathcal {H}_{q}(\A)$ is the same module as $\mathcal {D}\mathcal {H}(\A)$, but with the twisted multiplication. Then we have the following
\begin{proposition}{\rm(\cite{Toen2006})}\label{twistderived}
$\mathcal {D}\mathcal {H}_q(\A)$ is an associative unital algebra generated by the elements in $\{u_{M[i]}~|~M\in\Iso(\A),~i\in \mathbb{Z}\}$ and the following relations
\begin{flalign}
&u_{M[i]}\ast u_{N[i]}=q^{\lr{M,N}}\sum\limits_{[L]}{\frac{{|\Ext_\mathcal{A}^1{{(M,N)}_L}|}}{{|\Hom_\mathcal{A}(M,N)|}}}u_{L[i]};\\
&u_{M[i+1]}\ast u_{N[i]}=q^{-\lr{M,N}}\sum\limits_{[X],[Y]}|{}_X\Hom_{\A}(M,N)_Y| u_{Y[i]}\ast u_{X[i+1]};\\
&u_{M[i]}\ast u_{N[j]}=q^{(-1)^{i-j}\lr{M,N}} u_{N[j]}\ast u_{M[i]}, \quad i-j>1.
\end{flalign}
\end{proposition}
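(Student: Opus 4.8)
The plan is to exploit that $\A$ is hereditary, which makes $D^b(\A)$ transparent: every object is isomorphic to a finite direct sum $\bigoplus_{i\in\ZZ}M_i[i]$ with $M_i\in\A$, and $\Hom_{D^b(\A)}(M,N[k])$ vanishes unless $k\in\{0,1\}$, where it returns $\Hom_\A(M,N)$ and $\Ext^1_\A(M,N)$ respectively. Granting Toën's associativity and unitality (the unit being $u_0$), I would first observe that the $u_{M[i]}$ generate $\mathcal{D}\mathcal{H}_q(\A)$: the increasing-degree products established below reproduce $u_{\bigoplus_i M_i[i]}$ up to an invertible scalar, so the monomials $u_{M_1[i_1]}\ast\cdots\ast u_{M_r[i_r]}$ span. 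It then remains to verify the three displayed relations by direct computation from the definition of $\ast$, organised according to the gap $i-j$ of the shift degrees.

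For the equal-degree relation, the autoequivalence $[i]$ gives $\Hom_{D^b(\A)}(M[i],N[i])\cong\Hom_\A(M,N)$ and $\Ext^1_{D^b(\A)}(M[i],N[i])\cong\Ext^1_\A(M,N)$, with all higher $\Hom$-groups vanishing; the denominator collapses to $|\Hom_\A(M,N)|$ and every cone is of the form $L[i]$ with $L$ a middle term of an extension of $M$ by $N$, so the product is the twisted Hall product of $\A$ transported to degree $i$. For the far-apart relation, I would compute $\Ext^1_{D^b(\A)}(M[i],N[j])=\Hom_{D^b(\A)}(M,N[j-i+1])$, which vanishes when $i-j>1$, as does the reverse group; hence both $u_{M[i]}\ast u_{N[j]}$ and $u_{N[j]}\ast u_{M[i]}$ are scalar multiples of the single basis vector $u_{M[i]\oplus N[j]}$. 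The point to watch is that $\diamond$ is \emph{not} symmetric: its denominators $\prod_{k\geq0}|\Hom_{D^b(\A)}(M[i+k],N[j])|^{(-1)^k}$ differ in the two orders by $q^{(-1)^{i-j}\lr{N,M}}$, and combining this with the twist $q^{\lr{M[i],N[j]}}=q^{(-1)^{i-j}\lr{M,N}}$ yields exactly the commutation factor $q^{(-1)^{i-j}\lr{M,N}}$.

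The main obstacle is the adjacent relation, where $\Ext^1_{D^b(\A)}(M[i+1],N[i])=\Hom_{D^b(\A)}(M[i+1],N[i+1])=\Hom_\A(M,N)$ is genuinely nonzero. The crux is a cone computation: a morphism $M[i+1]\to N[i+1]$ is a shift of some $f\colon M\to N$ in $\A$, and since $\A$ is hereditary the cone of $f$ splits as $(\Coker f)[i]\oplus(\Ker f)[i+1]$. Thus the cones occurring are precisely $Y[i]\oplus X[i+1]$ with $X\cong\Ker f$ and $Y\cong\Coker f$, and the number of $f$ giving a prescribed pair is $|{}_X\Hom_\A(M,N)_Y|$; after checking that the relevant denominator is trivial, this gives $u_{M[i+1]}\ast u_{N[i]}=q^{-\lr{M,N}}\sum_{[X],[Y]}|{}_X\Hom_\A(M,N)_Y|\,u_{Y[i]\oplus X[i+1]}$. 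I would then verify the auxiliary identity $u_{Y[i]}\ast u_{X[i+1]}=u_{Y[i]\oplus X[i+1]}$ (the increasing-degree special case of the previous analysis, in which twist and denominator cancel) to recast the sum in the stated form. The careful bookkeeping of these denominators, twists, and the kernel--cokernel count is where the real work lies. Finally, to see that the three relations give a complete presentation rather than merely hold, I would use them to rewrite any monomial in normal form $\prod_i u_{M_i[i]}$ with strictly increasing $i$ --- the adjacent relation being exactly what moves a higher shift past an adjacent lower one --- and match these normal forms bijectively against the basis $\{u_{X_\bullet}\mid X_\bullet\in\Iso(D^b(\A))\}$.
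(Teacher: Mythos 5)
You cannot be compared against the paper's own proof here, because the paper gives none: this proposition is quoted as a known result from \cite{Toen2006} (it is To\"en's presentation of the derived Hall algebra in the hereditary case) and is used in Section 8 purely as a black box. What you have produced is therefore a reconstruction of the cited proof, and it is correct. Your three case computations check out. For equal shifts, only $k=0$ survives in the denominator (since $\Hom_{D^b(\A)}(M,N[-k])=0$ for $k>0$), and the cone of a class in $\Hom_{D^b(\A)}(M[i],N[i+1])\cong\Ext^1_{\A}(M,N)$ with middle term $L$ is $L[i+1]$, so the product is the twisted Hall product of $\A$ placed in degree $i$. For adjacent shifts, the splitting of complexes over a hereditary category into their cohomologies gives cones $(\Coker f)[i+1]\oplus(\Ker f)[i+2]$, i.e. extension objects $Y[i]\oplus X[i+1]$ counted by $|{}_X\Hom_{\A}(M,N)_Y|$; the denominator is trivial, the twist is $q^{\lr{M[i+1],N[i]}}=q^{-\lr{M,N}}$, and your auxiliary identity $u_{Y[i]}\ast u_{X[i+1]}=u_{Y[i]\oplus X[i+1]}$ holds because the twist $q^{-\lr{Y,X}}$ exactly cancels the denominator $|\Ext^1_{\A}(Y,X)|\,|\Hom_{\A}(Y,X)|^{-1}=q^{-\lr{Y,X}}$. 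For $i-j>1$ both relevant $\Ext^1$-groups vanish, and the asymmetry of the denominators ($1$ in one order, $q^{(-1)^{i-j}\lr{N,M}}$ in the other) combines with the two twists to give precisely the commutation factor $q^{(-1)^{i-j}\lr{M,N}}$, as you state. Two points are left schematic, both standard: first, your ``up to an invertible scalar'' can be sharpened --- the same twist-versus-denominator cancellation shows that strictly increasing products satisfy $u_{M_1[i_1]}\ast\cdots\ast u_{M_r[i_r]}=u_{M_1[i_1]\oplus\cdots\oplus M_r[i_r]}$ on the nose, which is exactly what matches your normal forms with the basis $\{u_{X_\bullet}\}$; second, the completeness of the presentation needs the rewriting to terminate (each use of the second or third relation strictly decreases the number of inverted pairs of shift degrees, while the first shortens the word), after which surjectivity from the abstractly presented algebra together with linear independence of the images of the normal forms closes the argument.
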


Let $C_{\A}$ be the subcategory of $D^b(\A)$ consisting of objects $M\oplus P[1]$ with $M\in\A,P\in\P$.
Let $\mathcal {D}\mathcal {H}_q^c(\A)$ be the submodule of $\mathcal {D}\mathcal {H}_q(\A)$ spanned by all elements $u_{X_\bullet}$ with $X_\bullet\in C_{\A}$.
\begin{lemma}~\label{l:extension-closed}
The submodule $\mathcal {D}\mathcal {H}_q^c(\A)$ is a subalgebra of $\mathcal {D}\mathcal {H}_q(\A)$.
\end{lemma}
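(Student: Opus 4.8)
The plan is to show that the spanning set $\{u_{X_\bullet} \mid X_\bullet \in C_\A\}$ is closed under the twisted multiplication. Since the twist $q^{\lr{X_\bullet,Y_\bullet}}$ is merely a scalar, it suffices to control which basis elements $u_{Z_\bullet}$ occur in an untwisted product $u_{X_\bullet}\diamond u_{Y_\bullet}$ with $X_\bullet,Y_\bullet\in C_\A$. By the definition of the derived Hall product, a term $u_{Z_\bullet}$ occurs with nonzero coefficient only if there is a morphism $f\colon X_\bullet\to Y_\bullet[1]$ whose cone is isomorphic to $Z_\bullet[1]$; equivalently, only if $Z_\bullet$ sits as the third vertex of a triangle $Y_\bullet\to Z_\bullet\to X_\bullet\xrightarrow{f}Y_\bullet[1]$ in $D^b(\A)$. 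Thus the whole statement reduces to the following closure property: whenever $X_\bullet,Y_\bullet\in C_\A$ and $Z_\bullet$ completes such a triangle, then $Z_\bullet\in C_\A$.

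To verify this closure property I would pass to cohomology, exploiting that $\A$ is hereditary. Writing $X_\bullet=M\oplus P[1]$ and $Y_\bullet=N\oplus Q[1]$ with $M,N\in\A$ and $P,Q\in\P$, the only nonzero cohomologies are $H^{-1}(X_\bullet)=P$, $H^0(X_\bullet)=M$ and $H^{-1}(Y_\bullet)=Q$, $H^0(Y_\bullet)=N$. Because $\A$ is hereditary, every object of $D^b(\A)$ is the direct sum of its shifted cohomologies, so an object lies in $C_\A$ exactly when its cohomology is concentrated in degrees $0$ and $-1$ with $H^{-1}\in\P$. I would then read off the long exact cohomology sequence attached to $Y_\bullet\to Z_\bullet\to X_\bullet\to Y_\bullet[1]$. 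Degree counting forces $H^i(Z_\bullet)=0$ for $i\neq -1,0$, and exactness exhibits $H^0(Z_\bullet)$ as an extension of $M$ by $\Coker(g)$, hence an object of $\A$, and $H^{-1}(Z_\bullet)$ as an extension $0\to Q\to H^{-1}(Z_\bullet)\to \Ker(g)\to 0$, where $g=H^{-1}(f)\colon P\to N$ is the connecting map.

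The main point, and the step I expect to need the most care, is to show $H^{-1}(Z_\bullet)\in\P$. Here I would invoke two consequences of heredity: first, $\Ker(g)$ is a submodule of the projective module $P$, and submodules of projectives are projective in a hereditary category, so $\Ker(g)\in\P$; second, an extension of one projective by another is again projective, which follows from the long exact sequence obtained by applying $\Hom_\A(-,W)$ to $0\to Q\to H^{-1}(Z_\bullet)\to \Ker(g)\to 0$, since $\Ext^1_\A(\Ker(g),W)$ and $\Ext^1_\A(Q,W)$ both vanish, forcing $\Ext^1_\A(H^{-1}(Z_\bullet),W)=0$ for all $W$. Consequently $H^{-1}(Z_\bullet)\in\P$, the cohomology of $Z_\bullet$ has the required shape, and $Z_\bullet\in C_\A$, completing the multiplicative closure. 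I would close by noting that the zero object lies in $C_\A$, so the unit $u_0$ belongs to $\mathcal{D}\mathcal{H}_q^c(\A)$; together with the multiplicative closure this shows that $\mathcal{D}\mathcal{H}_q^c(\A)$ is a subalgebra of $\mathcal{D}\mathcal{H}_q(\A)$.
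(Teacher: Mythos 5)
Your proof is correct, and it follows the same overall strategy as the paper's --- reduce the statement to the claim that $C_{\A}$ is closed under extensions (triangles) in $D^b(\A)$, then analyze the long exact cohomology sequence using heredity --- but it diverges in one genuine way. The paper first invokes the Peng--Xiao splitting lemma \cite[Lemma 2.5]{PX}: since $\Hom_{D^b(\A)}(M[-1]\oplus P,Q[1])=0$, the summand $Q[1]$ of the first term splits off the middle term of the triangle, so after writing $Y\cong Q\oplus Y'$ one is left with the clean five-term exact sequence $0\to Y'\to P\to N\to X\to M\to 0$, and projectivity of $Y'$ is immediate because it is a submodule of $P$. You instead work with the unsimplified long exact sequence, which presents $H^{-1}(Z_\bullet)$ as an extension $0\to Q\to H^{-1}(Z_\bullet)\to \Ker(g)\to 0$ with $g=H^{-1}(f)\colon P\to N$; you then need two facts, namely that $\Ker(g)\subseteq P$ is projective by heredity and that an extension of a projective by a projective is projective (indeed such an extension splits, since $\Ext^1_{\A}(\Ker(g),Q)=0$, which is even quicker than your $\Hom_{\A}(-,W)$ argument). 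The trade-off is that your route is self-contained --- it avoids citing the triangle-summand lemma of Peng--Xiao --- at the cost of one extra elementary step, while the paper's route localizes all the work into that external lemma and makes the final projectivity argument one line. Both correctly use that heredity lets one write every object of $D^b(\A)$, in particular the unknown extension term $Z_\bullet$, as a direct sum of its shifted cohomologies, and your explicit remark about the unit $u_0$ is a harmless addition the paper leaves implicit.
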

\begin{proof}
We only need to prove that the subcategory $C_{\A}$ is closed under extensions. Let
\begin{equation}\label{zhengy}
M[-1]\oplus P\rightarrow N\oplus Q[1]\rightarrow X_\bullet\rightarrow M\oplus P[1]\end{equation}
be any triangle in $D^b(\A)$ with $M,N\in\A$ and $P,Q\in\P$. By considering the long exact sequence of homology groups induced by (\ref{zhengy}), we obtain that the $i$-th homology group of $X_\bullet$ vanishes unless $i=-1,0$. Hence, we obtain that $X_\bullet\cong X\oplus Y[1]$ in $D^b(\A)$ for some $X,Y\in\A$, we need to prove that $Y\in\P$.

Since $\Hom(M[-1]\oplus P,Q[1])=0$, by \cite[Lemma 2.5]{PX}, we know that $Q$ is a direct summand of $Y$. Let $Y\cong Q\oplus Y'$ and $M[-1]\oplus P\rightarrow N\rightarrow X\oplus Y'[1]\rightarrow M\oplus P[1]$ be a triangle, then we have a long exact sequence in homologies
$$0\rightarrow Y'\longrightarrow P\longrightarrow N\longrightarrow X\longrightarrow M\rightarrow0.$$ Since $\A$ is hereditary, we get that $Y'$ is projective. Thus, $Y$ is projective and $X_\bullet\in C_{\A}$.
\end{proof}

Using Proposition \ref{twistderived}, we obtain the following
\begin{proposition}\label{dzscgx}
The subalgebra $\mathcal {D}\mathcal {H}_q^c(\A)$ is generated by the elements in $\{u_{M}, u_{P[1]}~|~M\in\A, P\in\P\}$, and the following relations
\begin{flalign}
&u_{P[1]}\ast u_{Q[1]}=
u_{(P\oplus Q)[1]}
=u_{Q[1]}\ast u_{P[1]};\\&
u_{M}\ast u_{N}=q^{\lr{M,N}}\sum_{[L]}\frac{|\mathrm{Ext}_{\A}^{1}(M,N)_{L}|}{|\mathrm{Hom}_{\A}(M,N)|}u_L;\\
&u_{M}\ast u_{P[1]}=u_{M\oplus
P[1]};\\
&u_{P[1]}\ast u_{M}=q^{-\lr{P,M}}
\sum\limits_{[B],[Q]}|{}_Q\Hom_{\A}(P,M)_B|u_{B\oplus Q[1]};\end{flalign}
for any $M,N\in\A$ and $P,Q\in\P$.
\end{proposition}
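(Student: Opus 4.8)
The plan is to read off each of the four relations by specialising the three relations of Proposition~\ref{twistderived} to objects of $C_{\A}$, supplemented by a single direct computation in $\mathcal {D}\mathcal {H}_q(\A)$; the hereditarity of $\A$ (global dimension at most one) will be invoked at each step. By Lemma~\ref{l:extension-closed} the set $\{u_{M\oplus P[1]}\mid M\in\A,\ P\in\P\}$ is a basis of $\mathcal {D}\mathcal {H}_q^c(\A)$, so it suffices to prove the four formulas; once established, they reduce any product of the proposed generators to a single basis element, which together with the linear independence of the basis yields the asserted presentation.

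First I would prove the relation $u_{M}\ast u_{P[1]}=u_{M\oplus P[1]}$ directly from the definition of the derived Hall product, since this is the ``wrong-order'' cross-degree product not covered by Proposition~\ref{twistderived}. The structure constants are indexed by triangles $P[1]\to Z_\bullet\to M\to P[2]$, that is, by $\Hom_{D^b(\A)}(M,P[2])=\Ext^2_{\A}(M,P)$, which vanishes because $\A$ is hereditary; hence the only contribution is the split object $Z_\bullet=M\oplus P[1]$. The surviving denominator is $\prod_{i\geq 0}|\Hom_{D^b(\A)}(M[i],P[1])|^{(-1)^i}=|\Ext^1_{\A}(M,P)|/|\Hom_{\A}(M,P)|$, while the Euler twist contributes $q^{\lr{M,P[1]}}=|\Ext^1_{\A}(M,P)|/|\Hom_{\A}(M,P)|$ via the identity $\lr{M[i],N[j]}=(-1)^{i-j}\lr{M,N}$; the two factors cancel and the relation follows. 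As every basis element $u_{M\oplus P[1]}$ is thereby expressed in terms of $u_{M}$ and $u_{P[1]}$, the elements $\{u_{M},u_{P[1]}\}$ generate $\mathcal {D}\mathcal {H}_q^c(\A)$.

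Next, the formula for $u_{M}\ast u_{N}$ is exactly the first relation of Proposition~\ref{twistderived} at $i=0$. The formula $u_{P[1]}\ast u_{Q[1]}=u_{(P\oplus Q)[1]}$ is the same first relation at $i=1$: since $\Ext^1_{\A}(P,Q)=0$ for projective $P,Q$, only the split term $L=P\oplus Q$ survives, and its coefficient $q^{\lr{P,Q}}/|\Hom_{\A}(P,Q)|$ equals $1$ because $\lr{P,Q}=\dim_k\Hom_{\A}(P,Q)$; interchanging $P$ and $Q$ gives the same element, yielding commutativity. Finally, $u_{P[1]}\ast u_{M}$ comes from the second relation of Proposition~\ref{twistderived} at $i=0$, which gives $q^{-\lr{P,M}}\sum_{[X],[Y]}|{}_X\Hom_{\A}(P,M)_Y|\,u_{Y}\ast u_{X[1]}$. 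Here each index $X$ is the kernel of a morphism $P\to M$, hence a submodule of the projective $P$, hence itself projective because $\A$ is hereditary; thus the relation of the second paragraph applies and rewrites $u_{Y}\ast u_{X[1]}=u_{Y\oplus X[1]}$, which after renaming $X,Y$ to $Q,B$ is precisely the stated formula.

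The main obstacle is not any single deep step but the careful bookkeeping in the second paragraph: I must track the Euler form $\lr{M,P[1]}$ and the denominator $\prod_{i\geq 0}|\Hom_{D^b(\A)}(M[i],P[1])|^{(-1)^i}$ so that they cancel exactly, and in the last relation I must verify that every kernel $X$ appearing is genuinely projective before invoking the already-established formula $u_{Y}\ast u_{X[1]}=u_{Y\oplus X[1]}$. Both verifications rest squarely on $\A$ having global dimension at most one, which is the structural hypothesis doing all the real work.
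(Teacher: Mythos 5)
Your proposal is correct and follows essentially the same route as the paper: the paper's proof also reduces everything to Proposition~\ref{twistderived} together with the cross-degree splitting identity $u_{M[i]}\ast u_{N[i+1]}=u_{M[i]\oplus N[i+1]}$, which is exactly the relation you establish by direct computation (at $i=0$) via the vanishing of $\Ext^2_{\A}$ and the cancellation of the Euler twist against the homomorphism denominator. Your additional checks --- the projectivity of the kernels $X\subset P$ in the last relation and the normal-form argument for the presentation --- are details the paper leaves implicit, but they do not change the method.
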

\begin{proof}
By Proposition \ref{twistderived}, we only need to note that for any $M, N\in\A$ and $i\in\mathbb{Z}$, the following identity
\begin{equation}u_{M[i]}\ast u_{N[i+1]}=u_{M[i]\oplus N[i+1]}\end{equation}
holds in $\mathcal {D}\mathcal {H}_q(\A)$.
\end{proof}

Combining Proposition \ref{dzscgx} with Theorem \ref{ydygx}, we obtain the following
\begin{corollary}\label{dhtth}
There exists an embedding of algebras
$$\xymatrix{\varphi:\mathcal {D}\mathcal {H}_q^c(\A)\ar@{^{(}->}[r]&\M\H(\A)}$$
defined on generators by $u_M\mapsto\X_M$ and $u_{P[1]}\mapsto\X_{P[1]}.$
\end{corollary}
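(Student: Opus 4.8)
The plan is to read off both algebras from their explicit presentations and then compare them term by term. Proposition~\ref{dzscgx} presents $\mathcal{DH}_q^c(\A)$ by the generators $\{u_M, u_{P[1]} \mid M\in\A,\,P\in\P\}$ subject to exactly four families of relations, while Theorem~\ref{ydygx} presents $\M\H(\A)$; the subalgebra of $\M\H(\A)$ generated by $\{\X_M, \X_{P[1]}\}$ is governed by relations (\ref{h3})--(\ref{h6}) alone, since (\ref{h1}) and (\ref{h2}) involve the group-like elements $K_\alpha$ that lie outside the image of $\varphi$. First I would observe that under the assignment $u_M\mapsto\X_M$ and $u_{P[1]}\mapsto\X_{P[1]}$ the four relations of Proposition~\ref{dzscgx} are carried \emph{verbatim} onto (\ref{h3}), (\ref{h4}), (\ref{h5}) and (\ref{h6}) respectively: the Hall numbers $F^L_{MN}$, the Euler twists $q^{\lr{M,N}}$ and $q^{-\lr{P,M}}$, and the cardinalities $|{}_Q\Hom_\A(P,M)_B|$ all coincide on the nose. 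Consequently $\varphi$ extends to a homomorphism of algebras, and applying (\ref{h5}) gives the useful identity $\varphi(u_{M\oplus P[1]})=\varphi(u_M\ast u_{P[1]})=\X_M\ast\X_{P[1]}=\X_{M\oplus P[1]}$.

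It then remains to establish injectivity, and here I would pass to bases. By Lemma~\ref{l:extension-closed} together with the decomposition $X_\bullet=M\oplus P[1]$ valid for every $X_\bullet\in C_\A$, the set $\{u_{M\oplus P[1]}\}$ is a $\ZZ[v,v^{-1}]$-basis of $\mathcal{DH}_q^c(\A)$. On the target side, the construction of $\M\H(\A)$ as a localization of $\H_{\tw}(C_2(\P))$, combined with the uniqueness in Lemma~\ref{fenjie}, shows that $\{K_\alpha\ast\X_{M\oplus P[1]}\mid \alpha\in K(\A),\,M\in\A,\,P\in\P\}$ is a $\ZZ[v,v^{-1}]$-basis of $\M\H(\A)$; specializing to $\alpha=0$ shows in particular that the family $\{\X_{M\oplus P[1]}\}$ is $\ZZ[v,v^{-1}]$-linearly independent. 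Since $\varphi$ carries the basis $\{u_{M\oplus P[1]}\}$ bijectively onto this linearly independent family, it is injective, hence an embedding of algebras.

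The only genuinely delicate point is the well-definedness asserted in the first paragraph: one must ensure that the relations of Proposition~\ref{dzscgx} already determine the entire multiplication table on the basis $\{u_{M\oplus P[1]}\}$, rather than merely holding among the generators. The cleanest way to dispose of this is to argue at the level of structure constants. Every product $u_{M\oplus P[1]}\ast u_{N\oplus Q[1]}$ is reduced to a $\ZZ[v,v^{-1}]$-combination of basis elements using only the four relations of Proposition~\ref{dzscgx}; performing the identical sequence of reductions with $\X$ in place of $u$ is legitimate in $\M\H(\A)$ by (\ref{h3})--(\ref{h6}) and yields the same coefficients. Thus the structure constants of the two algebras on the matched bases agree, and the multiplicativity of $\varphi$ follows directly, with no appeal to a universal property of the presentation.
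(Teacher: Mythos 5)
Your proposal is correct and takes essentially the same approach as the paper, whose entire proof is the remark that Proposition~\ref{dzscgx} and Theorem~\ref{ydygx} exhibit matching relations (the four relations of Proposition~\ref{dzscgx} are (\ref{h3})--(\ref{h6}) verbatim), so that $u_M\mapsto\X_M$, $u_{P[1]}\mapsto\X_{P[1]}$ defines an algebra map which is injective because the elements $K_\alpha\ast\X_{M\oplus P[1]}$ form a basis of $\M\H(\A)$ coming from the localization construction and Lemma~\ref{fenjie}. Your structure-constant verification of well-definedness and your explicit linear-independence argument for injectivity are exactly the details the paper leaves implicit in the phrase ``Combining Proposition~\ref{dzscgx} with Theorem~\ref{ydygx}.''
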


The comultiplication $\Delta$ on $\M\H(\A)$ defined in (\ref{yucheng}) induces a comultiplication
$$\Delta:\mathcal {D}\mathcal {H}_q^{c}(\A)\longrightarrow \mathcal {D}\mathcal {H}_q^{c}(\A)\otimes \mathcal {D}\mathcal {H}_q^{c}(\A)$$
defined by
\begin{equation}\label{yucheng2}\Delta(u_{L\oplus P[1]}):=\sum\limits_{[M],[N]}
q^{\lr{\hat{M},\hat{N}-\hat{P}}}F_{MN}^L(u_M\otimes u_{N\oplus P[1]})
\end{equation} for any $L\in\A$ and $P\in\P$.
Let us define the multiplication $\ast$ on $\mathcal {D}\mathcal {H}_q^{c}(\A)\otimes\mathcal {D}\mathcal {H}_q^{c}(\A)$ by
\begin{equation}\label{xingc}
\begin{split}
&(u_{M\oplus P[1]}\otimes u_{N\oplus Q[1]})\ast(u_{U\oplus S[1]}\otimes u_{V\oplus T[1]}):=\\
&q^{(\hat{N}-\hat{Q},\hat{U}-\hat{S})+\lr{\hat{M}-\hat{P},\hat{V}-\hat{T}}}
(u_{M\oplus P[1]}\ast u_{U\oplus S[1]}\otimes u_{N\oplus Q[1]}\ast u_{V\oplus T[1]})
\end{split}
\end{equation} for any $M, N, U, V\in\A$ and $P, Q, S, T\in\P$.
In a similar way to Proposition \ref{sdsjg}, we obtain the following
\begin{proposition}
The map $\Delta:(\mathcal {D}\mathcal {H}_q^{c}(\A),\ast)\longrightarrow (\mathcal {D}\mathcal {H}_q^{c}(\A)\otimes\mathcal {D}\mathcal {H}_q^{c}(\A),\ast)$ is a homomorphism of algebras.
\end{proposition}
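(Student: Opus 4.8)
The plan is to follow the strategy of Proposition~\ref{sdsjg}: since $\mathcal{D}\mathcal{H}_q^c(\A)$ is presented by the generators $u_M$ ($M\in\A$) and $u_{P[1]}$ ($P\in\P$) subject to the four families of relations in Proposition~\ref{dzscgx}, and since $\Delta$ in~(\ref{yucheng2}) is a priori only a $\ZZ[v,v^{-1}]$-module map defined on the spanning elements $u_{L\oplus P[1]}$, it suffices to check that $\Delta$ is compatible with each of these relations, where the multiplication on the target $(\mathcal{D}\mathcal{H}_q^c(\A)\otimes\mathcal{D}\mathcal{H}_q^c(\A),\ast)$ is the one fixed in~(\ref{xingc}). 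First I would record the two immediate values $\Delta(u_{P[1]})=1\otimes u_{P[1]}$ and $\Delta(u_M)=\sum_{[A],[B]}q^{\lr{\hat A,\hat B}}F_{AB}^M\,u_A\otimes u_B$, which specialize~(\ref{yucheng2}) at $L=0$ and at $P=0$ respectively.

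With these in hand, the relations $u_{P[1]}\ast u_{Q[1]}=u_{(P\oplus Q)[1]}$ and $u_M\ast u_{P[1]}=u_{M\oplus P[1]}$ are routine: applying $\Delta$ to both sides and using~(\ref{xingc}) one finds that the factor $1\otimes u_{P[1]}$ simply appends $P[1]$ in the second tensor slot and produces precisely the shift $\hat B\mapsto\hat B-\hat P$ occurring in the exponent of~(\ref{yucheng2}), while the $q$-twist in~(\ref{xingc}) contributes the remaining symmetric corrections. For the relation $u_M\ast u_N=q^{\lr{M,N}}\sum_{[L]}\frac{|\Ext_{\A}^1(M,N)_L|}{|\Hom_{\A}(M,N)|}u_L$, the point is that the subalgebra of $\mathcal{D}\mathcal{H}_q^c(\A)$ generated by $\{u_M\mid M\in\A\}$ is a copy of $\H_{\tw}(\A)$, and the displayed formula for $\Delta(u_M)$ shows that $\Delta$ restricts on it to the Green comultiplication $\delta([L])=\sum q^{\lr{M,N}}F_{MN}^L[M]\otimes[N]$; hence compatibility with this relation is exactly Green's theorem (cf.~\cite{Gr95}), as in the treatment of the corresponding relation in Proposition~\ref{sdsjg}.

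The substantive step, and the one I expect to be the main obstacle, is the mixed relation $u_{P[1]}\ast u_M=q^{-\lr{P,M}}\sum_{[B],[Q]}|{}_Q\Hom_{\A}(P,M)_B|\,u_{B\oplus Q[1]}$. Here I would compute $\Delta(u_{P[1]})\ast\Delta(u_M)$ directly from~(\ref{xingc}) and~(\ref{yucheng2}) and compare it term by term with $q^{-\lr{P,M}}\sum_{[B],[Q]}|{}_Q\Hom_{\A}(P,M)_B|\,\Delta(u_{B\oplus Q[1]})$. Both sides expand into sums of Hall numbers weighted by powers of $q$, and the equality is forced by the associativity identity~(\ref{jiehe}) together with the counting formula~(\ref{xjs}) and the Riedtmann--Peng formula, after matching the $q$-exponents. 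The computation is essentially verbatim the one carried out for relation~(\ref{h6}) in the proof of Proposition~\ref{sdsjg}, with all $K_\alpha$-factors specialized to $\alpha=0$; so the difficulty is purely the bookkeeping of exponents rather than anything conceptually new.

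Finally, I would note a shortcut that sidesteps most of this bookkeeping: by Corollary~\ref{dhtth} the map $\varphi:\mathcal{D}\mathcal{H}_q^c(\A)\hookrightarrow\M\H(\A)$, $u_M\mapsto\X_M$, $u_{P[1]}\mapsto\X_{P[1]}$, is an embedding of algebras, and comparing~(\ref{xingc}) with the twisted multiplication on $\M\H(\A)\otimes\M\H(\A)$ shows that $\varphi\otimes\varphi$ is an injective algebra homomorphism. Since $\varphi(u_{L\oplus P[1]})=\X_{L\oplus P[1]}$, the defining formulas~(\ref{yucheng2}) and~(\ref{yucheng}) give $(\varphi\otimes\varphi)\circ\Delta=\Delta\circ\varphi$ on the spanning set. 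Thus the commutative square built from $\varphi$, $\varphi\otimes\varphi$ and the two copies of $\Delta$ reduces the claim to Proposition~\ref{sdsjg}: for $a,b\in\mathcal{D}\mathcal{H}_q^c(\A)$ one gets $(\varphi\otimes\varphi)\big(\Delta(ab)\big)=\Delta(\varphi(a))\,\Delta(\varphi(b))=(\varphi\otimes\varphi)\big(\Delta(a)\Delta(b)\big)$, and injectivity of $\varphi\otimes\varphi$ yields $\Delta(ab)=\Delta(a)\Delta(b)$.
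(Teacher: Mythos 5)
Your proposal is correct, and its core is the same as the paper's: the paper proves this proposition simply by declaring it to be ``in a similar way to Proposition~\ref{sdsjg}'', i.e.\ exactly your first three paragraphs --- check $\Delta$ against the presentation of Proposition~\ref{dzscgx}, handle the module--module relation by Green's formula, and redo the Hall-number computation of relation~(\ref{h6}) (via~(\ref{jiehe}) and~(\ref{xjs})) for the mixed relation, all $K_\alpha$-factors being absent. Your fourth paragraph, however, is a genuinely different and cleaner route that the paper does not take: since the twisting exponents in~(\ref{xingc}) and in the Section~3 multiplication on $\M\H(\A)\otimes\M\H(\A)$ coincide, and $\varphi(u_{M\oplus P[1]})=\X_{M\oplus P[1]}$ preserves the classes entering those exponents, the map $\varphi\otimes\varphi$ is an algebra homomorphism intertwining the two comultiplications, so the statement follows formally from Proposition~\ref{sdsjg} with no new computation. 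The only point you should make explicit there is why $\varphi\otimes\varphi$ is injective: injectivity of $\varphi$ alone does not suffice for tensor products over $\ZZ[v,v^{-1}]$ in general, but here both algebras are free modules and $\varphi$ sends the basis $\{u_{M\oplus P[1]}\}$ to the elements $\X_{M\oplus P[1]}=K_{-\hat{P}_M}\ast[C_M\oplus Z_P]$, which form part of a $\ZZ[v,v^{-1}]$-basis of $\M\H(\A)$ (unit multiples of distinct basis elements $K_\alpha\ast[C_M\oplus Z_P]$), so $\varphi$ is a split injection and $\varphi\otimes\varphi$ maps a basis injectively into a basis. With that remark added, the shortcut is a complete proof and arguably preferable to the paper's, since it isolates all the bookkeeping in the already-proven Proposition~\ref{sdsjg}; its only cost is that it is less self-contained, relying on Corollary~\ref{dhtth} rather than on the derived Hall algebra relations directly.
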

\begin{remark}
Let $D^{[-1,0]}(\A)$ be the subcategory of $D^b(\A)$ consisting of objects $L\oplus X[1]$ with $L,X\in\A$.
Let $\mathcal {D}\mathcal {H}_q^{[-1,0]}(\A)$ be the submodule of $\mathcal {D}\mathcal {H}_q(\A)$ spanned by all elements $u_{X_\bullet}$ with $X_\bullet\in D^{[-1,0]}(\A)$. Then $\mathcal {D}\mathcal {H}_q^{[-1,0]}(\A)$ is a subalgebra of $\mathcal {D}\mathcal {H}_q(\A)$, which is related to the {\em Heisenberg double Hall algebra} defined in \cite{Kap}. Let $$\Delta:\mathcal {D}\mathcal {H}_q^{[-1,0]}(\A)\longrightarrow\mathcal {D}\mathcal {H}_q^{[-1,0]}(\A)\otimes\mathcal {D}\mathcal {H}_q^{[-1,0]}(\A)$$ be the homomorphism of modules defined by
\begin{equation}\Delta(u_{L\oplus X[1]}):=\sum\limits_{[M],[N]}
q^{\lr{\hat{M},\hat{N}-\hat{X}}}F_{MN}^L(u_M\otimes u_{N\oplus X[1]})
\end{equation} for any $L, X\in\A$. Consider the multiplication $\ast$ which is similar to (\ref{xingc}) on $\mathcal {D}\mathcal {H}_q^{[-1,0]}(\A)\otimes\mathcal {D}\mathcal {H}_q^{[-1,0]}(\A)$. In a similar way to the proof of Proposition \ref{sdsjg}, we can obtain that the map $\Delta:(\mathcal {D}\mathcal {H}_q^{[-1,0]}(\A),\ast)\longrightarrow(\mathcal {D}\mathcal {H}_q^{[-1,0]}(\A)\otimes\mathcal {D}\mathcal {H}_q^{[-1,0]}(\A),\ast)$ is also a homomorphism of algebras.
\end{remark}
\subsection{Integration map on the derived Hall subalgebra $\mathcal {D}\mathcal {H}_q^c(\A)$}
Let $Q$, $\widetilde{Q}$ be the same as given in Section 5. Let $\A$ be the category of finite dimensional left $\mathfrak{S}$-modules. In this subsection, we define an integration map on the derived Hall subalgebra $\mathcal {D}\mathcal {H}_q^c(\A)$. For each positive integer $t$,
let $\mathcal{T}_t$ be the $\ZZ[v,v^{-1}]$-algebra with a basis $\{X^{\alpha}~|~\alpha\in \mathbb{Z}^t\}$ and
multiplication defined by
\[X^{\alpha}\diamond X^{\beta}=X^{\alpha+\beta}.\]
It is well known that there is an isomorphism of groups
$$f: K(D^b(\A))\longrightarrow K(\A)$$ defined by $f(\hat{X}_\bullet)=\sum\limits_{i\in\mathbb{Z}}(-1)^i\Dim X_i=:\Dim {X}_\bullet.$ Moreover, $\lr{X_\bullet,Y_\bullet}=\lr{\Dim {X}_\bullet,\Dim {Y}_\bullet}.$

\begin{lemma}\label{ceuler}
For any objects $X_\bullet, Y_\bullet\in C_{\A}$, we have that
$\dim_k\Hom_{D^b(\A)}(X_\bullet,Y_\bullet[i])=0$
if $|i|>1$.
\end{lemma}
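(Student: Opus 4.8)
The plan is to decompose both arguments according to the definition of $C_\A$ and then reduce each resulting $\Hom$-space in $D^b(\A)$ to an $\Ext$-group of the hereditary category $\A$. Writing $X_\bullet=M\oplus P[1]$ and $Y_\bullet=N\oplus Q[1]$ with $M,N\in\A$ and $P,Q\in\P$, biadditivity of $\Hom_{D^b(\A)}(-,-)$ together with $Y_\bullet[i]=N[i]\oplus Q[i+1]$ gives a decomposition of $\Hom_{D^b(\A)}(X_\bullet,Y_\bullet[i])$ into four summands. The first step is to rewrite each of them, using the shift isomorphism $\Hom_{D^b(\A)}(P[1],V[j])\cong\Hom_{D^b(\A)}(P,V[j-1])$ and the standard identification $\Hom_{D^b(\A)}(U,V[j])=\Ext^{j}_{\A}(U,V)$ for $U,V\in\A$ (which in particular vanishes for $j<0$ since $U,V$ lie in the heart).

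Carrying this out, the four summands become $\Ext^{i}_{\A}(M,N)$, $\Ext^{i+1}_{\A}(M,Q)$, $\Ext^{i-1}_{\A}(P,N)$ and $\Ext^{i}_{\A}(P,Q)$. Now I would feed in the two structural inputs available to us. First, since $\A$ is hereditary, $\Ext^{j}_{\A}(-,-)=0$ whenever $j\geq 2$; combined with the vanishing for $j<0$ this confines each summand to $j\in\{0,1\}$, i.e. to a short interval of values of $i$. Second, since $P$ and $Q$ are projective, $\Ext^{1}_{\A}(P,-)=0$ and $\Ext^{1}_{\A}(Q,-)=0$, which removes the top degree from the summands headed by $P$ and $Q$. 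Concretely the nonvanishing ranges become $i\in\{0,1\}$ for the first summand, $i\in\{-1,0\}$ for the second, $i=1$ for the third (the a priori possible $i=2$ being precisely the term $\Ext^{1}_{\A}(P,N)$ killed by projectivity), and $i=0$ for the fourth.

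It then remains only to read off the conclusion: for $i\geq 2$ and for $i\leq -2$ every one of the four summands falls outside its admissible range, so $\Hom_{D^b(\A)}(X_\bullet,Y_\bullet[i])=0$ whenever $|i|>1$, as claimed. I do not expect a real obstacle, since the argument is essentially a bookkeeping of the degree shifts introduced by the $[1]$ in the second components of objects of $C_\A$; the only substantive point—and the step I would flag as the heart of the matter—is the use of projectivity of $P$ to annihilate $\Ext^{1}_{\A}(P,N)$, for this is the single contribution that heredity alone would otherwise allow to survive at $|i|=2$ and thereby violate the bound.
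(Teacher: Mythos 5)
Your proof is correct and follows essentially the same route as the paper: decompose $X_\bullet=M\oplus P[1]$, $Y_\bullet=N\oplus Q[1]$, reduce the four Hom-summands to $\Ext$-groups over $\A$, and kill them using heredity for $|i|\geq 2$ together with projectivity of $P$ for the one term $\Ext^1_{\A}(P,N)$ surviving at $i=2$ (the paper treats $i<-1$ as immediate and compresses the rest into one line). The only cosmetic slip is your remark that projectivity of $Q$ removes a top degree: $Q$ never occurs as a source in any summand, so $\Ext^1_{\A}(Q,-)=0$ is never used, but your stated nonvanishing ranges are all correct, so nothing is affected.
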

\begin{proof}
Let $X_\bullet=M\oplus P[1]$ and $Y_\bullet=N\oplus Q[1]$ with $M,N\in\A$ and $P,Q\in\P$.

If $i>1$, since $\A$ is hereditary and $P$ is projective, we obtain that \begin{equation*}\begin{split}\Hom_{D^b(\A)}(X_\bullet,Y_\bullet[i])&=\Hom_{D^b(\A)}(M\oplus P[1],N[i]\oplus Q[i+1])\\&\cong\Hom_{D^b(\A)}(P[1],N[i])\\&=0.\end{split}\end{equation*}

For $i<-1$, it is easy to see that
$\Hom_{D^b(\A)}(X_\bullet,Y_\bullet[i])=0.$
\end{proof}

\begin{proposition}\label{jfys}
The integration map $$\int:\mathcal {D}\mathcal {H}_q^c(\A)\longrightarrow\mathcal{T}_n,~~u_{X_\bullet}\mapsto X^{{\bf dim\,} {X}_\bullet}$$
is a homomorphism of algebras.
\end{proposition}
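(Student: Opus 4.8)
The plan is to reproduce, in the derived setting, the two-step mechanism already used for Proposition \ref{jfys1}: first pull the monomial out of the structure-constant sum using additivity of $\Dim$, and then check that the residual scalar—the twisting factor times the total of the structure constants—equals $1$. Concretely, for $X_\bullet,Y_\bullet\in C_{\A}$ I would begin from
\[\int(u_{X_\bullet}\ast u_{Y_\bullet})=q^{\lr{X_\bullet,Y_\bullet}}\sum_{[Z_\bullet]}\frac{|\Ext^1_{D^b(\A)}(X_\bullet,Y_\bullet)_{Z_\bullet}|}{\prod_{i\geq0}|\Hom_{D^b(\A)}(X_\bullet[i],Y_\bullet)|^{(-1)^i}}X^{\Dim Z_\bullet},\]
so that the target identity $\int(u_{X_\bullet}\ast u_{Y_\bullet})=X^{\Dim X_\bullet+\Dim Y_\bullet}=\int u_{X_\bullet}\diamond\int u_{Y_\bullet}$ reduces to showing that the scalar multiplying $X^{\Dim X_\bullet+\Dim Y_\bullet}$ is $1$.

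First I would record the additivity. Each $Z_\bullet$ contributing a nonzero structure constant is the desuspension of the cone of some $f\colon X_\bullet\to Y_\bullet[1]$, hence sits in a triangle $X_\bullet\to Y_\bullet[1]\to Z_\bullet[1]\to X_\bullet[1]$; applying $\Dim$ and using $\Dim(W[1])=-\Dim W$ gives $-\Dim Y_\bullet=\Dim X_\bullet-\Dim Z_\bullet$, i.e.\ $\Dim Z_\bullet=\Dim X_\bullet+\Dim Y_\bullet$ for every such $Z_\bullet$. The monomial therefore factors out and the scalar becomes $q^{\lr{X_\bullet,Y_\bullet}}S$, where
\[S=\frac{1}{\prod_{i\geq0}|\Hom_{D^b(\A)}(X_\bullet[i],Y_\bullet)|^{(-1)^i}}\sum_{[Z_\bullet]}|\Ext^1_{D^b(\A)}(X_\bullet,Y_\bullet)_{Z_\bullet}|.\]
Since the cones of the morphisms in $\Hom_{D^b(\A)}(X_\bullet,Y_\bullet[1])$ partition that Hom-space by isomorphism class, the numerator sum equals $|\Hom_{D^b(\A)}(X_\bullet,Y_\bullet[1])|$.

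The final step is the cancellation, and this is where Lemma \ref{ceuler} does the essential work. Because $X_\bullet,Y_\bullet\in C_{\A}$, the spaces $\Hom_{D^b(\A)}(X_\bullet,Y_\bullet[i])$ vanish once $|i|>1$, so via $\Hom_{D^b(\A)}(X_\bullet[i],Y_\bullet)\cong\Hom_{D^b(\A)}(X_\bullet,Y_\bullet[-i])$ the infinite product in $S$ truncates to $|\Hom_{D^b(\A)}(X_\bullet,Y_\bullet)|\,/\,|\Hom_{D^b(\A)}(X_\bullet,Y_\bullet[-1])|$, and the Euler form reduces to the three degrees $i=-1,0,1$, yielding
\[q^{\lr{X_\bullet,Y_\bullet}}=\frac{|\Hom_{D^b(\A)}(X_\bullet,Y_\bullet)|}{|\Hom_{D^b(\A)}(X_\bullet,Y_\bullet[-1])|\,|\Hom_{D^b(\A)}(X_\bullet,Y_\bullet[1])|}.\]
Multiplying $q^{\lr{X_\bullet,Y_\bullet}}$ by $S$, the four finite Hom-counts cancel in pairs and the product is $1$, which gives the homomorphism property. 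The only real obstacle is the bookkeeping of the derived structure constants—keeping the shifts $X_\bullet[i]$ against $Y_\bullet[-i]$ and the signs $(-1)^i$ consistent and confirming that the products are genuinely finite—and Lemma \ref{ceuler} is exactly the input that dissolves it by confining all relevant Hom-spaces to degrees $-1,0,1$.
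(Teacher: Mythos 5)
Your proposal is correct and takes essentially the same approach as the paper: both factor out the monomial $X^{\Dim X_\bullet+\Dim Y_\bullet}$ via additivity of $\Dim$ over triangles, identify $\sum_{[Z_\bullet]}|\Ext^1_{D^b(\A)}(X_\bullet,Y_\bullet)_{Z_\bullet}|$ with $|\Hom_{D^b(\A)}(X_\bullet,Y_\bullet[1])|$, and use Lemma \ref{ceuler} to collapse the remaining scalar to $1$. The only difference is bookkeeping: the paper first absorbs the denominator into the Euler form to get the prefactor $q^{\sum_{i>0}(-1)^i\dim_k\Hom_{D^b(\A)}(X_\bullet,Y_\bullet[i])}$ and then truncates, whereas you truncate all Hom-spaces to degrees $-1,0,1$ at the outset and cancel at the end.
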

\begin{proof} For any objects $X_\bullet, Y_\bullet\in C_{\A}$,
\begin{flalign*}\int u_{X_\bullet}\ast u_{Y_\bullet} &= q^{\lr{X_\bullet,Y_\bullet}}\sum\limits_{[{Z_\bullet}]}\frac{|\Ext^1_{D^b(\A)}({X_\bullet},{Y_\bullet})_{Z_\bullet}|}{\prod\limits_{i\geq0}|\Hom_{D^b(\A)}({X_\bullet}[i],{Y_\bullet})|^{(-1)^i}} X^{{\bf dim\,} Z_\bullet}\\
&=q^{\sum\limits_{i>0}(-1)^i{\rm dim}_k{\rm Hom}_{D^b(\A)}({X_\bullet},{Y_\bullet}[i])}\sum\limits_{[Z_\bullet]} |\Ext^1_{D^b(\A)}({X_\bullet},{Y_\bullet})_{Z_\bullet}| X^{{\bf dim\,} X_\bullet+{\bf dim\,} Y_\bullet}\\
&=X^{{\bf dim\,} X_\bullet+{\bf dim\,} Y_\bullet}\quad\quad\text{(by~Lemma~\ref{ceuler})}\\
&=X^{{\bf dim\,} X_\bullet}\diamond X^{{\bf dim\,} Y_\bullet}\\
&=\int u_{X_\bullet}\diamond\int u_{Y_\bullet}.\end{flalign*}
\end{proof}

\subsection{Quantum cluster characters via derived Hall subalgebras}
Let $Q, \widetilde{Q}$ be the same as given in Section~$5$. We keep the notations as in Section~$5$, in particular, we have $m\times n$ integral matrices $\widetilde{B}$ and $\widetilde{E}$. Let $\mathcal{A}$ (resp. $\widetilde{\mathcal{A}}$) be the category of finite dimensional left $\mathfrak{S}$ (resp. $\mathfrak{\widetilde{S}}$)-modules. We may identify $\mathcal{A}$ with the full subcategory of $\widetilde{\mathcal{A}}$ consisting of modules with supports on $Q$. For an $\mathfrak{S}$-module $X$ we also denote by $\bf{x}$ the dimension vector of $X$ viewed as an $\widetilde{\mathfrak{S}}$-module, since this should not cause confusion by the context. Let $R(\widetilde{Q})$ and $R'(\widetilde{Q})$ be the $m\times m$ matrices with the $i$-th row and $j$-th column elements given respectively by
$$r_{ij}=\dim_{\mathcal {D}_i}\Ext_{\widetilde{\mathfrak{S}}}^1(S_j,S_i)$$
and
$$r'_{ij}=\dim_{{\mathcal {D}_i}^{op}}\Ext_{\widetilde{\mathfrak{S}}}^1(S_i,S_j),$$
where $1\leq i,j\leq m$. Define $B(\widetilde{Q})=R'(\widetilde{Q})-R(\widetilde{Q})$,
$E(\widetilde{Q})=I_m-R'(\widetilde{Q})$ and $E'(\widetilde{Q})=I_m-R(\widetilde{Q})$.
Note that $\widetilde{B}$ is the submatrix of $B(\widetilde{Q})$ consisting of the first $n$ columns.

In what follows, we assume that there is a skew-symmetric $m\times m$ integral matrix $\Lambda$ such that \begin{equation}-\Lambda B(\widetilde{Q})=\operatorname{diag}\{d_1,\cdots, d_m\}.\end{equation}

As in Section $6$, we twist the multiplication on $\mathcal {D}\mathcal {H}_q^c(\widetilde{\A})$, and define $\mathcal {D}\mathcal {H}_\Lambda^c(\widetilde{\A})$ to be the same module as $\mathcal {D}\mathcal {H}_q^c(\widetilde{\A})$ but with the twisted
multiplication defined on basis elements by
{\begin{equation}
\begin{split}
   u_{M\oplus P[1]}\star u_{N\oplus Q[1]}:=
   v^{\Lambda(E'(\widetilde{Q})({\bf m}-{\bf p}),E'(\widetilde{Q})(\bf{n}
   -\bf{q}))}
   u_{M\oplus P[1]}\ast u_{N\oplus Q[1]},\end{split}
\end{equation}}
where $M, N\in\widetilde{\A}$ and $P, Q\in\P_{\widetilde{\A}}$. We also twist the multiplication on the tensor algebra $(\mathcal {D}\mathcal {H}_q^c(\widetilde{\A})\otimes\mathcal {D}\mathcal {H}_q^c(\widetilde{\A}),\ast)$ by defining
\begin{equation*}\begin{split}(u_{M\oplus P[1]}\otimes u_{N\oplus Q[1]})\star(u_{U\oplus S[1]}\otimes u_{V\oplus T[1]}):=
v^{\lambda}
(u_{M\oplus P[1]}\otimes u_{N\oplus Q[1]})\ast(u_{U\oplus S[1]}\otimes u_{V\oplus T[1]}),
\end{split}
\end{equation*}where $\lambda=\Lambda(E'(\widetilde{Q})({\bf m}-{\bf p}+{\bf n}-{\bf q}),E'(\widetilde{Q})(\bf{u}-\bf{s}+\bf{v}-\bf{t}))$, $M, N,U,V\in\widetilde{\A}$ and $P, Q,S,T\in\P_{\widetilde{\A}}$. In a similar way to Lemma \ref{ts1}, we obtain the following
\begin{lemma}
The map $\Delta:(\mathcal {D}\mathcal {H}_\Lambda^c(\widetilde{\A}),\star)\longrightarrow (\mathcal {D}\mathcal {H}_q^c(\widetilde{\A})\otimes\mathcal {D}\mathcal {H}_q^c(\widetilde{\A}),\star)$ is a homomorphism of algebras.
\end{lemma}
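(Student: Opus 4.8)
The plan is to follow the proof of Lemma~\ref{ts1} in spirit, the only new input being that the comultiplication $\Delta$ defined as in~(\ref{yucheng2}) is homogeneous for the $\mathbb{Z}^m$-grading that governs the twist. First I would record this homogeneity. In every summand $u_M\otimes u_{N\oplus P[1]}$ occurring in $\Delta(u_{L\oplus P[1]})$ the Hall number $F_{MN}^L$ is nonzero only if there is a short exact sequence $0\to N\to L\to M\to 0$, so that $\mathbf{m}+\mathbf{n}=\mathbf{l}$ as dimension vectors of $\widetilde{\mathfrak{S}}$-modules. Hence the total grading of the term $u_M\otimes u_{N\oplus P[1]}$, namely $E'(\widetilde{Q})\mathbf{m}+E'(\widetilde{Q})(\mathbf{n}-\mathbf{p})=E'(\widetilde{Q})(\mathbf{l}-\mathbf{p})$, agrees with the grading $E'(\widetilde{Q})(\mathbf{l}-\mathbf{p})$ of $u_{L\oplus P[1]}$. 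Thus $\Delta$ sends a homogeneous element to a sum of homogeneous elements of the same degree.

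Next I would invoke the untwisted statement already at hand, namely the analogue for $\widetilde{\A}$ of the Proposition asserting that $\Delta\colon(\mathcal{D}\mathcal{H}_q^c(\widetilde{\A}),\ast)\to(\mathcal{D}\mathcal{H}_q^c(\widetilde{\A})\otimes\mathcal{D}\mathcal{H}_q^c(\widetilde{\A}),\ast)$ is a homomorphism for the untwisted products. By construction both twisted products $\star$ differ from the corresponding $\ast$ only by a scalar $v^{\Lambda(-,-)}$ whose two arguments are exactly the gradings of the two factors. The homogeneity just established shows that these gradings, and therefore the scalar, are left unchanged upon passing the two factors through $\Delta$.

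It then remains to run the same three-step computation as in Lemma~\ref{ts1}. Starting from $\Delta[u_{M\oplus P[1]}\star u_{N\oplus Q[1]}]$, I would first pull out the scalar $v^{\Lambda(E'(\widetilde{Q})(\mathbf{m}-\mathbf{p}),\,E'(\widetilde{Q})(\mathbf{n}-\mathbf{q}))}$ defining $\star$ on $\mathcal{D}\mathcal{H}_\Lambda^c(\widetilde{\A})$, then apply the untwisted homomorphism property to rewrite $\Delta(u_{M\oplus P[1]}\ast u_{N\oplus Q[1]})$ as $\Delta(u_{M\oplus P[1]})\ast\Delta(u_{N\oplus Q[1]})$, and finally reabsorb exactly the same scalar as the twist $v^{\lambda}$ on the tensor product, obtaining $\Delta(u_{M\oplus P[1]})\star\Delta(u_{N\oplus Q[1]})$. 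The one delicate point---the main, if minor, obstacle---is verifying that the scalar produced by the source twist coincides with the one reabsorbed by the target twist; this is exactly the grading identity $\mathbf{m}+\mathbf{n}=\mathbf{l}$ from the first step, after which the chain of equalities is purely formal.
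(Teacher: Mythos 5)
Your proposal is correct and takes essentially the same route as the paper: the paper proves this lemma simply by the phrase ``in a similar way to Lemma \ref{ts1}'', whose proof is exactly your three-step computation (pull out the source twist, apply the untwisted homomorphism property from the analogue of Proposition \ref{sdsjg}, reabsorb the same scalar as the target twist), resting on the homogeneity of $\Delta$. Your explicit verification of that homogeneity via $\mathbf{m}+\mathbf{n}=\mathbf{l}$ whenever $F_{MN}^L\neq 0$ is precisely the point the paper leaves implicit (it only remarks before Lemma \ref{ts1} that $\Delta$ is homogeneous), so nothing is missing.
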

We twist the multiplication on the tensor algebra of the torus $\mathcal{T}_m$ by defining
\begin{equation}(X^\alpha\otimes X^\beta)\star (X^\gamma\otimes X^\delta):=q^{\frac{1}{2}\Lambda(E'(\widetilde{Q})(\alpha+\beta), E'(\widetilde{Q})(\gamma+\delta))+(\beta, \gamma)+\langle \alpha, \delta\rangle}X^{\alpha+\gamma}\otimes X^{\beta+\delta}
\end{equation} for any $\alpha, \beta, \gamma, \delta\in\mathbb{Z}^m$.
In a similar way to Lemma \ref{ts2}, we obtain the following
\begin{lemma}
The map $\int\otimes\int: (\mathcal {D}\mathcal {H}_q^c(\widetilde{\A})\otimes\mathcal {D}\mathcal {H}_q^c(\widetilde{\A}),\star)\longrightarrow(\mathcal{T}_m\otimes\mathcal{T}_m,\star)$ is a homomorphism of algebras.
\end{lemma}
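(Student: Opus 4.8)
The plan is to reproduce the computation of Lemma~\ref{ts2} in the derived setting. Three ingredients drive the argument: the integration map $\int\colon\mathcal {D}\mathcal {H}_q^c(\widetilde{\A})\to\mathcal{T}_m$ is an algebra homomorphism for the untwisted products by Proposition~\ref{jfys}; on generators it takes the explicit form $\int u_{M\oplus P[1]}=X^{{\bf m}-{\bf p}}$, because $\Dim(M\oplus P[1])={\bf m}-{\bf p}$ in $\mathbb{Z}^m$; and $v^2=q$, so that $v^{\Lambda(-,-)}=q^{\frac{1}{2}\Lambda(-,-)}$. To compress the notation I set $\alpha={\bf m}-{\bf p}$, $\beta={\bf n}-{\bf q}$, $\gamma={\bf u}-{\bf s}$ and $\delta={\bf v}-{\bf t}$ for the four generators involved.

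First I would expand the left-hand side $(\int\otimes\int)\big[(u_{M\oplus P[1]}\otimes u_{N\oplus Q[1]})\star(u_{U\oplus S[1]}\otimes u_{V\oplus T[1]})\big]$. Stripping off the tensor twist contributes the scalar $v^{\lambda}$ with $\lambda=\Lambda(E'(\widetilde{Q})(\alpha+\beta),E'(\widetilde{Q})(\gamma+\delta))$, and unwinding the untwisted product~(\ref{xingc}) contributes $q^{(\beta,\gamma)+\lr{\alpha,\delta}}$ together with the factor $(u_{M\oplus P[1]}\ast u_{U\oplus S[1]})\otimes(u_{N\oplus Q[1]}\ast u_{V\oplus T[1]})$. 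Applying $\int\otimes\int$ and invoking the homomorphism property of $\int$ collapses each tensor slot to a single monomial, yielding
\[
v^{\lambda}\,q^{(\beta,\gamma)+\lr{\alpha,\delta}}\;X^{\alpha+\gamma}\otimes X^{\beta+\delta}.
\]
For the right-hand side, $\int\otimes\int$ sends the two inputs to $X^{\alpha}\otimes X^{\beta}$ and $X^{\gamma}\otimes X^{\delta}$, and the twisted product on $\mathcal{T}_m\otimes\mathcal{T}_m$ multiplies them to
\[
q^{\frac{1}{2}\Lambda(E'(\widetilde{Q})(\alpha+\beta),E'(\widetilde{Q})(\gamma+\delta))+(\beta,\gamma)+\lr{\alpha,\delta}}\;X^{\alpha+\gamma}\otimes X^{\beta+\delta}.
\]
Since $q^{\frac{1}{2}\Lambda}=v^{\Lambda}$, the two prefactors agree and the two expressions coincide.

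I expect no genuine obstacle here: exactly as in Lemma~\ref{ts2}, the two $\star$-products were defined so that the quadratic $\Lambda$-exponent created by the tensor twist matches the $\frac{1}{2}\Lambda$-exponent hard-wired into the torus product once $v$ is traded for $q=v^2$, while the bilinear cross-terms $(\beta,\gamma)+\lr{\alpha,\delta}$ appear identically on both sides. The only point needing attention is the bookkeeping of these exponents in the $m$-dimensional lattice via $E'(\widetilde{Q})$; unlike the proof of Proposition~\ref{ts3}, no transport identities of the type in Lemma~\ref{xq} or Lemma~\ref{xq0} are required, since the relevant monomials already sit in $\mathbb{Z}^m$ through $\Dim$.
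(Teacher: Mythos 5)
Your proposal is correct and follows exactly the route the paper intends: the paper proves this lemma simply by citing the computation of Lemma~\ref{ts2}, and your argument is precisely that computation transported to the derived setting (untwisted homomorphism property of $\int$ from Proposition~\ref{jfys}, $\int u_{M\oplus P[1]}=X^{\mathbf{m}-\mathbf{p}}$, and matching the $v^{\Lambda}$ twist against the $q^{\frac{1}{2}\Lambda}$ exponent in the torus product). You also correctly note the simplification that no $K_\alpha$-bookkeeping and no identities like Lemma~\ref{xq} or Lemma~\ref{xq0} are needed here.
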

In a similar way to Proposition \ref{ts3}, we obtain the following
\begin{proposition}
The map $\mu: (\mathcal{T}_m\otimes\mathcal{T}_m,\star)\longrightarrow(\mathcal{T}_\Lambda,\star)$ defined by
$$\mu(X^{\alpha}\otimes X^{\beta})=v^{-(\alpha,\beta)-\lr{\alpha,\beta}}X^{-{E}(\widetilde{Q})\alpha-{E}'(\widetilde{Q})\beta},$$
where $\alpha,\beta\in\mathbb{Z}^m$, is a homomorphism of algebras.
\end{proposition}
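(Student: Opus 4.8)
The plan is to follow the proof of Proposition \ref{ts3} almost verbatim, the only difference being that the truncated $m\times n$ matrices $\widetilde{E},\widetilde{E}',\widetilde{B}$ of Section~5 are everywhere replaced by the full square matrices $E(\widetilde{Q}),E'(\widetilde{Q}),B(\widetilde{Q})$, that every index now lives uniformly in $\mathbb{Z}^m=K(\widetilde{\A})$, and that the compatibility relation $\Lambda(-\widetilde{B})={D_n\choose0}$ is replaced by $-\Lambda B(\widetilde{Q})=\operatorname{diag}\{d_1,\cdots,d_m\}$. Accordingly, I would first record the square-matrix analogues of Lemma \ref{eulerjs}: for all $\alpha,\beta\in\mathbb{Z}^m$,
$$\Lambda(E(\widetilde{Q})\alpha,B(\widetilde{Q})\beta)=-\lr{\beta,\alpha}\quad\text{and}\quad\Lambda(B(\widetilde{Q})\alpha,B(\widetilde{Q})\beta)=\lr{\beta,\alpha}-\lr{\alpha,\beta},$$
where $\lr{\cdot,\cdot}$ now denotes the Euler form of $\widetilde{\A}$ on $\mathbb{Z}^m$.

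To establish these, set $D_m:=\operatorname{diag}\{d_1,\cdots,d_m\}$; transposing $-\Lambda B(\widetilde{Q})=D_m$ and using the skew-symmetry of $\Lambda$ gives $B(\widetilde{Q})^{tr}\Lambda=D_m$. One also has the symmetrizability identity $D_mE'(\widetilde{Q})=E(\widetilde{Q})^{tr}D_m$, which holds entrywise because both $d_ir'_{ij}$ and $d_jr_{ji}$ equal $\dim_k\Ext_{\widetilde{\mathfrak{S}}}^1(S_i,S_j)$; combined with the fact that $D_mE(\widetilde{Q})$ represents the Euler form of $\widetilde{\A}$, the two identities drop out by the same matrix manipulation as in Section~5. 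The square-matrix version of Lemma \ref{xq0} is then a purely formal consequence of these two identities, of $E'(\widetilde{Q})=E(\widetilde{Q})+B(\widetilde{Q})$, and of the skew-symmetry of $\Lambda$, reproducing the computation in the proof of Lemma \ref{xq0}.

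With these in hand I would compute both sides on generators, writing $q=v^2$. Applying $\mu$ to $(X^\alpha\otimes X^\beta)\star(X^\gamma\otimes X^\delta)$ produces $v^{a_1}X^{-E(\widetilde{Q})(\alpha+\gamma)-E'(\widetilde{Q})(\beta+\delta)}$ with
$$a_1=\Lambda(E'(\widetilde{Q})(\alpha+\beta),E'(\widetilde{Q})(\gamma+\delta))+2(\beta,\gamma)+2\lr{\alpha,\delta}-(\alpha+\gamma,\beta+\delta)-\lr{\alpha+\gamma,\beta+\delta},$$
whereas $\mu(X^\alpha\otimes X^\beta)\star\mu(X^\gamma\otimes X^\delta)$ yields the same monomial with exponent
$$b_1=-(\alpha,\beta)-\lr{\alpha,\beta}-(\gamma,\delta)-\lr{\gamma,\delta}+\Lambda(E(\widetilde{Q})\alpha+E'(\widetilde{Q})\beta,E(\widetilde{Q})\gamma+E'(\widetilde{Q})\delta).$$
By the square-matrix Lemma \ref{xq0}, the difference of the two $\Lambda$-terms equals $\lr{\delta,\alpha}-\lr{\beta,\gamma}$; substituting this and expanding every symmetric form through $(x,y)=\lr{x,y}+\lr{y,x}$ collapses the remaining Euler-form bookkeeping to give $a_1=b_1$, which is exactly the asserted multiplicativity of $\mu$.

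I expect no genuine obstacle. Because the map $\mu$ in this setting carries no coefficient-embedding summands (there is nothing playing the role of $\widetilde{\alpha}_2,\widetilde{\beta}_2$ in Proposition \ref{ts3}), there is no need for any analogue of Lemma \ref{xq}, and the whole argument is shorter than that of Proposition \ref{ts3}. The one point deserving care is the passage to full square matrices: one must check that the symmetrizability $D_mE'(\widetilde{Q})=E(\widetilde{Q})^{tr}D_m$ and the relation $B(\widetilde{Q})^{tr}\Lambda=D_m$ hold in size $m\times m$ under the new compatibility hypothesis, after which the exponent comparison is the same routine bookkeeping as before.
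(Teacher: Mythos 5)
Your proposal is correct and takes essentially the same route the paper does: the paper's own proof of this proposition is just the remark that it goes ``in a similar way to Proposition \ref{ts3}'', and your argument is precisely that adaptation, with the truncated matrices replaced by $E(\widetilde{Q}),E'(\widetilde{Q}),B(\widetilde{Q})$, the full-size analogues of Lemmas \ref{eulerjs} and \ref{xq0} correctly derived from $B(\widetilde{Q})^{tr}\Lambda=\operatorname{diag}\{d_1,\cdots,d_m\}$ together with $D_mE'(\widetilde{Q})=E(\widetilde{Q})^{tr}D_m$, and the same exponent comparison $a_1=b_1$. Your observation that no analogue of Lemma \ref{xq} is needed (since $\mu$ here carries no coefficient components $\widetilde{\alpha}_2,\widetilde{\beta}_2$) is also exactly right.
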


Define $\psi:=\mu\circ(\int\otimes\int)\circ\Delta$. In other words, we have the following commutative diagram
\begin{equation}
\xymatrix{(\mathcal {D}\mathcal {H}_\Lambda^c(\widetilde{\A}),\star)\ar[r]^-{\psi}\ar[d]_-{\Delta}&(\mathcal{T}_\Lambda,\star)\\(\mathcal {D}\mathcal {H}_q^c(\widetilde{\A})\otimes\mathcal {D}\mathcal {H}_q^c(\widetilde{\A}),\star)\ar[r]^-{\int\otimes\int}&(\mathcal{T}_m\otimes\mathcal{T}_m,\star).\ar[u]_-{\mu}}
\end{equation}
In a similar way to Theorem \ref{main result}, we obtain the following
\begin{theorem}
The map $\psi:(\mathcal {D}\mathcal {H}_\Lambda^c(\widetilde{\A}),\star)\longrightarrow(\mathcal{T}_\Lambda,\star)$ is a homomorphism of algebras. Moreover, for any $M\in\widetilde{\A}$ and $P\in\P_{\widetilde{\A}}$,
$$\psi(u_{M\oplus P[1]})=\sum\limits_{\mathbf{e}}v^{\lr{\mathbf{p}-\mathbf{e},\mathbf{m}-\mathbf{e}}}|\mathrm{Gr}_{\mathbf{e}}M|
X^{{E}'(\widetilde{Q})(\mathbf{p}-\mathbf{e})-{E}(\widetilde{Q})(\mathbf{m}-\mathbf{e})}.$$
\end{theorem}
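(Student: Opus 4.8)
The plan is to follow the proof of Theorem \ref{main result} line by line, since $\psi=\mu\circ(\int\otimes\int)\circ\Delta$ is assembled from exactly the same three-step factorization, now carried out over the full category $\widetilde{\A}$. The homomorphism assertion requires no real work: the three results established immediately above (the derived analogues of Lemma \ref{ts1}, Lemma \ref{ts2} and Proposition \ref{ts3}) state precisely that $\Delta$, $\int\otimes\int$ and $\mu$ are each homomorphisms of algebras for the indicated twisted products $\star$, so their composite $\psi$ is one as well. Thus I would dispense with the first statement in a single sentence and devote the argument to the explicit formula.

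For the formula I would evaluate the three maps on $u_{M\oplus P[1]}$ in order. Applying the comultiplication (\ref{yucheng2}) gives $\Delta(u_{M\oplus P[1]})=\sum_{[U],[E]}q^{\lr{\hat{U},\hat{E}-\hat{P}}}F_{UE}^{M}\,u_{U}\otimes u_{E\oplus P[1]}$, where $U,E$ now range over $\widetilde{\A}$. Next, by (the derived analogue of) Proposition \ref{jfys} the integration map sends $u_{U}\mapsto X^{\mathbf{u}}$ and $u_{E\oplus P[1]}\mapsto X^{\mathbf{e}-\mathbf{p}}$, all dimension vectors being taken in $\mathbb{Z}^{m}$. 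Since $F_{UE}^{M}\neq 0$ forces $\hat{U}+\hat{E}=\hat{M}$, I may substitute $\mathbf{u}=\mathbf{m}-\mathbf{e}$, pull the scalar $q^{\lr{\mathbf{m}-\mathbf{e},\mathbf{e}-\mathbf{p}}}$ outside the sum over $[U]$, and use that $\sum_{[U]}F_{UE}^{M}$ counts the submodules of $M$ isomorphic to $E$; summing over all $[E]$ of fixed class $\mathbf{e}$ collects $|\mathrm{Gr}_{\mathbf{e}}M|$. This yields $(\int\otimes\int)\Delta(u_{M\oplus P[1]})=\sum_{\mathbf{e}}q^{\lr{\mathbf{m}-\mathbf{e},\mathbf{e}-\mathbf{p}}}|\mathrm{Gr}_{\mathbf{e}}M|\,X^{\mathbf{m}-\mathbf{e}}\otimes X^{\mathbf{e}-\mathbf{p}}$.

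Finally I would apply $\mu$, using $\mu(X^{\mathbf{m}-\mathbf{e}}\otimes X^{\mathbf{e}-\mathbf{p}})=v^{-(\mathbf{m}-\mathbf{e},\mathbf{e}-\mathbf{p})-\lr{\mathbf{m}-\mathbf{e},\mathbf{e}-\mathbf{p}}}X^{-E(\widetilde{Q})(\mathbf{m}-\mathbf{e})-E'(\widetilde{Q})(\mathbf{e}-\mathbf{p})}$, and merging this with the factor $q^{\lr{\mathbf{m}-\mathbf{e},\mathbf{e}-\mathbf{p}}}=v^{2\lr{\mathbf{m}-\mathbf{e},\mathbf{e}-\mathbf{p}}}$. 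The one genuine piece of bookkeeping is the $v$-exponent: invoking the relation $(\alpha,\beta)=\lr{\alpha,\beta}+\lr{\beta,\alpha}$, the accumulated power $2\lr{\mathbf{m}-\mathbf{e},\mathbf{e}-\mathbf{p}}-(\mathbf{m}-\mathbf{e},\mathbf{e}-\mathbf{p})-\lr{\mathbf{m}-\mathbf{e},\mathbf{e}-\mathbf{p}}$ collapses to $-\lr{\mathbf{e}-\mathbf{p},\mathbf{m}-\mathbf{e}}=\lr{\mathbf{p}-\mathbf{e},\mathbf{m}-\mathbf{e}}$, while the $X$-exponent rewrites as $E'(\widetilde{Q})(\mathbf{p}-\mathbf{e})-E(\widetilde{Q})(\mathbf{m}-\mathbf{e})$. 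This reproduces the asserted expression exactly.

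The computation is routine, and I do not expect any essential obstacle. The only points demanding care are that we now work over $\widetilde{\A}$, so $E(\widetilde{Q})$ and $E'(\widetilde{Q})$ are the full $m\times m$ matrices and the dimension vectors lie in $\mathbb{Z}^{m}$; and that the auxiliary facts transported into this setting hold in their $\widetilde{\A}$-forms, in particular the well-definedness of $\int$ as an algebra map, which rests on the vanishing of higher $\Hom$-spaces supplied by Lemma \ref{ceuler}. Granting these, the three-step factorization carries over verbatim and the exponent collapse above is the crux of the verification.
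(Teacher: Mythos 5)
Your proposal is correct and follows exactly the paper's intended argument: the paper proves this theorem by declaring it "in a similar way to Theorem \ref{main result}," and your three-step evaluation ($\Delta$, then $\int\otimes\int$, then $\mu$, with the substitution $\mathbf{u}=\mathbf{m}-\mathbf{e}$ from $F_{UE}^M\neq 0$, the collection of $\sum_{[U],[E]}F_{UE}^M$ into $|\mathrm{Gr}_{\mathbf{e}}M|$, and the exponent collapse $2\lr{\mathbf{m}-\mathbf{e},\mathbf{e}-\mathbf{p}}-(\mathbf{m}-\mathbf{e},\mathbf{e}-\mathbf{p})-\lr{\mathbf{m}-\mathbf{e},\mathbf{e}-\mathbf{p}}=\lr{\mathbf{p}-\mathbf{e},\mathbf{m}-\mathbf{e}}$) is precisely the computation in that proof, transplanted to $\widetilde{\A}$ with $\mathbb{Z}^m$-valued dimension vectors and the matrices $E(\widetilde{Q})$, $E'(\widetilde{Q})$.
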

\begin{corollary}\label{mr2t}
For any $M\in\widetilde{\A}$ and $P\in\P_{\widetilde{\A}}$,
$$\psi(u_{M\oplus P[1]})=\sum\limits_{\mathbf{e}}v^{\lr{\mathbf{p}-\mathbf{e},\mathbf{m}-\mathbf{e}}}|\mathrm{Gr}_{\mathbf{e}}M|
X^{-B(\widetilde{Q}){\bf e}-E(\widetilde{Q}){\bf m}+{\bf t_P}}.$$
\end{corollary}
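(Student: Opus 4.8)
The plan is to derive Corollary \ref{mr2t} directly from the theorem immediately preceding it, since the two claimed expressions for $\psi(u_{M\oplus P[1]})$ share the same summation over $\mathbf{e}$ and the same scalar coefficient $v^{\lr{\mathbf{p}-\mathbf{e},\mathbf{m}-\mathbf{e}}}|\mathrm{Gr}_{\mathbf{e}}M|$. Thus the entire content reduces to matching the exponents of $X$, i.e. to proving, for every $\mathbf{e}$,
\[
E'(\widetilde{Q})(\mathbf{p}-\mathbf{e})-E(\widetilde{Q})(\mathbf{m}-\mathbf{e})=-B(\widetilde{Q})\mathbf{e}-E(\widetilde{Q})\mathbf{m}+\mathbf{t}_P .
\]

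First I would invoke the defining relation $B(\widetilde{Q})=E'(\widetilde{Q})-E(\widetilde{Q})$, which is immediate from $E(\widetilde{Q})=I_m-R'(\widetilde{Q})$ and $E'(\widetilde{Q})=I_m-R(\widetilde{Q})$. Expanding the left-hand side and collecting the $\mathbf{e}$-terms gives
\[
E'(\widetilde{Q})(\mathbf{p}-\mathbf{e})-E(\widetilde{Q})(\mathbf{m}-\mathbf{e})=E'(\widetilde{Q})\mathbf{p}-E(\widetilde{Q})\mathbf{m}-B(\widetilde{Q})\mathbf{e},
\]
so that the identity above is equivalent to the single equation $E'(\widetilde{Q})\mathbf{p}=\mathbf{t}_P$, which is independent of $\mathbf{e}$ and of $M$. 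This is the only thing that needs proof.

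The heart of the argument is therefore to verify $E'(\widetilde{Q})\mathbf{p}=\mathbf{t}_P$ for every projective $\widetilde{\mathfrak{S}}$-module $P$. By additivity of both sides under direct sums it suffices to treat the indecomposable projectives $P_j$, for which $\mathbf{t}_{P_j}=\Dim(P_j/\rad P_j)=\Dim S_j=e_j$. I would then compute the Euler form $\lr{P_j,S_i}$ in two ways. On the one hand, since $P_j$ is projective and $\A$ is hereditary, $\lr{P_j,S_i}=\dim_k\Hom_{\widetilde{\mathfrak{S}}}(P_j,S_i)=\delta_{ij}d_j$. On the other hand, exactly as recorded in Section~$5$ for the principal species, the Euler form of the enlarged species $\widetilde{\mathfrak{S}}$ is represented under the standard basis by $E'(\widetilde{Q})^{tr}D_m$ with $D_m=\operatorname{diag}(d_1,\cdots,d_m)$, whence $\lr{P_j,S_i}=(E'(\widetilde{Q})\mathbf{p}_j)^{tr}D_me_i=d_i\,(E'(\widetilde{Q})\mathbf{p}_j)_i$. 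Comparing the two computations forces $(E'(\widetilde{Q})\mathbf{p}_j)_i=\delta_{ij}$, that is $E'(\widetilde{Q})\mathbf{p}_j=e_j=\mathbf{t}_{P_j}$, which is the desired equality.

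The only delicate point is the bookkeeping with the species conventions, in particular the diagonal valuation factors $d_i$ and the identification of $E'(\widetilde{Q})^{tr}D_m$ with the Euler matrix of $\widetilde{\mathfrak{S}}$; once these are tracked correctly the two-way computation of $\lr{P_j,S_i}$ closes immediately. I would stress that, in contrast to the analogous lemma of Section~$6$ (cf. Remark~\ref{zhuxishu}), no hypothesis on $\widetilde{Q}$ is required here: because $P$ is genuinely $\widetilde{\mathfrak{S}}$-projective and the full $m\times m$ matrix $E'(\widetilde{Q})$ is used, the equality $E'(\widetilde{Q})\mathbf{p}=\mathbf{t}_P$ holds unconditionally, and substituting it into the theorem yields the formula of the corollary.
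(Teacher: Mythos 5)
Your proposal is correct and follows essentially the same route as the paper: both reduce the corollary to the exponent identity via $B(\widetilde{Q})=E'(\widetilde{Q})-E(\widetilde{Q})$ and $E'(\widetilde{Q})\mathbf{p}=\mathbf{t}_P$, and then substitute into the preceding theorem. The only difference is that the paper merely asserts $E'(\widetilde{Q})\mathbf{p}=\mathbf{t}_P$ as a known fact, whereas you supply a correct verification of it through the two-way computation of $\lr{P_j,S_i}=\delta_{ij}d_j$ against the Euler matrix $E'(\widetilde{Q})^{tr}D_m$.
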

\begin{proof}
We only need to note that ${B}(\widetilde{Q})={E}'(\widetilde{Q})-{E}(\widetilde{Q})$ and ${E}'(\widetilde{Q})\mathbf{p}={\bf t}_P$, thus obtain that
$${E}'(\widetilde{Q})(\mathbf{p}-\mathbf{e})-{E}(\widetilde{Q})(\mathbf{m}-\mathbf{e})=-{B}(\widetilde{Q})\mathbf{e}-{E}(\widetilde{Q})\mathbf{m}+{\bf t}_P.$$\end{proof}
\begin{remark}
In Corollary \ref{mr2t}, we do not need to make the principal coefficient assumption as in Remark \ref{zhuxishu}. In fact, in Corollary \ref{mr2t} we use projectives $P$ in $\widetilde{\A}$ and ${E}'(\widetilde{Q})\mathbf{p}={\bf t}_P$ always holds (cf. \cite[Lemma 1]{Hubery1}), while in Lemma \ref{assumption} we use projectives $P$ in ${\A}$ and the matrix $\widetilde{E}'$ is of size $m\times n$.
\end{remark}

Let $\widetilde{C}_{\A}$ be the full subcategory of $\mathcal{D}^b(\widetilde{\mathcal{A}})$ consisting of objects $M\oplus P[1]$ with $M\in \mathcal{A}$ and $P\in \mathscr{P}_{\widetilde{\mathcal{A}}}$. Let  $\mathcal{DH}_\Lambda^{\tilde{c}}({\mathcal{A}})$ be the submodule of $\mathcal{DH}_\Lambda^c(\widetilde{\mathcal{A}})$ spanned by all elements $u_{X_\bullet}$ with $X_\bullet\in \widetilde{C}_{\A}$.
\begin{lemma}
The submodule $\mathcal{DH}_\Lambda^{\tilde{c}}({\mathcal{A}})$ is a subalgebra of $\mathcal{DH}_\Lambda^c(\widetilde{\mathcal{A}})$.
\end{lemma}
\begin{proof}
We only need to prove that the subcategory $\widetilde{C}_{\A}$ is closed under extensions. Let
\begin{equation}\label{zhengy2}
M[-1]\oplus P\rightarrow N\oplus Q[1]\rightarrow X_\bullet\rightarrow M\oplus P[1]\end{equation}
be any triangle in $D^b(\widetilde{\A})$ with $M,N\in\A$ and $P,Q\in\mathscr{P}_{\widetilde{\mathcal{A}}}$. By Lemma \ref{l:extension-closed}, we assume that $X_\bullet=X\oplus Y[1]$ with $X\in \widetilde{\A}$ and $Y\in\mathscr{P}_{\widetilde{\mathcal{A}}}$, and we need to prove that $X\in\A$.  By considering the long exact
sequence of homology groups induced by (\ref{zhengy2}), we have the following exact sequence
$$\xymatrix{N\ar[r]^-f& X\ar[r]& M\ar[r]& 0.}$$ Thus, we have a short exact sequence $0\rightarrow \im f\rightarrow X\rightarrow M\rightarrow 0.$ Since $\im f$ is a quotient of $N$ and $N\in\A$, we get that $\im f\in\A$. Hence, we obtain that $X\in\A$ since $M\in\A$.
\end{proof}

Let us define $\Psi=\psi\circ l$, where $l$ is the embedding of $\mathcal{DH}_\Lambda^{\tilde{c}}({\mathcal{A}})$ into $\mathcal{DH}_\Lambda^c(\widetilde{\mathcal{A}})$.
\begin{corollary}
The map $\Psi:(\mathcal {D}\mathcal {H}_\Lambda^{\tilde{c}}({\A}),\star)\longrightarrow(\mathcal{T}_\Lambda,\star)$ is a homomorphism of algebras. Moreover, for any $M\in{\A}$ and $P\in\P_{\widetilde{\A}}$,
$$\Psi(u_{M\oplus P[1]})=\sum\limits_{\mathbf{e}}v^{\lr{\mathbf{p}-\mathbf{e},\mathbf{m}-\mathbf{e}}}|\mathrm{Gr}_{\mathbf{e}}M|
X^{-\widetilde{B}{\bf e}-\widetilde{E}{\bf m}+{\bf t}_{{P}}}.$$ That is, $\Psi(u_{M\oplus {P}[1]})$ is precisely the quantum cluster character associated to $u_{M\oplus {P}[1]}$ for the quantum cluster algebra $\mathcal{A}(\Lambda, \widetilde{B})$.
\end{corollary}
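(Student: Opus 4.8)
The plan is to obtain both assertions as a direct specialization of Corollary~\ref{mr2t} to modules supported on the principal part $Q$, with essentially no new computation required. First, for the homomorphism property, I would write $\Psi=\psi\circ l$ and observe that it is a composite of two algebra homomorphisms. The inclusion $l\colon\mathcal{DH}_\Lambda^{\tilde{c}}(\A)\hookrightarrow\mathcal{DH}_\Lambda^c(\widetilde{\mathcal{A}})$ is an algebra homomorphism precisely because $\mathcal{DH}_\Lambda^{\tilde{c}}(\A)$ has been recorded above to be a subalgebra, and $\psi$ is a homomorphism of algebras by the theorem preceding Corollary~\ref{mr2t}. Hence $\Psi$ is a homomorphism of algebras.

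For the explicit formula I would start from the expression for $\psi(u_{M\oplus P[1]})$ supplied by Corollary~\ref{mr2t}, now read off for $M\in\A$ and $P\in\mathscr{P}_{\widetilde{\mathcal{A}}}$. The decisive observation is that, under the identification of $\A$ with the full subcategory of $\widetilde{\mathcal{A}}$ of modules supported on $Q$, the dimension vector $\mathbf{m}$ of $M$ is concentrated in its first $n$ coordinates; moreover every submodule of $M$ (viewed as an $\widetilde{\mathfrak{S}}$-module) again has support contained in $Q$, so every index $\mathbf{e}$ with $\mathrm{Gr}_{\mathbf{e}}M\neq\emptyset$ is likewise concentrated in the first $n$ coordinates. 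Since $\widetilde{B}$ (respectively $\widetilde{E}$) is exactly the block formed by the first $n$ columns of $B(\widetilde{Q})$ (respectively $E(\widetilde{Q})$), the products $B(\widetilde{Q})\mathbf{e}$ and $E(\widetilde{Q})\mathbf{m}$ depend only on those columns, whence $B(\widetilde{Q})\mathbf{e}=\widetilde{B}\mathbf{e}$ and $E(\widetilde{Q})\mathbf{m}=\widetilde{E}\mathbf{m}$. The coefficient $v^{\lr{\mathbf{p}-\mathbf{e},\mathbf{m}-\mathbf{e}}}$ is left unchanged, so substituting these two identities into the exponent $-B(\widetilde{Q})\mathbf{e}-E(\widetilde{Q})\mathbf{m}+\mathbf{t}_P$ of Corollary~\ref{mr2t} turns it into $-\widetilde{B}\mathbf{e}-\widetilde{E}\mathbf{m}+\mathbf{t}_P$, which is precisely the claimed formula. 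By the definition of the quantum cluster character recalled earlier, this is exactly the quantum cluster character associated to $u_{M\oplus P[1]}$ for $\mathcal{A}(\Lambda,\widetilde{B})$.

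I do not expect a serious obstacle here, since the statement is merely a restriction of Corollary~\ref{mr2t}; the only points that warrant care are the verification that every subobject of a $Q$-supported module stays supported on $Q$ (immediate, as the support of a subobject is contained in that of the ambient module) and the bookkeeping that $\widetilde{B}$ and $\widetilde{E}$ are the leading $n$-column blocks of $B(\widetilde{Q})$ and $E(\widetilde{Q})$.
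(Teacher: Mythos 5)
Your proposal is correct and takes essentially the same approach as the paper's own proof: $\Psi=\psi\circ l$ is a composite of algebra homomorphisms, and the displayed formula follows by specializing Corollary~\ref{mr2t} via the identities $B(\widetilde{Q})\mathbf{x}=\widetilde{B}\mathbf{x}$ and $E(\widetilde{Q})\mathbf{x}=\widetilde{E}\mathbf{x}$ for modules supported on $Q$, with your remark that submodules of $Q$-supported modules are again $Q$-supported making explicit why every contributing $\mathbf{e}$ qualifies. The only point the paper records that you compress into ``the coefficient is left unchanged'' is that the Euler forms of $\mathfrak{S}$-modules agree whether computed over $\mathfrak{S}$ or $\widetilde{\mathfrak{S}}$ (which identifies the coefficient $v^{\lr{\mathbf{p}-\mathbf{e},\mathbf{m}-\mathbf{e}}}$ with that of the quantum cluster character); this is immediate since $\A$ sits inside $\widetilde{\A}$ as a full extension-closed subcategory, so your argument is complete.
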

\begin{proof}
Clearly, $\Psi$ is a homomorphism of algebras. By Corollary \ref{mr2t}, we only need to note that for any $X\in\A$, we have that
$B(\widetilde{Q}){\bf x}=\widetilde{B}{\bf x}$ and $E(\widetilde{Q}){\bf x}=\widetilde{E}{\bf x}$. Moreover, for any two $\mathfrak{S}$-modules $X$ and $Y$, the Euler forms of $X$ and $Y$ as $\mathfrak{S}$-modules and $\widetilde{\mathfrak{S}}$-modules are equal.
\end{proof}

\section*{Acknowledgments}
The authors would like to thank the anonymous reviewers for the careful reading, helpful comments and suggestions.
This project was partially supported by the National Natural Science Foundation of China (No.s 11971326, 11821001, 11801273, 12271257).

\end{document}